\numberwithin{equation}{section} 
\newtheorem{introtheorem}{Theorem}
\newtheorem{introcorollary}[introtheorem]{Corollary}
\theoremstyle{plain}
\newtheorem{theorem}{Theorem}[section]
\newtheorem{proposition}[theorem]{Proposition}
\newtheorem{lemma}[theorem]{Lemma}
\newtheorem{corollary}[theorem]{Corollary}
\theoremstyle{definition}
\newtheorem{definition}[theorem]{Definition}
\newtheorem{remark}[theorem]{Remark}
\def\ol{\overline}
\def\C{\mathbb{C}}
\def\H{{\mathbb{H}}}
\def\R{\mathbb{R}}
\def\Z{\mathbb{Z}}
\def\D{\mathbb{D}}
\def\CP2{{\mathbb{CP}^2}}
\def\br{\mathbf{r}}
\def\Poly{{\operatorname{Poly}}}
\def\Polyd{{\operatorname{Poly}_d}}
\def\crit{\operatorname{Crit}}
\def\post{\operatorname{Post}}
\DeclareMathAlphabet{\pazocal}{OMS}{zplm}{m}{n}
\def\cA{{\pazocal{A}}}
\def\cB{{\pazocal{B}}}
\def\cC{{\pazocal{C}}}
\def\cD{{\pazocal{D}}}
\def\cF{{\pazocal{F}}}
\def\cJ{{\pazocal{J}}}
\def\cK{{\pazocal{K}}}
\def\cR{{\pazocal{R}}}
\def\cT{{\pazocal{T}}}
\def\cU{{\pazocal{U}}}
\def\cV{{\pazocal{V}}}
\def\field{\mathbb{C}_v}
\def\sk{\operatorname{sk}}
\def\aoneberk{\mathbb{A}^1_{an}}
\def\aone{\mathbb{A}^1}
\def\poneberk{\mathbb{P}^1_{an}}
\def\pone{\mathbb{P}^1}
\def\diam{\mathrm{diam}}
\def\dist{\mathrm{dist}_\H}
\newcommand{\iter}[2]{\ensuremath{#1^{\circ #2}}}
\begin{document}

\title{The basin of infinity of tame polynomials}
\author{Jan Kiwi and Hongming Nie}
\address{Facultad de Matem\'aticas, Pontificia Universidad Cat\'olica de Chile} 
\email{jkiwi@uc.cl}
\address{Institute for Mathematical Sciences, Stony Brook University}
  \email{hongming.nie@stonybrook.edu}
\date{\today}
\maketitle
\begin{abstract}
  Let $\C_v$ be a
  characteristic zero algebraically closed field  which is complete with respect to a non-Archimedean absolute value. We provide a necessary and sufficient condition for two tame polynomials in $\C_v[z]$ of degree $d \ge 2$ to be analytically conjugate on their basin of infinity. In the space of monic centered polynomials,
  tame polynomials with all their critical points in the basin of infinity
  form the tame shift locus. We show that a tame map $f\in\C_v[z]$ is in the closure of
  the tame shift locus if and only if  the Fatou set of $f$ coincides with the basin of infinity.
\end{abstract}

\tableofcontents
\section{Introduction}\label{sec:intro}

Let $\field$ be a characteristic $0$ algebraically closed field with residue characteristic $p\ge 0$
which is complete with respect to a non-Archimedean absolute value $|\cdot|$, that is assumed to be non-trivial. 
We regard nonconstant polynomials $f \in \field[z]$
as dynamical systems acting on the Berkovich analytic line $\aoneberk$ over $\field$.
The dynamics of a degree $d \ge 2$ polynomial $f$  partitions $\aoneberk$ into two sets: the \emph{basin of infinity} $\cB(f)$
consisting of all
$x \in \aoneberk$ with unbounded orbit and, its complement, the \emph{filled Julia set} $\cK(f)$, which is compact. 
The theme of this paper is to investigate when the actions of two polynomials  on their basins of infinity are analytically conjugate. However, in this introduction we start discussing some consequences of our investigation and finish
describing directly related results.


It is well known that the dynamics of a polynomial $f$ is strongly influenced by the dynamical behavior of points $x$ where $f$ fails to be locally injective.
The set of such points  $x \in \aoneberk$  is called 
the \emph{ramification locus} $\cR(f) \subset \aoneberk$ (c.f.~\cite{Faber13II}).
We will focus on the more tractable class of \emph{tame polynomials}
consisting on maps $f$ for which $\cR(f)$ is a locally finite tree (c.f.~\cite{Trucco14}).
We say that a tame polynomial lies in the \emph{tame shift locus} if $\cR(f)$ is contained
in the basin of infinity, equivalently, if the critical points of $f$ are
contained in the basin of infinity. In this case, the unique Fatou component
of $f$ is the basin of infinity and the Julia set
$\cJ(f) := \partial \cK(f)$ is formed by type I points. 
That is, $\cJ(f)$ is contained in the (classical) affine line $\aone$.
Moreover, 
$f: \cJ(f) \to \cJ(f)$ is topologically conjugate to the one-sided shift on $d$ symbols, (c.f.~\cite[Theorem 3.1]{Kiwi06}).

In parameter space, for simplicity,
we work in the space $\Poly_d$ of \emph{monic and centered  polynomials  of degree $d$} which, via coefficients, is naturally identified with $\field^{d-1}$.
Unless otherwise stated,  we work with the metric topology in $\Poly_d$
induced by the (sup) norm on $\field^{d-1}$.
%
%
%
 Our investigation regarding the basin of infinity will allow us to characterize
the location in parameter space of tame polynomials whose basin of infinity agree with the Fatou set or, equivalently, the Julia set is contained in the classical affine line $\aone$.


\begin{introtheorem}
  \label{ithr:A}
  Consider a tame  polynomial $f \in \Poly_d$.
  Then  $\cJ (f) \subset \aone$ if and only if $f$ is in the closure of the tame shift locus. 
\end{introtheorem}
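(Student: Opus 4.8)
The plan is to prove the two implications separately. Throughout I will use several reformulations of the condition $\cJ(f)\subset\aone$ that I expect are available from the earlier sections (and are standard for tame maps over $\field$): for tame $f\in\Poly_d$, the statements ``$\cJ(f)\subset\aone$'', ``the Fatou set of $f$ equals $\cB(f)$'', ``$\cK(f)$ has empty interior'', ``$\cK(f)$ is a totally disconnected compact subset of $\aone$'', and ``$\cJ(f)$ contains no type~II periodic point whose reduction over $\rfield$ has degree $\ge2$'' are all equivalent. The only non-formal ingredients are that a bounded Fatou component, via the classification of periodic Fatou components together with tameness, forces a repelling type~II cycle into $\cJ(f)$, and conversely that repelling cycles are dense in $\cJ(f)$ and accumulate on every point of $\cJ(f)\cap\H$. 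I also use the fact recalled in the introduction that every map in the tame shift locus has $\cJ(f)$ of type~I only.

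\emph{Closure of the tame shift locus $\Rightarrow$ $\cJ(f)\subset\aone$.} I argue the contrapositive. Assume $\cJ(f)\not\subset\aone$, so by the above there is a periodic point $\xi\in\H$, of some period $n$, with reduction $\overline{f^{\circ n}}$ of degree $m\ge2$. Since being in the closure of the shift locus is conjugacy invariant, after an affine change of coordinate we may take $\xi$ to be the Gauss point; then $f^{\circ n}$ has all coefficients of absolute value $\le1$, with the coefficient of $z^m$ a unit. This is an open condition on $f$: for $g$ in a small neighborhood of $f$, the coefficients of $g^{\circ n}$ differ from those of $f^{\circ n}$ by less than $1$, hence have the same reductions, so $g^{\circ n}$ again fixes the Gauss point with reduction of degree $m\ge2$. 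Thus $g$ has a repelling type~II cycle, which lies in $\cJ(g)$, so $\cJ(g)\not\subset\aone$ and $g$ is not in the shift locus. Hence a whole neighborhood of $f$ avoids the shift locus, i.e.\ $f$ is not in its closure.

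\emph{$\cJ(f)\subset\aone$ $\Rightarrow$ $f$ in the closure of the tame shift locus.} We may assume $f$ is not already in the shift locus, so some critical points $c\in\crit(f)$ lie in $\cK(f)=\cJ(f)\subset\aone$; these are classical points, with critical values $v=f(c)\in\partial\cB(f)$. The idea is to move the critical values: using the openness of the map sending a polynomial in $\Poly_d$ to its multiset of critical values, produce $f_\varepsilon\to f$ in $\Poly_d$ whose critical values are prescribed points, those associated with the non-escaping $c$ being chosen inside $\cB(f)$ and those associated with the already-escaping critical points kept near their original (escaping) values. Because ``this point belongs to $\cB(\cdot)$'' is an open condition in the pair (map, point), for $\varepsilon$ small every critical value of $f_\varepsilon$ lies in $\cB(f_\varepsilon)$, hence so does every critical point, and $f_\varepsilon$ lies in the shift locus; in residue characteristic $0$ it is automatically tame, and in positive residue characteristic one instead carries out this perturbation within the tame locus.

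\emph{The main obstacle.} The delicate point in the previous paragraph is that the prescribed critical values lie near $\cJ(f)$, so their escape times under $f$ are large, while $\|f_\varepsilon-f\|$ is comparable to the distance from $v$ at which these values are placed; the naive balance between escape time and perturbation size is only borderline. Making this rigorous is where the paper's structural description of $\cB(f)$ near $\cJ(f)$ is needed: one uses a Markov partition of a neighborhood of $\cK(f)$ by disjoint balls together with the observation that near a non-escaping critical point the map is folding (local degree $\ge2$) rather than expanding, so that a perturbation placing the critical value outside a sufficiently deep level of the partition is small relative to the escape time it produces. Equivalently, and more in keeping with the methods of the paper, one realizes the combinatorial model of $\cB(f)$ as a limit of the models of shift-locus maps and then appeals to the realization and rigidity results for basins of infinity to obtain honest polynomials $f_\varepsilon\to f$ in the shift locus. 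This constructive direction carries the real content of the theorem; the openness argument in the converse is comparatively soft.
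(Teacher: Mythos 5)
Your plan splits into the same two implications as the paper, but in both directions you use substantively different — and in one direction incomplete — arguments.

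\textbf{The direction ``closure of tame shift locus $\Rightarrow \cJ(f)\subset\aone$.''} You reduce to the existence of a type~II repelling periodic point whenever $\cJ(f)\cap\H\neq\emptyset$, and then run an openness argument. That reduction is not free: the paper never establishes (nor needs) that $\cJ(f)\cap\H\neq\emptyset$ forces a type~II repelling cycle, and you neither prove it nor give a reference. For tame polynomials this may ultimately hold via the classification of periodic Fatou components, but it is a genuine additional ingredient and it is where your argument would have to do its real work. The paper's route is both more direct and more robust: it invokes Trucco's Proposition~7.1, which says that for $x_0\in\cJ(f)\cap\H$ the $\omega$-limit set is trapped in a hyperbolic ball of radius $d^{d-1}\dist(x_0,x_f)$ about $x_f$. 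Any $g$ agreeing with $f$ on that ball then has $\omega(x_0)\subset\cK(g)$, so a full neighborhood of $f$ misses the shift locus. No periodic points are required.

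\textbf{The direction ``$\cJ(f)\subset\aone \Rightarrow$ closure of tame shift locus.''} Here your proposal does not constitute a proof. You correctly diagnose why the naive perturbation of critical values fails — the escape time of a Julia critical value is unbounded, while the size of the perturbation needed to place it at a given B\"ottcher level decays only at the same rate — and then you gesture at ``Markov partitions'' or ``realization and rigidity results for basins of infinity'' without making either work. The paper resolves this by an entirely different global mechanism. It first constructs (Lemma~\ref{l:analytic}, a Tate-algebra/normalization argument) a nonconstant analytic family $\{f_\lambda\}$ through $f=f_0$ in which the base point and the B\"ottcher coordinates of the already-escaping critical points are held constant. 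Theorem~\ref{ithr:C} (passivity plus constant B\"ottcher coordinates plus $\cJ\subset\aone$ implies the family is constant, via analytic removability of compact subsets of $\aone$) then forces at least one Julia critical point to be active in this family. Corollary~\ref{coro:passive} (which itself rests on Lemma~\ref{lem:small-disk} and the extendable-conjugacy machinery of Propositions~\ref{prop:close} and \ref{prop:p-to-a}) produces a parameter $\lambda'$ at which all critical points lie in $\cB(\lambda')\cup\cJ(\lambda')$, the active ones now escape, and the passive Julia critical points remain Julia; in particular $f_{\lambda'}$ has strictly fewer Julia critical points and still has $\cJ\subset\aone$. Induction on the count of Julia critical points then finishes. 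None of this structure appears in your sketch, and in particular the pivotal removability/rigidity step — the actual reason one active critical point must exist — is absent. As it stands, the hard direction of your proposal is a description of the difficulty rather than a proof.
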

Note that each
polynomial in the tame shift locus is expanding on its Julia set (c.f~\cite{Benedetto01,Lee19}). Hence we show that tame polynomials with $\cJ (f) \subset \aone$
can be perturbed to maps which, in a certain sense, have hyperbolic (expanding)
dynamics over their Julia sets.

When a tame polynomial $f$ is in the closure of the tame shift locus, it is not difficult to deduce from Trucco's work~\cite{Trucco14} that $\cJ (f) \subset \aone$. Thus, our contribution is to show that Julia critical points (if any) of
a tame polynomial $f$ with $\cJ (f) \subset \aone$ simultaneaously
become escaping under an appropriate arbitrarily small perturbation of $f$.
We say that a Julia critical point $c \in \cJ(f)$ is \emph{active at $f$} 
if there exists a polynomial $g$ arbitrarily close to $f$ with a critical point $c'$ close to $c$ such that $c'$ lies
in the basin of infinity under iterations of $g$.
%
(We must warn the reader that our definition of active/passive
critical points differs from the one by Irokawa~\cite{Irokawa19}.)

\begin{introcorollary}
  \label{icor:B}
  Assume that $f \in \Poly_d$ is a 
  tame polynomial such that $\cJ(f) \subset \aone$.
  If a critical point $c$ lies in the Julia set $\cJ(f)$,
  then $c$ is active at $f$.
\end{introcorollary}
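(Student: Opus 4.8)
The plan is to obtain Corollary~\ref{icor:B} as an essentially immediate consequence of Theorem~\ref{ithr:A}, the only additional ingredient being the continuity of critical points as a function of the polynomial. Since $f$ is tame and $\cJ(f)\subset\aone$, Theorem~\ref{ithr:A} provides a sequence of polynomials $g_n\to f$ in the metric topology of $\Poly_d$ with each $g_n$ in the tame shift locus. By the very definition of the tame shift locus, every critical point of $g_n$ lies in the basin of infinity $\cB(g_n)$; equivalently, every critical point of $g_n$ has unbounded orbit under iteration of $g_n$.

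The second step I would carry out is to transfer the perturbed critical points back near $c$. Over the complete non-Archimedean field $\field$ the roots (with multiplicity) of a polynomial depend continuously on its coefficients; applying this to the derivatives and using that $g_n'\to f'$ coefficientwise, the multiset of critical points of $g_n$ converges to the multiset of critical points of $f$. Hence, writing $m\ge 1$ for the multiplicity of $c$ as a root of $f'$, for every $\varepsilon>0$ and all large $n$ there are $m$ critical points of $g_n$ (counted with multiplicity) within distance $\varepsilon$ of $c$; fix one of them and call it $c_n$, so that $c_n\to c$.

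Finally, since $g_n$ lies in the tame shift locus, its critical point $c_n$ belongs to $\cB(g_n)$, i.e.\ $c_n$ escapes to infinity under iteration of $g_n$. Taking $g=g_n$ and $c'=c_n$ for $n$ large therefore verifies, directly from the definition, that the Julia critical point $c$ is active at $f$, which is the assertion of the corollary.

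I do not expect a genuine obstacle once Theorem~\ref{ithr:A} is available: the entire substance of the result is contained in that theorem, and the present deduction only needs the non-Archimedean continuity of the roots of $g_n'$ together with a small amount of bookkeeping about the multiplicity of $c$ as a critical point. (Note that the hypothesis $c\in\cJ(f)$ is used only as the setting in which ``active'' is defined; if $c$ were already in $\cB(f)$ one could simply take $g=f$.) If I wanted to make the write-up self-contained I might instead state and prove the continuity-of-critical-points lemma explicitly, but this is routine and does not affect the structure of the argument.
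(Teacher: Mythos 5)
Your proof is correct, and it takes a genuinely different and noticeably more direct route than the paper. The paper argues by contradiction: it assumes $c$ is passive (not active), invokes Corollary~\ref{coro:passive} (whose part (3) uses the rigidity machinery from \S~\ref{sec:existence} to show that a passive Julia critical point stays in the Julia set throughout a suitable parameter disk), and then observes this is incompatible with Theorem~\ref{ithr:A}. You instead treat Corollary~\ref{icor:B} as a soft consequence of Theorem~\ref{ithr:A} alone: take $g_n$ in the tame shift locus with $g_n \to f$, use that over a complete non-Archimedean field the roots of $g_n'$ vary continuously with the coefficients (e.g.\ via Newton polygons or Hensel's lemma) to produce critical points $c_n \to c$, and conclude since $c_n \in \cB(g_n)$ by the definition of the tame shift locus. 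This makes explicit that the entire content is Theorem~\ref{ithr:A}, needing nothing beyond the elementary continuity of the critical locus, whereas the paper's argument re-enters the perturbation machinery (Corollary~\ref{coro:passive}) that was developed to prove Theorem~\ref{ithr:A} in the first place. Both are valid; yours has the virtue of logical economy and of matching the definition of ``active'' most directly, while the paper's phrasing by contradiction fits more naturally into the narrative of \S~\ref{sec:perturbation} where passivity of families is the operative notion. One small thing worth stating explicitly in a write-up is the continuity-of-roots lemma you invoke, including that the convergence is with multiplicity, so that a root of $f'$ of multiplicity $m$ is approximated by $m$ roots of $g_n'$; you flag this, and it is indeed routine.
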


Without any assumption on $f \in \Poly_d$,
we conjecture that if a critical point $c$ lies in 
$\cJ(f)$, then $c$ is active at $f$.
That is, we conjecture that bifurcations must occur at maps with Julia critical points. On the other hand,
$J$-stability and ``hyperbolicity'' results for maps whose
Julia set is critical point free have been obtained by  Benedetto and Lee~\cite{Benedetto01,Lee19, Benedetto22}, and T. Silverman~\cite{Silverman17,Silverman19}.
Although useful, analogies with open problems and results in complex dynamics should be taken with extreme caution. In fact, the hypothesis that $c$ is a Julia critical point is \emph{stronger}
in the non-Archimedean context than in the complex setting since it immediately implies a control on the geometry around $c$ (i.e. there exists a sequence of nested Fatou annuli around $c$ with divergent sum of moduli). 
So although 
the ``analogue'' conjecture in complex dynamics has proven to be hard and elusive, even for quadratic polynomials,
it might be the case that establishing activity for
Julia critical points is more accessible
in the non-Archimedean setting.



\medskip
To study perturbations of a given polynomial it is convenient to consider analytic families of critically marked monic and centered polynomials 
$\{(f_\lambda, c_1(\lambda), \dots, c_{d-1}(\lambda)) \}$ parametrized by a disk $\Lambda \subset \field$ (see \S~\ref{sec:analytic-family}). In contrast with T. Silverman's work, for our purpose, we
only consider parameters $\lambda \in \field$
and polynomials with coefficients in $\field$, that is we do not work with
(non-classical)  parameters in the Berkovich disk associated to $\Lambda$
(c.f.~\cite{Silverman17,Silverman19}). 
A critical point $c_i(\lambda)$ is called \emph{passive in $\Lambda$} if either $c_i(\lambda) \in {\cB(\lambda)}:=\cB(f_\lambda)$ for all $\lambda \in \Lambda$ or $c_i(\lambda) \notin \cB(\lambda)$ for all $\lambda \in \Lambda$.
Analytic conjugacies between actions on basins of infinity must respect
critical orbits and agree, up to normalization, with a conjugacy
furnished near infinity by the B\"ottcher coordinates.
Thus, it is convenient to prescribe the locations of the critical points in
$\cB(\lambda)$  with the aid of the
B\"ottcher coordinate $\phi_\lambda$ which conjugates $f_\lambda$ with $z \mapsto z^d$ in a neighborhood of $\infty$ (see \S~\ref{sec:poly}).
We say that the \emph{B\"ottcher coordinate of a critical point $c_i(\lambda) \in \cB(\lambda)$
 is constant in $\Lambda$} if $\phi_\lambda(\iter{f_\lambda}{n}(c_i(\lambda)))$ is a constant function of $\lambda \in \Lambda$ for some $n$ sufficiently large.
Along analytic families with passive critical points having constant B\"ottcher coordinates, the analytic dynamics in the basin of infinity is constant:

\begin{introtheorem}
  \label{ithr:C}
  Consider an analytic family  $\{(f_\lambda, c_1(\lambda), \dots, c_{d-1}(\lambda)) \}$ of critically marked tame monic and centered polynomials 
  parametrized by an open disk $ \Lambda \subset \field$.
  Assume that all critical points are passive and that the B\"ottcher coordinates of  escaping critical points are constant in $\Lambda$. 
 Then for all $\lambda_1, \lambda_2 \in \Lambda$,
the maps  $f_{\lambda_1}:\cB(\lambda_1) \to \cB(\lambda_1)$ and $f_{\lambda_2}:\cB(\lambda_2) \to \cB(\lambda_2)$ are analytically conjugate.
  Moreover, if in addition, $\cJ(f_{\lambda_0}) \subset \aone$ for some $\lambda_0\in\Lambda$, then $f_\lambda = f_{\lambda_0}$ for all $\lambda \in \Lambda$.
\end{introtheorem}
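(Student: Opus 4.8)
The plan is to treat the two assertions separately. For the analytic conjugacy, I would transport the B\"ottcher coordinate inward, one dynamical scale at a time. Let $G_\lambda\colon\cB(\lambda)\to(0,\infty)$ denote the escape-rate function, characterised by $G_\lambda\circ f_\lambda=d\,G_\lambda$ and $G_\lambda(z)=\log|z|+o(1)$ near $\infty$, and recall that the B\"ottcher coordinate $\phi_\lambda$ is the analytic isomorphism of the disk $\{G_\lambda>M_\lambda\}$ (a neighbourhood of $\infty$) onto $\{|w|>e^{M_\lambda}\}\cup\{\infty\}$ conjugating $f_\lambda$ to $w\mapsto w^d$, where $M_\lambda:=\max\{G_\lambda(c_i(\lambda))\colon c_i\ \text{escaping}\}$ (and $M_\lambda=0$ if no critical point escapes). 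Since the B\"ottcher coordinate of each escaping critical point is constant in $\Lambda$, the numbers $G_\lambda(c_i(\lambda))=d^{-n}\log|\phi_\lambda(\iter{f_\lambda}{n}(c_i(\lambda)))|$ are constant, so $M_\lambda\equiv M$; and by passivity the set of escaping indices does not depend on $\lambda$. Fix $\lambda_1,\lambda_2\in\Lambda$. On $\{G_{\lambda_1}>M\}$ put $h:=\phi_{\lambda_2}^{-1}\circ\phi_{\lambda_1}$, an analytic isomorphism onto $\{G_{\lambda_2}>M\}$ intertwining $f_{\lambda_1}$ and $f_{\lambda_2}$. Using $\cB(\lambda_i)=\bigcup_{k\ge0}\{G_{\lambda_i}>M/d^{k}\}$ and $f_{\lambda_i}^{-1}(\{G_{\lambda_i}>M/d^{k}\})=\{G_{\lambda_i}>M/d^{k+1}\}$, extend $h$ by induction on $k$, lifting it across one scale at a time by pulling back along $f_{\lambda_2}$.

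The step I expect to be the main obstacle is precisely that this lifting is unobstructed, i.e.\ that the combinatorics of the two basins match scale by scale. A connected component $V$ of $\{G_{\lambda_1}>M/d^{k+1}\}$ is a Berkovich disk, mapped by $f_{\lambda_1}$ onto a component $V'$ of $\{G_{\lambda_1}>M/d^{k}\}$ with degree one plus the number (with multiplicity) of critical points of $f_{\lambda_1}$ inside $V$ — all of which are escaping. To lift $h|_{V'}$ one needs a component of $f_{\lambda_2}^{-1}(h(V'))$ at level $M/d^{k+1}$ carrying the matching ramification. Tameness enters here (local finiteness of the ramification locus: each level set has only finitely many components and finitely many critical points, organised by a locally finite tree), together with the fact that the position of every escaping critical point within this combinatorial picture — its itinerary in the B\"ottcher coding of $\cB(\lambda)$ — is determined by the constant data $\phi_\lambda(\iter{f_\lambda}{n}(c_i(\lambda)))$ and so is the same for $\lambda_1$ and $\lambda_2$. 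Granting this, components and ramification correspond under $h$, the lift is unique up to a $(d-1)$-st root of unity chosen at the top scale (fix it to be $1$), and the induction yields the analytic conjugacy $h\colon\cB(\lambda_1)\to\cB(\lambda_2)$. (This amounts to reproving, for passive analytic families, one direction of the paper's criterion for analytic conjugacy on basins of infinity.)

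For the ``moreover'' part, assume $\cJ(f_{\lambda_0})\subset\aone$. I would first show that $\cJ(f_\lambda)\subset\aone$ for every $\lambda\in\Lambda$. This condition is equivalent to $\cK(f_\lambda)$ having empty interior — equivalently, by Theorem~\ref{ithr:A}, to $f_\lambda$ lying in the closure of the tame shift locus, i.e.\ to the Fatou set of $f_\lambda$ being $\cB(\lambda)$ — and the absence of interior of $\cK(f_\lambda)$ can be read off the analytic structure of $\cB(\lambda)$ alone: it says that no end of $\cB(\lambda)$ admits a neighbourhood basis consisting of Berkovich disks or open annuli, every neighbourhood of an end being instead a Berkovich disk with a compact totally disconnected set of type~I points removed. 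Since the first part gives an analytic isomorphism $\cB(\lambda_0)\cong\cB(\lambda)$, this property transfers, so $\cJ(f_\lambda)\subset\aone$, and $\cK(f_\lambda)$ is a compact totally disconnected subset of $\aone$, whence $\cB(\lambda)$ is dense in $\aoneberk$.

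Finally I would upgrade the conjugacy. Since $G_\lambda\circ h_\lambda=G_{\lambda_0}$, as $x\to\zeta\in\cK(\lambda_0)$ we get $G_\lambda(h_\lambda(x))\to0$, and because $\cJ(f_\lambda)\subset\aone$ the image $h_\lambda(x)$ converges to a point of $\cK(\lambda)$; so $h_\lambda$ and its inverse extend to mutually inverse continuous maps $\aoneberk\to\aoneberk$ fixing $\infty$, analytic off the compact totally disconnected set $\cK(\lambda_0)\subset\aone$, hence analytic by a removable-singularity argument. An analytic automorphism of $\aoneberk$ fixing $\infty$ is affine, and conjugating the monic centred polynomial $f_{\lambda_0}$ to another monic centred polynomial forces it to be $z\mapsto\alpha(\lambda)z$ with $\alpha(\lambda)^{d-1}=1$. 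Hence $f_\lambda=\alpha(\lambda)\,f_{\lambda_0}(z/\alpha(\lambda))$ lies, for each $\lambda$, in the finite set $\{\alpha f_{\lambda_0}(z/\alpha):\alpha^{d-1}=1\}$; since $\lambda\mapsto f_\lambda$ is analytic, each coefficient is an analytic function on the disk $\Lambda$ taking finitely many values, hence constant, and evaluating at $\lambda_0$ gives $f_\lambda=f_{\lambda_0}$ for all $\lambda\in\Lambda$. The two places I expect to require real care are the scale-by-scale combinatorial matching in the first part, and the intrinsic characterisation of ``$\cJ(f)\subset\aone$'' through the ends of $\cB(f)$ that propagates it across the family.
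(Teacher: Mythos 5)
Your plan for the first assertion — transporting the B\"ottcher coordinate inward one escape-rate scale at a time — is recognisably the same mechanism the paper uses, but you run the two stages the paper keeps separate (Proposition~\ref{prop:p-to-a}, which builds a tree-level \emph{extendable} conjugacy, and Theorem~\ref{ithr:D}/Proposition~\ref{p:ithr-D-hard}, which upgrades it to an analytic one) as a single induction directly on the Berkovich components. That is a legitimate reorganisation, and the ``moreover'' part of your argument (removability of $\cJ(\lambda_0)$, affineness of the extension, finiteness of the affine conjugacy class together with analyticity of $\lambda\mapsto f_\lambda$) is essentially the paper's argument; the detour through propagating $\cJ(f_\lambda)\subset\aone$ to all $\lambda$ is unnecessary but harmless.

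The genuine gap is exactly at the place you flag and then wave past with ``granting this.'' You claim that the position of an escaping critical point within the branching picture of $\cB(\lambda)$ — which component of $f_\lambda^{-1}(h(V'))$ should receive $V$, and with what ramification — is ``determined by the constant data $\phi_\lambda(\iter{f_\lambda}{n}(c_i(\lambda)))$.'' That is not true as a statement about a single pair $f_{\lambda_1},f_{\lambda_2}$: the B\"ottcher coordinate only records positions in the region $\{G>M\}$, and once a critical orbit dips below level $M$ the tangent maps $T_x f_\lambda$ at type~II branch points are not injective, so the B\"ottcher data of the critical \emph{values} does not by itself pin down the direction the critical \emph{points} sit in one level lower. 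What actually forces the combinatorics not to jump is the connecting analytic family: the paper's ``key observation'' \eqref{eq:direction} in the proof of Proposition~\ref{prop:p-to-a} shows that if $\alpha(\lambda_0),\beta(\lambda_0)$ share a bounded direction at a level-$j$ vertex, then the analytic function $\alpha(\lambda)-\beta(\lambda)$ is bounded by the diameter of that vertex on all of $\Lambda$ (by the inductive hypothesis at level $j$) and, being non-vanishing by passivity, must by the non-Archimedean Maximum Principle stay \emph{strictly} inside; this is what keeps components and ramification in correspondence as you descend a level. Your induction will close only if you insert this maximum-principle step; as written, you treat $\lambda_1$ and $\lambda_2$ symmetrically and never use that they are joined by the disk $\Lambda$, and the itinerary-matching you need does not follow from the B\"ottcher and tameness hypotheses alone.
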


The moreover part of the statement above
is a  consequence of
the  fact from analytic geometry that
compact subsets of $\field$ are analytically removable
(see Theorem~\ref{thr:removability} or \cite[Proposition~2.7.13]{Fresnel04}).\footnote{
This fact is 
analogous to the removability of
absolute measure $0$ subsets of the complex plane, e.g. see~\cite{McMullen94}.}
In complex polynomial dynamics DeMarco and
Pilgrim~\cite{DeMarco11} studied
the map which assigns to each element of moduli space its dynamics
on the basin of infinity (modulo analytic conjugacy). In this language, removability implies that, for maps in the closure of the tame shift locus,
the analytic dynamics on their basins of infinity determines the map uniquely (modulo affine conjugacy).
For a general $f_0 \in \Poly_d$ it would be interesting to describe the locus of maps $f \in \Poly_d$ whose action on $\cB(f)$ is analytically conjugate to
$\cB(f_0)$ (c.f.~\cite{DeMarco11}). 

\medskip
Given a tame polynomial $f$, we define the \emph{dynamical core} of $f$
as the smallest forward invariant
set $\cA_f$ containing the non-classical ramification
points in $\cB(f)$.
It is not difficult to show that
$\cA_f$ is a locally finite tree.
Then, given any pair of tame polynomials $f$ and $g$ in $\Poly_d$
we introduce  the notion of
an \emph{extendable conjugacy} $h: \cA_f \to \cA_g$.
Loosely speaking, an extendable conjugacy is a diameter preserving isometric conjugacy
that is locally a translation and agrees with a B\"ottcher coordinate change near infinity.
 For a precise definition see
 \S~\ref{sec:conjugacy}.
 Analytic geometry will imply that, modulo a root of unity, an
 analytic conjugacy
 $\varphi: \cB(f) \to \cB(g)$ restricts to an 
 extendable conjugacy $h: \cA_f \to \cA_g$ and, conversely, 
via an analytic continuation argument we prove that 
extendable conjugacies upgrade to analytic conjugacies:


\begin{introtheorem}
  \label{ithr:D}
  Let $f, g \in \Poly_d$ be tame polynomials.
  There exists an extendable conjugacy  between  $f: \cA_f \to \cA_f$ and $g: \cA_g \to \cA_g$ if and only if there exists an  analytic conjugacy 
  between $f: \cB (f) \to \cB(f)$ and  $g: \cB (g) \to \cB(g)$
  which 
  extends the B\"ottcher coordinate change.
\end{introtheorem}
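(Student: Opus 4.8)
\emph{Strategy.} We prove the two implications separately. The forward one is formal: an analytic isomorphism preserves everything that the definition of an extendable conjugacy records. The backward one is the substance of the theorem and is obtained by propagating $h$ outward into $\cB(f)$ along the dynamics, an analytic continuation argument that exploits that $f$ and $g$ are tame.

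\emph{Analytic conjugacy $\Rightarrow$ extendable conjugacy.} Suppose $\varphi\colon\cB(f)\to\cB(g)$ is an analytic conjugacy extending the B\"ottcher coordinate change. As an analytic isomorphism, $\varphi$ preserves the type of Berkovich points, the path metric, and local degrees, and---ramification being a conjugacy invariant---it maps $\cR(f)\cap\cB(f)$ onto $\cR(g)\cap\cB(g)$. Since $\varphi$ also intertwines $f$ with $g$, it carries the smallest forward invariant set containing the non-classical ramification points of $\cB(f)$ onto the corresponding set for $g$; hence $\varphi(\cA_f)=\cA_g$ and $h:=\varphi|_{\cA_f}$ conjugates $f|_{\cA_f}$ with $g|_{\cA_g}$. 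Near $\infty$, $h$ agrees with the B\"ottcher coordinate change by hypothesis, and for monic polynomials this change is tangent to the identity, hence near $\infty$ it is a local translation that preserves diameters and is isometric. To propagate these three properties inward, pick for $x\in\cA_f$ an integer $n$ with $\iter{f}{n}(x)$ in the B\"ottcher neighborhood $U_\infty$ and write, near $x$, $\varphi=(\text{a branch of }\iter{g}{-n})\circ(\varphi|_{U_\infty})\circ\iter{f}{n}$; since $\varphi$ preserves local degrees, the scalings produced by $\iter{f}{n}$ and by the selected branch of $\iter{g}{-n}$ cancel, so $\varphi$, and hence $h$, is diameter preserving, isometric, and locally a translation on all of $\cA_f$. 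Thus $h$ is an extendable conjugacy.

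\emph{Extendable conjugacy $\Rightarrow$ analytic conjugacy.} Let $h\colon\cA_f\to\cA_g$ be extendable. Choose $R$ large enough that the B\"ottcher coordinates $\phi_f,\phi_g$ are defined on $U_\infty=\{\,|z|>R\,\}\cup\{\infty\}$ and on the analogous region $U_\infty'$ for $g$, and so that $h$ on $\cA_f\cap U_\infty$ equals $\phi_g^{-1}\circ\phi_f$ (possible after enlarging $R$, as for a monic polynomial the B\"ottcher coordinate is the one tangent to the identity conjugating the map to $z\mapsto z^d$). Put $\varphi:=\phi_g^{-1}\circ\phi_f$ on $U_\infty$; this is an analytic isomorphism onto $U_\infty'$ conjugating $f$ to $g$ and extending $h|_{\cA_f\cap U_\infty}$. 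Set $V_n:=\iter{f}{-n}(U_\infty)$ and $V_n':=\iter{g}{-n}(U_\infty')$, increasing exhaustions of $\cB(f)$ and $\cB(g)$ with $h(\cA_f\cap V_n)=\cA_g\cap V_n'$ (as $h$ conjugates the dynamics and matches B\"ottcher data near $\infty$). Assume inductively that $\varphi$ is defined on $V_n$ as an analytic isomorphism onto $V_n'$ with $g\circ\varphi=\varphi\circ f$ on $V_n$ and $\varphi|_{\cA_f\cap V_n}=h$. Then $V_{n+1}\setminus V_n$ is a union of Berkovich annuli and disks, and $f$ maps it into $V_n$, where $\varphi$ is already defined. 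On a component disjoint from $\cA_f$ there is no ramification---a finite map of annuli or disks of degree $>1$ ramifies along its skeleton, which consists of non-classical points and would therefore meet $\cA_f$---so there $f$ is an isomorphism onto its image, and we set $\varphi:=(\text{the branch of }g^{-1}\text{ matching }\varphi\text{ on the common boundary with }V_n)\circ\varphi\circ f$. On a component meeting $\cA_f$, we are forced to set $\varphi:=h$ along its $\cA_f$-part---consistently, since $h$ conjugates $f|_{\cA_f}$ with $g|_{\cA_g}$ and restricts to the already defined $\varphi$ on $\cA_f\cap V_n$---and we extend off $\cA_f$ by the unramified lifting of $g$, gluing continuously along the skeleton. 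Because $h$ is locally a translation it preserves local degrees, so the extended $\varphi$ has local degree $1$ throughout $V_{n+1}$, hence is a local isomorphism; a degree count (both $g$ and $\varphi\circ f$ have degree $d$ over $V_n'$) then shows $\varphi\colon V_{n+1}\to V_{n+1}'$ is an isomorphism. Finally $\varphi:=\bigcup_n\varphi|_{V_n}$ is an analytic conjugacy $\cB(f)\to\cB(g)$ that restricts to $h$ on $\cA_f$ and, on $V_0=U_\infty$, to the B\"ottcher coordinate change.

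\emph{Main obstacle.} The delicate step is the inductive gluing: $h$ lives on the measure-zero tree $\cA_f$, yet must determine, and be compatible with, the lift on the annuli surrounding $\cA_f$. This relies on (i) tameness of $f$ and $g$, so that away from $\cA_f$ the maps are unramified and \'etale lifting applies---in particular every classical critical point of $\cB(f)$ is a limit of non-classical ramification points and so lies in the closure of $\cA_f$---and (ii) the three properties packaged into ``extendable'' (isometric, diameter preserving, locally a translation), which force the values $\varphi=h$ along $\cA_f$ to agree, skeleton by skeleton, with the continuous limit of the \'etale lift built on the complementary disks, so that the glued map is genuinely analytic and of local degree $1$. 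Establishing this compatibility---effectively, the analytic continuation of $h$ across the critical annuli---is the technical core; the remaining verifications, that the result is everywhere a local isomorphism and that one may take the union over the exhaustion, are then routine.
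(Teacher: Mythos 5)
The overall architecture you choose---formal forward direction plus analytic continuation along an exhaustion of the basin for the converse---matches the paper's, which uses the sublevel sets $V_f(s)=\{|\phi_f|>s\}$ instead of your $f^{-n}(U_\infty)$ and argues that an already-constructed conjugacy $\phi$ on $V_f(s_\star)$ extends past each $x_0\in\partial V_f(s_\star)$. But there is a genuine gap in your treatment of the converse, and it sits exactly where you flag the ``technical core.''

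Your inductive step handles components of $V_{n+1}\setminus V_n$ disjoint from $\cA_f$ by the unramified lift of $g$, and then says that on a component meeting $\cA_f$ one ``extends off $\cA_f$ by the unramified lifting of $g$, gluing continuously along the skeleton.'' This does not work as stated: the components that meet $\cA_f$ are precisely those where $g$ \emph{is} ramified (a critical point of $g$ lies in the corresponding disk, and at a type II branch point $x_0\in\cA_f$ one has $\deg_{x_0}g\ge 2$). Off the ramification tree $g$ is \'etale, but the extension of $\phi$ across $x_0$ itself cannot be obtained as an \'etale lift plus a continuity appeal; one must produce an analytic germ at $x_0$ solving $g\circ\varphi=\varphi\circ f$ whose restriction to $\cA_f$ is the prescribed isometry $h$, and show it is unique. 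The paper does this by a chain of precise steps: it first shows $T_x\phi=T_xh$ for $x\in\cA_f(s_\star)$, then verifies, via a residue-field tangent-map criterion (Lemma~\ref{l:t-criteriumA}, which uses tameness in an essential way to control degrees along directions), that $T_{x_0}(g\circ\tau_0)=T_{x_0}(\tau_1\circ f)$ for the translations $\tau_0,\tau_1$ agreeing with $h$ near $x_0$ and $f(x_0)$; it then invokes a Hensel-type lemma (Lemma~\ref{lem:lift}, proved in the appendix) to solve $g\circ\tau_0\circ\psi=\tau_1\circ f$ near $x_0$, and finally a uniqueness statement for lifts on annuli (Corollary~\ref{c:identity}) to match this with the already-constructed $\phi$. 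None of these ingredients appear in your proposal; ``gluing continuously along the skeleton'' is exactly the compatibility your last paragraph concedes is unproved, so what remains is an outline, not a proof.

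Two smaller points. In the forward direction, your propagation of ``locally a translation'' by writing $\varphi$ as a branch of $g^{-n}\circ\varphi|_{U_\infty}\circ f^{n}$ and citing cancellation of local degrees establishes isometry but not that $\varphi$ literally agrees with a translation on a neighborhood in the tree; the latter is a statement about the residue-field reduction and needs something like the paper's Lemma~\ref{l:constant-skeleton} (Mittag--Leffler decomposition of maps on annuli) to conclude. Also, the definition of extendable conjugacy asks that $h$ agree with $\phi_g^{-1}\circ\phi_f$ on $\cA_f$ near $\infty$; one should not first define $h:=\varphi|_{\cA_f}$ and then choose $R$ so that it agrees with B\"ottcher---the hypothesis is that $\varphi$ extends the B\"ottcher change, so this is automatic, but the phrasing ``possible after enlarging $R$'' suggests an unneeded degree of freedom.
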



Let us now give an overview of the paper.

Section~\ref{sec:preliminaries} is devoted to preliminaries.
We start summarizing basic facts and notation related to the Berkovich affine line $\aoneberk$ in \S~\ref{sec:berkovich-space}.
The space of monic and centered polynomials $\Poly_d$ is introduced in \S~\ref{sec:poly}. Here we also discuss basic (dynamical) objects associated to a given polynomial such as the \emph{base point} and \emph{B\"ottcher coordinates}, and introduce the \emph{dynamical core}. Analytic families of polynomials
is the topic discussed in \S~\ref{sec:analytic-family} with emphasis on definitions such as passive/active critical points and constant B\"ottcher coordinates.

Section~\ref{sec:extendable} is devoted to the proof of Theorem~\ref{ithr:D}.
Extendable conjugacies are defined in~\S~\ref{sec:conjugacy}.
In \S~\ref{sec:analytic2extendable}, one direction of Theorem~\ref{ithr:D}
is established, namely that analytic conjugacies restrict to extendable conjugacies between dynamical cores (Corollary~\ref{c:ithr-D-easy}). The other direction
is obtained via an ``analytic continuation'' argument in \S~\ref{s:maps-annuli}
which employs a lemma proven in Appendix~\ref{appendix}.

In Section~\ref{sec:existence} we prove a slightly stronger version of Theorem~\ref{ithr:C} which relies on the notion of $\rho$-close B\"ottcher coordinates.
The stronger version of Theorem~\ref{ithr:C} will be needed to establish Theorem~\ref{ithr:A}.
Intuitively, given two polynomials $f, g$ we introduce $\rho \in ]0,\infty]$ to quantify
how close are the B\"ottcher coordinates of escaping critical points.
With this notion,
$\rho=\infty$ corresponds to having the same B\"ottcher coordinates.
Most of the proof is in \S~\ref{sec:passive2extendable}
after we introduce the $\rho$-trimmed dynamical core in~\S~\ref{sec:trimmed}
and use it to define the notion of $\rho$-close B\"ottcher coordinates in~\S~\ref{sec:close}.

Section~\ref{sec:rigidity} is devoted to prove Theorem~\ref{ithr:A} and
Corollary~\ref{icor:B}. In \S~\ref{sec:bounded} we deduce from Trucco's work
\cite{Trucco14} that tame polynomials in the closure
of the tame shift locus do not have bounded Fatou components.
In \S~\ref{sec:perturbation}, we finish the proof of the aforementioned results
constructing suitable analytic families to perturb into the shift locus a tame polynomial $f$ such that $\cJ(f) \subset \aone$. 

{\subsection*{Acknowledgements}
The authors would like to thank Charles Favre for useful conversations. The first author was partially supported by CONICYT PIA ACT172001.

\section{Preliminaries}
\label{sec:preliminaries}
For future reference  we
introduce basic definitions, notation and results
regarding Berkovich space {and analytic maps} in \S~\ref{sec:berkovich-space},
regarding the dynamics of monic and centered polynomials in \S~\ref{sec:poly}
and, regarding analytic families of such polynomials in \S~\ref{sec:analytic-family}.

\subsection{Berkovich space {and analytic maps}}
\label{sec:berkovich-space}
The elements of $\aoneberk$ are the multiplicative seminorms in $\field [z]$
that extend the absolute value $| \cdot |$ on $\field$.
As customary we write elements of $\aoneberk$ simply by $x$
and, when necessary, the corresponding seminorm of
$f(z) \in \field [z]$ is denoted  by $\| f(z) \|_x$. Also, to ease notations, let
 $|x| := \|z\|_x$. 
Via the identification of $ x\in \aone$ with the seminorm
$\|f(z)\|_x = |f(x)|$, we may regard $\aone$ as a subset of $\aoneberk$. 
Most of our work occurs in $\aoneberk$, however sometimes it is convenient to
regard $\aoneberk$ as naturally embedded in the Berkovich projective line
$\poneberk$ which (set theoretically) is obtained by adding one point, denoted by $\infty$, to $\aoneberk$.
The default topology for $\aoneberk$ and $\poneberk$
will be the Gel'fand topology.
With this topology $\aoneberk$ is locally compact and $\poneberk$ is the one-point compactification of $\aoneberk$.
Moreover, $\aone$ and $\pone$ are dense in $\aoneberk$ and $\poneberk$, respectively.
We refer the reader
to~\cite{Baker10, Benedetto19} for general background on the Berkovich affine and projective line specially adapted to one dimensional non-Archimedean dynamics.


Given $x \in \aoneberk$, let
$$\ol{D}_x := \left\{ y \in \aoneberk : \|f(z)\|_y \le  \|f(z)\|_x, \,\,  \forall f(z) \in \field[z] \right\}.$$
According to Berkovich's classification of seminorms,
$\ol{D}_x \cap \aone$  is
 a (possibly degenerate) $\C_v$-closed disk of radius $r \ge 0$ or the empty set.
If $r=0$ (resp. $r \in |\field^\times|$, $r \notin |\field^\times|$), then
$x$ is called a type I (resp. II, III) point. When $\ol{D}_x \cap \aone$
is empty, $x$ is called a type IV point.
The \textit{Gauss point}, denoted by $x_G$,
is the unique point whose associated disk
$\ol{D}_{x_G}$ is $\{ z \in \field : |z| \le 1 \}$.
It follows that $x_G$ is of type II.

Any point in $\aoneberk$ can be represented by the cofinal equivalence class of a decreasing sequence of $\C_v$-closed disks, so $\aoneberk$ is equipped with a partial order $\prec$ that extends the inclusion relation on the $\C_v$-closed disks, in particular, for $x_1,x_2\in\aoneberk$ of type I, II or III, $x_1\prec x_2$ if and only if $\ol{D}_{x_1} \cap \aone\subset\ol{D}_{x_2} \cap \aone$. The partial order $\prec$ extends to $\poneberk$ by declaring $\infty$ to be the unique maximal element in $\poneberk$. This partial order $\prec$ endows $\poneberk$ and $\aoneberk$ with a tree structure. Given two distinct points $y_1,y_2\in\poneberk$, denote by $[y_1,y_2]$ (resp. $]y_1,y_2[$) the closed (resp. open) segment in $\poneberk$ connecting $y_1$ and $y_2$. 

For $a \in \field$ and $r >0$, the \emph{Berkovich open and closed disks} with radii $r$ containing $a$, respectively are
\begin{eqnarray*}
  D(a,r) & : = & \{ x \in \aoneberk : \| z -a \|_x < r \}, \\
  \ol{D}(a,r) & : = & \{ x \in \aoneberk : \| z -a \|_x \le r \}.
\end{eqnarray*}
For any $x\in\aoneberk$, after defining the \emph{diameter} of $x$ by
$$\diam(x) := \inf \{ \| z - a \|_x : a \in \field \},$$
 we have $\ol{D}_x= \ol{D}(a,\diam(x))$ for all $a\in\ol{D}_x \cap \aone$ if $x$ is of type I, II, or III, and therefore $\diam(x)$  coincides with the diameter $r$ of $\ol{D}_x \cap \aone$; in this case we write $x=x_{a,r}$.  If $x$ is of type IV, then $\diam(x)>0$, while $\ol{D}_x \cap \aone$ is empty.

 The space $\H:=\aoneberk \setminus \aone$ is equipped with a metric $\dist$ called \emph{the hyperbolic metric} which is invariant under affine
 transformations. With respect to this metric $\H$ is an
$\R$-tree. The subspace topology in $\H$ is strictly coarser than the metric topology. We normalize $\dist$ so that $$\dist(x_{0,r}, x_{0,s}) = \log (s/r)$$
for all $0<r<s$.

We say that a closed and connected
subset $\aoneberk$ or $\poneberk$ is a 
\emph{subtree}. Unless explicitly stated, a  subtree is equipped with neither vertices nor edges.  However, for a subtree $\Gamma\subset\aoneberk$,  the \emph{valence} of $\Gamma$ at $x\in\Gamma$ is the number (maybe $\infty$) of connected components of $\Gamma\setminus\{x\}$. 
We denote by $\partial\Gamma$ the set of points in $\Gamma$ having valence $1$. A nonempty connected subset $S\subset\aoneberk$ is an \emph{open subtree} if $S=\ol{S}\setminus\partial\ol{S}$; so in this case $S$ is contained in $\mathbb{H}$ and any point in $S$ has valence at least $2$, moreover, for simplicity, set $\partial S:=\partial\ol{S}$. An (open) subtree in $\aoneberk$ is \emph{locally finite} if each point has finite valence and a neighborhood containing finitely many branch points of this (open) subtree. Given an open and connected set $U \subsetneq \aoneberk$, the \emph{skeleton} of $U$, denoted by $\sk U$, is the set formed by all $x \in U$ 
that separate the complement of $U$ in $\poneberk$.
It follows that $\sk U$ is a locally finite open subtree in $\mathbb{H}$. 

 Given a point $x \in \aoneberk$, a connected component of $\aoneberk \setminus \{x\}$ corresponds to a \emph{direction $\vec{v}$ at $x$}.
Denote by $D_x(\vec{v})$ the component of $\aoneberk \setminus \{x\}$
corresponding to the direction $\vec{v}$ at $x$.
When the point $x$ is clear from context or irrelevant, we will sometimes write
$D(\vec{v})$ for $D_x(\vec{v})$.
The set of all directions at $x$ forms the \emph{space of directions or (projectivized) tangent space at $x$}, denoted by $T_x \aoneberk$.
It will also be convenient to denote the unique direction at $x$ 
`containing' $z\in\aoneberk$  by $\vec{v}_x(z)$ and denote by $\vec{v}_x(\infty)$ the unique direction at $x$ such that $D(\vec{v}_x(\infty))$ is unbounded.
At type I and IV points, the tangent space is a singleton and
at type III, consists of two directions.
However, at type II points the tangent space is naturally
endowed with the structure of a projective line
over the residue field of $\field$ with a distinguished point $\infty$.

Given an open set $U \subset \aoneberk$, an \emph{analytic map} $\psi:U \to \aoneberk$ is a morphism between the corresponding analytic structures.
Analytic maps can be described in a more concrete manner as follows.
An \emph{affinoid} $X$ in $\aone$ is a subset of the form $\overline{B}(a,r) \setminus \cup B(a_i,r_i)$ where $r, r_i \in |\field^\times|$ and $a, a_i \in \aone$. The corresponding Banach $\field$-algebra $\cA(X)$ is formed by
the uniform limits of rational maps with poles outside $X$ endowed with the sup-norm $\|\cdot\|_X$ on $X$. Then $X_{an}$ is the space of all multiplicative seminorms on
$\cA(X)$ bounded by  $\|\cdot\|_X$ that agree with $|\cdot|$ on the constants, endowed with the Gel'fand topology.
Restriction to $\field[z]$ identifies $X_{an}$
with $\overline{D}(a,r) \setminus \cup D(a_i,r_i) \subset \aoneberk$. Analytic maps $\psi: X_{an} \to \aoneberk$ are naturally identified with $\cA(X)$. That is, given $\varphi \in \cA(X)$, $x \in X_{an}$ and $q(z) \in \field[z]$, then $\|q(z)\|_{\varphi(x)} = \|q(\varphi(z))\|_x$ defines a morphism $\varphi:  X_{an} \to \aoneberk$.
It follows that for any given arbitrarily large
hyperbolic ball in $\H$, the action of $\varphi:  X_{an} \to \aoneberk$ agrees with that of a rational map with poles outside $X$.
Given an open set $U \subset \aoneberk$, we have that $\psi: U \to \aoneberk$
is analytic if its restriction to every affinoid $X$ contained in $U$ is analytic. 
Assuming that $U$ is connected, nonconstant analytic maps
$\psi: U \to \aoneberk$ are open maps with isolated zeros.
Therefore, analytic maps that coincide on an open subset of $U$ are equal.
Furthermore, we have the following analytic removability result (c.f \cite[Proposition~2.7.13]{Fresnel04}):

\begin{theorem}\label{thr:removability}
  Assume that $K \subset \aone$ is a compact set contained in a disk $D \subset \aoneberk$.
  If $f: D \setminus K \to \aoneberk$ is a bounded analytic map, then
  $f$ extends to a analytic map $\bar{f}: D \to \aoneberk$. 
\end{theorem}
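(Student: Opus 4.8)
The plan is to use a Mittag--Leffler decomposition to isolate the ``part of $f$ that detects $K$'', to recognize this part as a bounded analytic function near $\infty$ vanishing at $\infty$, and then to kill it by combining ultrametric Cauchy estimates with the freedom to cover $K$ by disks of arbitrarily small radius. After an affine change of coordinates we may assume $D = D(0,1)$; since $K$ is a compact subset of $\aone$ contained in $D$, we have $K \subset \overline{D}(0,R_0)$ for some $R_0 < 1$, and since $f$ is bounded its image lies in some $\overline{D}(0,M) \subset \aoneberk$, so (using the description of analytic maps to $\aoneberk$ from \S\ref{sec:berkovich-space} as compatible families of elements of the relevant Banach algebras) we may treat $f$ as an analytic function on $D(0,1) \setminus K$ with $\|f\| \le M$. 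For $\epsilon \in\,]0,R_0]$, compactness of $K$ and the ultrametric inequality show that the $\epsilon$-neighborhood $K_\epsilon$ of $K$ is a finite disjoint union $K_\epsilon = \bigcup_{i=1}^{n_\epsilon} \overline{D}(z_i,\epsilon)$ with $z_i \in K$, that $K_\epsilon \subset \overline{D}(0,R_0) \subset D(0,1)$, and that $\bigcap_{\epsilon>0} K_\epsilon = K$; hence $D(0,1) \setminus K$ is the increasing union of the domains $D(0,1)\setminus K_\epsilon$.

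On each $D(0,1)\setminus K_\epsilon$ (exhausted by the affinoids $\overline{D}(0,r) \setminus K_\epsilon$ with $R_0 < r < 1$) the Mittag--Leffler/Runge decomposition for analytic functions on a disk with finitely many disjoint disks removed gives a unique splitting $f = g_\epsilon + \sum_{i=1}^{n_\epsilon} h_{i,\epsilon}$, where $g_\epsilon \in \cA(D(0,1))$ and each $h_{i,\epsilon}$ is analytic on $\poneberk \setminus \overline{D}(z_i,\epsilon)$ with $h_{i,\epsilon}(\infty)=0$; crucially, this decomposition is \emph{norm-preserving}, so $\|g_\epsilon\| \le M$ and $\|h_{i,\epsilon}\| \le M$ for all $i$. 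Uniqueness of the splitting forces compatibility as $\epsilon$ shrinks: for $\epsilon' < \epsilon$ the difference $g_{\epsilon'} - g_\epsilon = \sum_i h_{i,\epsilon} - \sum_i h_{i,\epsilon'}$ is analytic on $D(0,1)\cup(\poneberk \setminus K_\epsilon) = \poneberk$, hence constant, and it vanishes at $\infty$, so it is $0$. Therefore $g_\epsilon$ is a fixed $g \in \cA(D(0,1))$, the functions $h_\epsilon := \sum_i h_{i,\epsilon}$ glue to a single analytic function $h$ on $\poneberk \setminus K$ with $h(\infty) = 0$ and $\|h\| \le M$, and $f = g + h$ on $D(0,1)\setminus K$. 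It thus remains to show $h \equiv 0$; granting this, $\bar{f} := g$ is the desired extension, since $\bar{f} = f$ on $D(0,1)\setminus K$.

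To prove $h\equiv 0$, expand $h = \sum_{n\ge 1} b_n z^{-n}$ near $\infty$ (legitimate since $K \subset \overline{D}(0,R_0)$) and fix $n \ge 1$. For each small $\epsilon$, writing $h_{i,\epsilon}(z) = \sum_{k\ge 1} a_{i,k}\,(z-z_i)^{-k}$ (coefficients depending on $\epsilon$) and substituting into $h = \sum_i h_{i,\epsilon}$, then expanding in powers of $z^{-1}$, yields $b_n = \sum_i \sum_{k=1}^{n} \binom{n-1}{k-1}\, a_{i,k}\, z_i^{\,n-k}$. The ultrametric maximum-modulus estimate applied to $h_{i,\epsilon}$ on $\poneberk \setminus \overline{D}(z_i,\epsilon)$ gives $|a_{i,k}| \le \|h_{i,\epsilon}\|\,\epsilon^{k} \le M\epsilon^{k}$; since the $\binom{n-1}{k-1}$ are integers and $|z_i| \le R_0$, we obtain $|b_n| \le \max_{i,\,1\le k\le n} M\,\epsilon^{k}R_0^{\,n-k} = M R_0^{\,n-1}\,\epsilon$, because $\epsilon \le R_0$ makes $k=1$ the dominant term. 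Letting $\epsilon \to 0$ forces $b_n = 0$ for every $n$, so $h \equiv 0$ and the proof is complete.

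The main obstacle lies in the last two paragraphs, and is less a genuine ``difficulty'' than a matter of locating and invoking the right classical inputs: the Mittag--Leffler decomposition on a disk with finitely many disjoint sub-disks removed \emph{as an isometry} of Banach spaces (so that the global bound $\|f\|\le M$ descends uniformly to all principal parts $h_{i,\epsilon}$, independently of $\epsilon$ and of how many disks $n_\epsilon$ are needed), together with the bookkeeping of the ultrametric Cauchy estimates as the covering radius $\epsilon \to 0$. The isometric Mittag--Leffler decomposition is the standard tool from non-Archimedean analytic geometry that one quotes here (cf.\ \cite{Fresnel04}); the remainder is careful but routine power-series manipulation.
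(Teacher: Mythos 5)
The paper does not supply a proof of Theorem~\ref{thr:removability}; it is imported directly from \cite[Proposition~2.7.13]{Fresnel04}, so there is no in-paper argument to compare against. Your proof is correct, and the strategy---isolate the principal part $h$ of $f$ relative to a finite disjoint cover of $K$ by closed $\epsilon$-disks via the isometric Mittag--Leffler decomposition, glue the pieces by uniqueness as $\epsilon$ decreases to obtain a bounded analytic $h$ on $\poneberk\setminus K$ vanishing at $\infty$, and then kill $h$ coefficient-by-coefficient using ultrametric Cauchy estimates over the shrinking covers---is the natural one and is, to my knowledge, essentially the argument in the cited reference. The load-bearing observation, which you correctly flag, is that the bound $\|h_{i,\epsilon}\|\le M$ holds \emph{uniformly} in $i$, in $\epsilon$, and in the number $n_\epsilon$ of disks in the cover, because the Mittag--Leffler decomposition is an isometry; this is what makes $|b_n|\le M\epsilon R_0^{n-1}\to 0$ work. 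Two small clean-ups you may want to record: (i) $\epsilon$ should be taken in the value group $|\field^\times|$, and the affinoids that actually carry the Mittag--Leffler decomposition are $\overline{D}(0,r)\setminus\bigcup_i D(z_i,\epsilon)$ with \emph{open} disks removed rather than $\overline{D}(0,r)\setminus K_\epsilon$, after which the pieces $h_{i,\epsilon}$ live on $\poneberk\setminus D(z_i,\epsilon)$ and one passes to $\poneberk\setminus\overline{D}(z_i,\epsilon)$ by letting the inner radii decrease to $\epsilon$; and (ii) your normalization $K\subset\overline{D}(0,R_0)$ with $R_0<1$ uses that $D$ is an open disk, but the closed-disk case goes through verbatim with $R_0=1$, since the estimate $|b_n|\le M\epsilon$ still forces $b_n=0$ as $\epsilon\to 0$.
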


Assume that $U$ is an open set containing  $x \in \aoneberk$ and
$\psi: U \to \aoneberk$ is a non-constant analytic map. Then $\psi$ has a well defined local degree at every $x \in U$ which we denote by $\deg_x \psi$.
Write $y:= \psi(x)$, and define 
 the \emph{tangent map} 
 $T_x\psi: T_x \aoneberk \to T_y \aoneberk$ of $\psi$ at $x$ 
 by the property that there exists a neighborhood
 $V$ of $x$ such that $\psi(D_x(\vec{v}) \cap V) \subset D_{y} (T_x\psi(\vec{v}))$ for any $\vec{v}\in T_x \aoneberk$. The direction $T_x\psi(\vec{v})\in T_y\aoneberk$ exists and it is independent of the choice of $V$. When $x$ is a type II point, the map $T_x\psi$ is a non-constant rational map over the residue field of $\field$. Moreover, the local degree $\deg_x\psi$ of $\psi$ at $x$ coincides with the degree of  $T_x\psi$.

 {An analytic map is piecewise linear with respect to the hyperbolic metric.
The scaling factors or ``slopes'' are determined by the local degrees (c.f. \cite[Theorem 9.35]{Baker10}):}

\begin{lemma}\label{lem:factor}
 Let $\psi$ be a non-constant analytic map on an open set $U\subset \aoneberk$. If $\psi$  has constant local degree on a segment $]x_1,x_2[\subset U$, then  for any $x\in]x_1,x_2[$
 $$\dist(\psi(x_1),\psi(x_2))=\deg_x\psi\cdot\dist(x_1,x_2).$$
 \end{lemma}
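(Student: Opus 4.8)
The plan is to reduce the displayed identity to a purely local statement at each interior point of the segment, invoke the fine local structure of analytic maps there, and then globalize using that $\poneberk$ is a tree. Set $m:=\deg_x\psi$, which by hypothesis is the common value of the local degree at every point of $]x_1,x_2[$. Recall that a nonconstant analytic map preserves the type of a point, so $\psi$ maps $\H\cap U$ into $\H$; in particular we may assume $x_1,x_2\in\H$, since if, say, $x_1\in\aone$ then both sides of the identity are $+\infty$, and we may assume $[x_1,x_2]\subset U$ (the left-hand side already refers to $\psi$ at $x_1,x_2$). Every $x\in\,]x_1,x_2[$ has type II or III, so its tangent space contains the two distinct directions $\vec v_x^{+}$, $\vec v_x^{-}$ along which the segment leaves $x$ toward $x_2$ and toward $x_1$, respectively.

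The first and main step is a local scaling-and-no-folding claim: for every $x\in\,]x_1,x_2[$ there is a subsegment $]a_x,b_x[\,\ni x$ of $]x_1,x_2[$ such that $\psi$ maps $[a_x,b_x]$ onto the geodesic $[\psi(a_x),\psi(b_x)]$, dilating $\dist$ by the factor $m$, with $\psi(x)$ in the interior of the image. To get the scaling I would invoke the local description of analytic maps on $\aoneberk$ (\cite[Chapter~9]{Baker10}, concretely \cite[Theorem~9.35]{Baker10}): along a direction $\vec v$ at $x$ the map $\psi$ dilates $\dist$, near $x$, by the directional multiplicity of $\psi$ at $x$ in the direction $\vec v$ — which at a type II point is the local degree of the rational map $T_x\psi$ at $\vec v$ and, in general, equals $\deg_y\psi$ for $y$ of type III lying on that direction close to $x$. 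Applying this to $\vec v=\vec v_x^{\pm}$ and choosing such a point $y$ on $]x_1,x_2[$, this directional multiplicity is $\deg_y\psi=m$ on each side, so $\psi$ dilates $\dist$ by $m$ on a small symmetric subsegment around $x$. For the no-folding part I would show $T_x\psi(\vec v_x^{+})\ne T_x\psi(\vec v_x^{-})$: if these coincided, with common value $\vec w$, then when $x$ is of type III the map $\psi$ would send a neighborhood of $x$ into $D_{\psi(x)}(\vec w)$, contradicting openness of $\psi$; and when $x$ is of type II, the degree of the rational map $T_x\psi$ would be at least the sum of its local degrees at the distinct points $\vec v_x^{+}$ and $\vec v_x^{-}$, hence $\ge 2m$, contradicting $\deg_x\psi=\deg T_x\psi=m$. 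Thus $\psi$ does not backtrack at $x$, the image $\psi([a_x,x])$ and $\psi([x,b_x])$ lie on opposite sides of $\psi(x)$, and the claim follows.

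For the globalization I would parametrize $[x_1,x_2]$ by hyperbolic arclength, $\gamma\colon[0,L]\to[x_1,x_2]$ with $L=\dist(x_1,x_2)$, $\gamma(0)=x_1$, $\gamma(L)=x_2$, and consider $\Psi:=\psi\circ\gamma$. It is continuous for the metric topology of $\H$, since an analytic map is locally Lipschitz for $\dist$ with slopes the (locally bounded) local degrees. By the local claim $\Psi$ is, near every interior parameter, locally injective with $\Psi(t)$ separating its two local branches; and a path in an $\R$-tree with this no-backtracking property is injective with image the geodesic joining its endpoints. Hence $\Psi$ is a bijection onto $[\psi(x_1),\psi(x_2)]$ dilating arclength by the constant $m$, so $\dist(\psi(x_1),\psi(x_2))=mL$, and for an interior point $x=\gamma(t)$ additivity of $\dist$ along the geodesic $[\psi(x_1),\psi(x_2)]$ gives $\dist(\psi(x_1),\psi(x))=mt$ and $\dist(\psi(x),\psi(x_2))=m(L-t)$, which is the assertion. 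The main obstacle is the first step: it rests on the local theory of analytic maps on the Berkovich line — identifying the slope of $\psi$ in a direction with the directional multiplicity and, at type II points, with $\deg_x\psi$ via $T_x\psi$ — whereas the rest (non-folding via additivity of local degrees, and injectivity of non-backtracking paths in a tree) is soft.
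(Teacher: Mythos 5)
Your proof is correct and carefully reconstructs the result the paper treats as known, citing \cite[Theorem~9.35]{Baker10} without giving an argument. The local dilation at each interior $x$ in each of the two segment-directions is indeed exactly that theorem (one-sided), and the observation you add — that $T_x\psi(\vec v_x^{+})\ne T_x\psi(\vec v_x^{-})$, by a degree count on $T_x\psi$ at type II points and by openness at type III points — is precisely what is needed to patch the two one-sided statements into the two-sided one and rule out backtracking; the rest (non-backtracking paths in an $\R$-tree are geodesics, additivity of $\dist$ along a geodesic) is soft, as you say.

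One cosmetic point in the type III case: the image of a \emph{punctured} neighborhood of $x$ lies in $D_{\psi(x)}(\vec w)$, so the image of a full neighborhood lies in $D_{\psi(x)}(\vec w)\cup\{\psi(x)\}$, which is not a neighborhood of the type III point $\psi(x)$; that is the openness contradiction you intend, and the conclusion is correct.
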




\subsection{Polynomial dynamics}\label{sec:poly}

We denote by $\Poly_d$ the space of \emph{monic and centered polynomials} of degree $d \ge 2$ which is naturally identified via coefficients 
with $\field^{d-1}$, since the elements of $\Poly_d$ are of the form
$$f(z) = z^d + a_{d-2} z^{d-2} + \cdots + a_{0} \in \field[z].$$
Observe that every  degree $d$ polynomial is affinely conjugate
to a unique element of $\Poly_d$,  modulo conjugacy
given by multiplication by a $(d-1)$-th root of unity.

For $f$ as above,
$$f'(z) = d \cdot \prod_{i=1}^{d-1} (z -c_i),$$
where $\sum_{i=1}^{d-1} c_i =0$
and we say that $(f,c_1, \dots, c_{d-1})$ is  \emph{a critically marked polynomial}. In fact, $\crit(f) := \{ c_1, \dots, c_{d-1} \}$ is the set of critical points of $f$. 
Two critically marked polynomials $(f,c_1,\dots,c_{d-1})$ and
$(g, c_1', \dots, c_{d-1}')$ are \emph{affinely conjugate} if there exists
an affine map $A$ such that $ A \circ f = g \circ A$ and $A(c_i) = c_i'$ for all $i$. When the marking is clear from context, sometimes we simply say that $f$ is a critically marked polynomial.

Recall that the \emph{ramification locus} of $f\in \Polyd$ is 
$$\cR(f):=\{x\in\aoneberk: \deg_xf\ge 2\}.$$
According to Faber~\cite{Faber13} this set  is unbounded, connected
and $\crit(f) = \cR(f)\cap\aone$.  If $\cR(f)\cap\H$ is a locally finite open subtree, then we say that $f$ is a \emph{tame polynomial}.  In this case, $\cR(f)\cup\{\infty\}$ is the convex hull of $\crit(f)\cup\{\infty\}$ in $\poneberk$.
Let $p$ be the residue characteristic of $\C_v$.
If $p=0$, then all polynomials are tame. If $p\neq 0$, then $f$ is tame if and only if for all $x \in \aoneberk$ we have $p \nmid \deg_x f$ or, equivalently, the degree of $f: D \mapsto f(D)$ is not divisible by $p$ for all disks $D \subset \aone$. A thorough study of  $\cR(f)$ is contained in~\cite{Faber13, Faber13II}.

For tame polynomials, we have the following Riemann-Hurwitz formula:
\begin{lemma}[{\cite[Propositions 2.6 and 2.8]{Trucco14}}]\label{lem:RH}
Let $f\in \Polyd$ be a tame polynomial. For any $x\in\aoneberk$, 
$$\deg_xf-1=\sum_{z\in\ol{D}_x\cap\crit(f)}(\deg_z f-1)$$
\end{lemma}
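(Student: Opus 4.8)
The plan is to verify the identity pointwise, according to the Berkovich type of $x$, the substantial case being that of type II and III points, which I would treat by a short Newton polygon computation; the cases of type I and type IV points are immediate.

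First the easy cases. If $x$ is of type I, say $x = a \in \aone$, then $\ol{D}_x \cap \aone = \{a\}$ and the right-hand side is exactly $\deg_a f - 1$, the left-hand side. If $x$ is of type IV, then $\ol{D}_x \cap \aone = \emptyset$, so the right-hand side is an empty sum; and since $f$ is tame, $\cR(f) \cup \{\infty\}$ is the convex hull of $\crit(f) \cup \{\infty\}$ in $\poneberk$, which, being the convex hull of finitely many type I points together with $\infty$, contains no type IV point. Hence $\deg_x f = 1$ and both sides vanish.

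Now suppose $x$ is of type II or III. Conjugating $f$ by an affine map changes neither side, so I may assume $x = x_{0,r}$ for some $r>0$ (with $r \in |\field^\times|$ if $x$ is of type II). Write $f(z) = \sum_{k=0}^{d} b_k z^k$. Recall (see e.g.\ \cite[Ch.~9]{Baker10}) that $f(\ol{D}_x) = \ol{D}(b_0, s)$ with $s := \max_{0\le k\le d} |b_k|\,r^{k}$, and that the local degree $m := \deg_x f$ is the largest index $k$ with $|b_k|\,r^{k} = s$; alternatively, for type II points one may read off $m$ as the degree of the reduction of a suitable affine normalization of $f$. Since $f'(z) = \sum_{k=1}^{d} k b_k z^{k-1}$, the theory of Newton polygons says that the number of roots of $f'$ in $\ol{D}(0,r) = \ol{D}_x \cap \aone$, counted with multiplicity, is the largest $k$ for which $|k b_k|\, r^{k-1}$ is maximal among $1 \le k \le d$. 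Here is where tameness enters: $p \nmid \deg_x f = m$, so $|m| = 1$ (absolute value on $\field$), whence $|m b_m|\, r^{m-1} = |b_m|\, r^{m}/r = s/r$; and for every $k$ one has $|k b_k|\, r^{k-1} \le |b_k|\, r^{k}/r \le s/r$, with strict inequality when $k > m$. Therefore the maximum is $s/r$, attained with top index $m$, so $f'$ has exactly $m-1$ roots in $\ol{D}_x \cap \aone$. Finally, tameness at each critical point $z$ (i.e. $p \nmid \deg_z f$) ensures that the multiplicity of $z$ as a root of $f'$ equals $\deg_z f - 1$; summing over $z \in \ol{D}_x \cap \crit(f)$ yields the right-hand side.

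The only real obstacle is bookkeeping around where tameness is used: it is precisely the condition $p \nmid \deg_x f$ that forces $|m| = 1$ and hence the sharp equality $\max_k |k b_k|\,r^{k-1} = s/r$ with top index $m$ (without it one obtains only ``$\le$'', reflecting wild ramification), and likewise $p \nmid \deg_z f$ is what makes the multiplicity of $z$ in $f'$ equal to $\deg_z f - 1$. The max-index description of $\deg_x f$ and the Newton polygon count are standard.
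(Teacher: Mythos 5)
The paper does not prove this lemma; it simply cites Trucco's Propositions 2.6 and 2.8, so there is no internal proof to compare against. Your self-contained Newton-polygon argument is a reasonable substitute and is essentially correct. The type I and type IV cases are fine (for type IV you correctly invoke that tameness forces $\cR(f)\cup\{\infty\}$ to be the convex hull of $\crit(f)\cup\{\infty\}$, which avoids type IV points), and the substance of the type II/III case is right: both uses of tameness are exactly where they need to be, namely $p \nmid m$ forces $|m|=1$ so that the slope $s/r$ is attained with top index $m$, and $p\nmid\deg_z f$ identifies the order of vanishing of $f'$ at a critical point $z$ with $\deg_z f - 1$.

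One slip in phrasing that you should fix: you write that the number of roots of $f'$ in $\ol{D}(0,r)$ ``is the largest $k$ for which $|kb_k|\,r^{k-1}$ is maximal.'' Since $f'(z)=\sum_{k=1}^{d}kb_kz^{k-1}=\sum_{j=0}^{d-1}(j+1)b_{j+1}z^j$, the Newton-polygon count for the closed disk is the largest \emph{exponent} $j=k-1$ achieving the maximum, i.e.\ $k_{\max}-1$, not $k_{\max}$. You then correctly conclude that $f'$ has $m-1$ roots, so the slip does not propagate, but as stated the sentence is internally off by one and should read ``one less than the largest such $k$.'' With that correction the argument is clean.
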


From \cite{Faber13II} we deduce below that
perturbations preserving multiplicities of critical points preserve tameness.
Thus, sometimes it will be convenient to work, in the corresponding parameter space. Namely, consider $k \ge 1$ and  $\mathbf{d}:=(d_1, \dots, d_k)$
where $d_i \ge 2$ for all $i$ and
$$d_1+\cdots+d_k -k = d-1.$$
We denote by $\Poly(\mathbf{d})$ the space
formed by all the pairs $(f,c_1,\dots,c_k)$
where $f \in \Polyd$, $c_i \in \aone$ and $\deg_{c_i} f = d_i$ for all $i=1, \dots,k$. The space $\Poly(\mathbf{d})$ is naturally identified via $(f,c_1,\dots,c_k) \mapsto (c_1,\dots, c_k, f(0))$ with the space formed by the
elements $(c_1,\dots,c_k,b)$ of $ \field^k \times \field$
such that 
$\sum d_i c_i =0$ and $c_i \neq c_j$ if $i \neq j$. Sometimes we abuse of notation and simply say that
$f \in \Poly(\mathbf{d})$ and denote by $c_i(f)$ the corresponding marked critical points.

\begin{lemma}\label{lem:tame}
Tame polynomials are an open {(possibly empty)} subset of $\Poly(\mathbf{d})$ for any $\mathbf{d}$.
\end{lemma}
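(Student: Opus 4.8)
The plan is to reduce tameness to a condition on finitely many local degrees and then show that this condition is open. Fix $\mathbf{d}=(d_1,\dots,d_k)$ and a tame $(f,c_1,\dots,c_k)\in\Poly(\mathbf{d})$. By the characterization recalled after Lemma~\ref{lem:RH}, in residue characteristic $p\neq 0$ tameness of $f$ is equivalent to $p\nmid\deg_xf$ for every $x\in\aoneberk$ (if $p=0$ there is nothing to prove). The first step is to observe, using Lemma~\ref{lem:RH} (Riemann--Hurwitz), that $\deg_xf$ is controlled by which marked critical points lie in $\ol D_x$: one has $\deg_xf-1=\sum_{c_i\in\ol D_x}(d_i-1)$, since $\crit(f)=\{c_1,\dots,c_k\}$ with prescribed multiplicities $d_i$. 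Hence $\deg_xf$ takes only finitely many possible values as $x$ ranges over $\aoneberk$, namely $1+\sum_{i\in I}(d_i-1)$ for subsets $I\subset\{1,\dots,k\}$, and $f$ is tame if and only if for every subset $I$ that is actually \emph{realized} as $\{\,i : c_i\in\ol D_x\,\}$ for some $x$, the integer $1+\sum_{i\in I}(d_i-1)$ is prime to $p$.

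The second step is to understand which subsets $I$ are realized, in terms of the configuration of the points $c_1,\dots,c_k\in\aone$. A subset $I$ is realized by some disk $\ol D_x$ precisely when the smallest $\C_v$-closed disk containing $\{c_i : i\in I\}$ contains none of the remaining $c_j$, $j\notin I$; equivalently, $I$ is realized iff $\max_{i,i'\in I}|c_i-c_{i'}| < |c_i - c_j|$ is compatible, i.e. there is a disk separating $\{c_i\}_{i\in I}$ from the rest — this is a purely combinatorial condition on the ultrametric distances $|c_i-c_j|$, detected by the finite tree spanned by $c_1,\dots,c_k$. Since the map $(f,c_1,\dots,c_k)\mapsto(c_1,\dots,c_k)$ is continuous (indeed given by coordinates in the identification with a subset of $\field^k\times\field$), and since the relevant conditions "$|c_i-c_{i'}|\le|c_j-c_{j'}|$" are closed while the strict versions are open, one shows: for $g$ sufficiently close to $f$ in $\Poly(\mathbf{d})$, the collection of realized subsets $I(g)$ is \emph{contained in} $I(f)$. (Confluences can only be broken, not created, under small perturbation: two critical points that were far apart stay far apart, while points that were close may or may not separate.) Alternatively, and perhaps more cleanly, one invokes \cite{Faber13II} directly as the excerpt suggests: the result there that $\cR(g)$ varies nicely — in particular that small perturbations of $f$ preserving the multiplicities $d_i$ of the critical points keep $\deg_x g$ bounded by the local degrees appearing for $f$.

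The third step is to conclude. If $f$ is tame, every realized $I(f)$ gives $1+\sum_{i\in I}(d_i-1)$ prime to $p$; by the second step, for $g$ near $f$ every realized $I(g)$ lies in the finite set $I(f)$, hence also satisfies the coprimality condition, so $\deg_x g$ is prime to $p$ for all $x$, i.e. $g$ is tame. This shows the tame locus is open in $\Poly(\mathbf{d})$; it may of course be empty (for instance if some $d_i\equiv 0\pmod p$, forcing $\deg_{c_i}f$ divisible by $p$). The main obstacle I anticipate is making precise and rigorous the claim in the second step that no new confluence patterns among the $c_i$ appear under small perturbation and, simultaneously, that the local degree at \emph{every} point of $\aoneberk$ — not merely at the finitely many branch points of the tree spanned by the $c_i$ — is accounted for by Lemma~\ref{lem:RH}; the cleanest route is to quote the perturbation statement from \cite{Faber13II} rather than re-derive the behaviour of $\cR(\cdot)$ by hand.
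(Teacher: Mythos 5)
Your proposal has a genuine gap at its core: the implication ``for every realized $I$, $1+\sum_{i\in I}(d_i-1)$ is coprime to $p$ $\Rightarrow$ $g$ is tame'' is not established, and in fact relies on applying Lemma~\ref{lem:RH} to the perturbed polynomial $g$. But Lemma~\ref{lem:RH} is stated only for \emph{tame} polynomials, and tameness of $g$ is exactly what you are trying to prove. For a wild polynomial the Riemann--Hurwitz formula in Lemma~\ref{lem:RH} can fail strictly, with $\deg_x g > 1 + \sum_{c\in\ol D_x\cap\crit(g)}(\deg_c g -1)$; a simple instance is $g(z)=z^2+az$ with $|a|=1$ in residue characteristic $2$, where the unique classical critical point lies outside $\ol D_{x_G}$ so the right-hand side is $0$, yet $\deg_{x_G}g=2$. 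Equivalently, for wild maps the ramification locus $\cR(g)$ can spill outside the convex hull of $\crit(g)\cup\{\infty\}$, so knowing the ``realized subsets'' of critical points does not bound $\deg_x g$ from above. Thus the combinatorial reduction you propose does not close the argument, and this is a logical obstruction, not merely a matter of rigor to be outsourced to \cite{Faber13II}. (Your first step --- that the clustering pattern of $c_1,\dots,c_k$, hence the set of realized $I$, is preserved under any perturbation smaller than $\min_{i\ne j}|c_i-c_j|$ --- is correct, but it is the bridge from combinatorics to local degree that is missing.)

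The paper's proof is structured to avoid precisely this circularity. It never applies the Riemann--Hurwitz formula to $g$. Instead it chooses a large hyperbolic ball $B$ containing all branch points of $\cR(f)$ and a neighborhood of $f$ on which $g\equiv f$ over $B$, so $\cR(g)\cap B=\cR(f)\cap B$ is controlled; then, near each $c_i(g)$, it uses monotonicity of the local degree along $[c_i(g),z_i]$ together with the local analysis in \cite[\S 4]{Faber13II} (which uses $p\nmid d_i$ but does not presuppose tameness of $g$) to show that $\cR(g)\cap\ol D_{x_i}$ is a single arc; a similar computation near infinity and the connectedness of $\cR(g)$ then confine $\cR(g)$ to a finite union of arcs $[c_i(g),\infty[$, which is the definition of tameness. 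If you want to salvage your outline, you would need to replace the appeal to Lemma~\ref{lem:RH} at the perturbed parameter by such a direct control of $\cR(g)$.
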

\begin{proof}
{Let us assume that there exists a tame polynomial in $\Poly(\mathbf{d})$.}
First observe that if $k=1$, then the elements of $\Poly(\mathbf{d})$
    are unicritical polynomials which are tame if and only if $p \nmid d$.
    So we may assume that $k\ge 2$ and let $f \in \Poly(\mathbf{d})$
    be a tame polynomial.  Set $d_0 =d$ and $r_i =\min\{|\binom{d_i}{\ell}|:0\le \ell\le d_i\}$ for $i=0, \dots, k$. Note that $d_i\ge 2$ and $0<r_i\le 1$.
    Let $z_0$ be a type II point of sufficiently large diameter so that
    all the critical points of $f$ are contained in ${D}_{z_0}$. From the Riemman-Hurwitz formula, it follows that $\deg_{z_0} f =d$.
    Consider an open $\dist$-disk $B\subset\H$ of finite radius $R$ centered at $z_0$ such that $B$ contains all the branch points of $\cR(f)$ and for all $1\le i\le k$, there exist 
    $x_i, z_i \in ]c_i(f),z_0] \cap B$ such that $c_i(f)$ is the unique critical point of $f$ in $\ol{D}_{z_i}$ and $x_i \prec z_i$ with $\diam(x_i) < r_i \diam(z_i)$. We may also assume that $R > 1/r_0$ and pick $x _0 \in B$ such that
        $x_0 \succ z_0$ and $\diam(x_0) > \diam(z_0)/r_0$.
     Consider a neighborhood $\cU\subset\Poly(\mathbf{d})$ of $f$ such that for all $g \in \cU$ we have    $g(x)=f(x)$ for all $x \in B$ and $|c_i(g) - c_i(f)| < \diam(x_i)$ for all $i\ge 1$. It follows that $\cR(g) \cap B = \cR(f) \cap B$. Moreover, $c_i(g)\prec x_i$ is the unique critical point of $g$ in $\ol{D}_{z_i}$ for all $i\ge 1$. We now compute the local degrees of $g\in\cU$ at points in $\ol{D}_{x_i}$ applying \cite[Section 4]{Faber13II}. 
Observing that  $d_i=\deg_{z_i}f=\deg_{z_i}g$ and $\deg_{c_i(g)}g=d_i$, we conclude that $\deg_yg=d_i$ for all $y\in[c_i(g),z_i]$. 
Changing coordinates on the source and target by affine maps, we may assume that $c_i(g)=0=g(c_i(g))$ and $z_i=x_G$, which implies that $x_i=x_{0,s_i}$ with $s_i< r_i$. Then $0$ is the unique critical point of $g$ in $\ol{D}_{x_G}$ and $\deg_yg=d_i$ for all $y\in[0,x_G]$. Since $p\nmid d_i$, the conclusion in \cite[Section 4.1]{Faber13II} implies that $\cR(g)\cap\ol{D}_{x_{0,s_i}}=[0,x_{0,s_i}]$.
Let $\ol{D} = \aoneberk \setminus D_{x_0}$. Via the change of coordinates of the form $z \mapsto 1/(az)$, we may also conclude that $\cR(g) \cap \ol{D} = [x_0,\infty[$. 
Now let $ V=  B \cup \ol{D} \cup\left(\bigcup_{1\le i\le d-1} \ol{D}_{x_i}\right)$. Then $\cR(g) \subset V$, since $\cR(g)$ is connected.
    Hence, $\cR(g)$ is the union of the arcs $[c_i(g),\infty[$ for $1 \le i \le k$. Therefore, $g$ is tame for all $g \in \cU$.
\end{proof}

{\begin{remark}
The space $\Poly(\mathbf{d})$ may contain no tame polynomials if $\field$ has positive residue characteristic $p>0$. For example, if an entry $d_i$ of $\mathbf{d}$ is divisible by $p$, then all polynomials in  $\Poly(\mathbf{d})$ are not tame.
\end{remark}}



\smallskip
Given $f \in \Polyd$, the \emph{basin of infinity} of $f$
$$\cB(f) := \{ x \in \aoneberk : f^{\circ n} (x) \to \infty \}$$
is an  open, connected and
unbounded subset of $\aoneberk$.
Its complement
$$\cK(f) := \aoneberk \setminus \cB(f)$$
is the \emph{filled Julia set} of $f$ whose boundary
$$\cJ(f) := \partial \cK(f),$$
is the \emph{Julia set} of $f$.
The Fatou set is $\cF(f) := \aoneberk \setminus \cJ(f)$.
Every connected component of $\cK(f)$ is either a point or a closed disk. In the latter case, every maximal open disk contained in the component of $\cK(f)$ is a bounded Fatou component.

Let $$\cR_\infty (f) := (\cR(f)\cap\cB(f)) \setminus \crit(f)$$
be the set formed by the
non-classical ramification points in the basin of inifinity.
We say that the forward invariant set
$$\cA_f:=\bigcup_{j\ge 0}f^{\circ j} (\cR_\infty(f))$$
is
the \emph{dynamical core of $f$ in $\cB(f)$}.
If in addition we assume that $f$ is tame, then after denoting the postcritical set by $$\post(f):=\bigcup_{j\ge 0}f^{\circ j}(\crit(f))$$ and considering
the open and connected subset of $\aoneberk$
$$\cB_0(f):=\cB(f)\setminus\post(f),$$ we have that
$$\cA_f \subset \sk\cB_0(f).$$ 
Thus, the dynamical core $\cA_f$ of a tame polynomial $f$  is  a locally finite open subtree of $\H$ and any point in $\partial\cA_f$ is either a  postcritical point in $\cB(f)$ or a point in $\cJ(f)\cap\H$.



\smallskip
Given a monic and centered polynomial $f(z) = a_0 + a_1 z + \cdots + z^d$
let $$R_f := \max \{ 1, |a_i|^{1/(d-i)}: i=0,\dots, d-2 \}.$$
Then $$x_f := x_{0, R_f}$$
is called the \emph{base point of $f$}.
The base point is characterized by the fact that $\ol{D}_{x_f}$ is the minimal closed Berkovich disk containing $\cK(f)$, see \cite[Section 6.1]{Rivera00}.
It follows that $\deg_{x_f}f=d$ and 
$x_f\in\cR(f)$.
If moreover  $f$ is tame, then every  $c\in\crit(f)$ is contained in $\ol{D}_{x_f}$. Keep in mind that $R_f = |x_f|$.

A polynomial $f\in\Poly_d$ is \emph{simple} if its Julia set is a singleton; {otherwise, we say $f$ is \emph{nonsimple}. For simple $f\in\Poly_d$,} the unique point in  $\cJ(f)$ is the Gauss point
which is also the base point of $f$ and its dynamical core is the open segment connecting the base point to $\infty$. Moreover, 
a tame polynomial is simple if and only if its basin of infinity is critical point free.  {If $f\in\Poly_d$ is nonsimple, } the base point $x_f$  lies in $\cB(f)$ and hence in $\cA_f$, see \cite[Proposition 4.3]{Trucco14}.

\medskip

In an appropriate  coordinate near $\infty$, called the \emph{B\"ottcher coordinate},
a  degree $d$ monic and tame polynomial $f$ acts as the monomial
$z \mapsto z^d$.
More precisely, 
given a tame $f \in \Polyd$
there exists an  analytic isomorphism, 
$$\phi_f : \aoneberk \setminus \ol{D} (0,R_f) \to \aoneberk \setminus
\ol{D} (0,R_f)$$
such that $\phi_f\circ f = \phi_f^d$ and
$\phi_f(z)/z \to 1$ as $\aone \ni z \to \infty$
(e.g.~see~\cite{Ingram13,DeMarco19,Salerno20}).
The germ of $\phi_f$ at infinity is uniquely determined by the above
properties. It follows that $|\phi_f (z)| = |z|$ for all
$z \in \aoneberk \setminus \ol{D} (0,R_f)$.
Although $\phi_f$ does not extend to $\cB(f)$, its absolute value
$|\phi_f|$ extends via the functional equation $|\phi_f|(z)= |\phi (f(z))|^{1/d}$
to a well defined function on the whole basin of infinity $\cB(f)$.
Provided that $f \in \Poly_d$ is tame and nonsimple, it is not difficult to show that
\begin{equation}
  \label{eq:diameter}
    R_f = \max \{ |\phi_f| (c) : c \in \crit(f) \cap \cB(f) \} {>1,}
\end{equation}
and 
{$$R_f^d = \max \{ |\phi_f| (f(c)) : c \in \crit(f) \cap \cB(f) \},$$
see \cite[Proposition 3.4]{Trucco14}.}

To end this subsection, we show that B\"ottcher coordinates $\phi_f$ depend analytically on $f$. We mention here that such analyticity
  appears in \cite[Proposition 2.13]{Favre22} described in terms of Green function. To be more precise, let us introduce 
  the Tate algebra associated to a polydisk (e.g. see~\cite{Fresnel04}).
  For $n \ge 1$, considering 
  $\br=(r_1,\dots,r_n)$ where $r_i \in |\field^\times|$ for all $i$, we denote by $P(\br)$ the rational polydisk in $\field^n$ formed by all
  $(\lambda_1, \dots, \lambda_n) \in \field^n$ such that $|\lambda_i| \le r_i$ for all $i$; then the \emph{Tate algebra} $T_n[\br]$ is
  the one formed by all (formal) power series in $\field \llbracket \lambda_1, \dots, \lambda_n \rrbracket$ convergent in $P(\br)$. Endowed with the sup norm, the Tate algebra  $T_n[\br]$ is a Banach algebra over $\field$ \cite[Theorem 3.2.1]{Fresnel04}. Recall that $p\ge 0$ denotes the residue characteristic of $\mathbb{C}_v$.
\begin{lemma}\label{lem:B-analytic}
 Assume that $p\nmid d$ and pick $\br=(r_1,\dots,r_{d-1})$ with $r_i \in |\field^\times|$ for all $1\le i\le d-1$. Suppose that there exists $R\in |\field^\times|$ with $R>1$ such that $|x_f|<R$ for any $f\in P(\br)\subset\Poly_d\cong\field^{d-1}$. Then the map 
 $$\Phi:P(\br)\times\{|z|\ge R\}\to\{|z|\ge R\},$$
 sending $(f,z)$ to $\phi_f(z)$ is analytic.
 More precisely, setting $\Phi_\infty(f,z)=\Phi(f,z)/z$  we have $\Phi_\infty(f,1/z)\in T_d[\tilde{\br}]$ where $\tilde{\br}=(r_1,\dots,r_{d-1},R^{-1})$.
  \end{lemma}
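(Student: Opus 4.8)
The plan is to realize $\phi_f$ as the classical B\"ottcher product and to prove analyticity of $\Phi$ by presenting it as a limit, uniform on every affinoid, of analytic maps; the uniformity over all of $P(\br)$ is exactly what the hypothesis $|x_f|<R$ secures. First I would record the elementary consequences of that hypothesis. Writing $f(z)=z^{d}+a_{d-2}z^{d-2}+\cdots+a_{0}$, the identity $R_f=|x_f|<R$ yields $|a_i|<R^{d-i}$ for every $i$, and hence, by the ultrametric inequality, $\|f(z)\|_x=|x|^{d}$ whenever $|x|\ge R$. Consequently $f$ maps $\{|z|\ge R\}$ into itself, $|\iter{f}{n}(x)|=|x|^{d^{n}}$, and for $|w|\ge R$
\[
\left\|\frac{f(w)}{w^{d}}-1\right\|=\left\|\textstyle\sum_{i=0}^{d-2}a_i\,w^{i-d}\right\|\le\max_i|a_i|\,|w|^{i-d}<1,
\]
the left side being $\le (R/|w|)^{2}$ and so $\to 0$ as $|w|\to\infty$. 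Since $p\nmid d$ we have $|d|=1$, so Hensel's lemma furnishes, for each $u$ with $\|u-1\|<1$, a unique $u^{1/d}$ with $(u^{1/d})^{d}=u$ and $\|u^{1/d}-1\|=\|u-1\|$; moreover $u\mapsto u^{1/d}$ is an analytic self-map of the open ball $\{\|u-1\|<1\}$, being the inverse of the analytic isomorphism $X\mapsto X^{d}$ of that ball (here $|d|=1$ and $\|\zeta-1\|=1$ for every nontrivial $d$-th root of unity). I then set $\epsilon_n(f,z):=\bigl(\iter{f}{n+1}(z)/\iter{f}{n}(z)^{d}\bigr)^{1/d^{n+1}}-1$, so that $\|\epsilon_n(f,z)\|=\bigl\|\iter{f}{n+1}(z)/\iter{f}{n}(z)^{d}-1\bigr\|\le R^{2(1-d^{n})}$ for $n\ge 1$ and $\|\epsilon_0(f,z)\|<1$, all bounds being uniform over $P(\br)\times\{|z|\ge R\}$, with $\|\epsilon_n\|\to 0$.

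Next I would define $\Phi(f,z):=z\prod_{n\ge 0}\bigl(1+\epsilon_n(f,z)\bigr)$ and identify it with $\phi_f$. Each partial product $P_N(f,z):=z\prod_{n=0}^{N}\bigl(1+\epsilon_n(f,z)\bigr)$ is analytic on $P(\br)\times\{|z|\ge R\}$: the iterates $\iter{f}{n}(z)$ are polynomial in $z$ and in the coefficients of $f$; the ratios $\iter{f}{n+1}(z)/\iter{f}{n}(z)^{d}$ are analytic there (as $|\iter{f}{n}(z)|=|z|^{d^{n}}\neq 0$) and land in $\{\|u-1\|<1\}$; and composing with the analytic root $(\cdot)^{1/d^{n+1}}$ preserves analyticity. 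The estimate $\|\Phi(f,z)-P_N(f,z)\|\le|z|\max_{n>N}\|\epsilon_n(f,z)\|\le R'\,R^{2(1-d^{N+1})}$ on each affinoid $P(\br)\times\{R\le|z|\le R'\}$ ($R'\in|\field^\times|$) exhausting the domain shows $P_N\to\Phi$ uniformly there, whence $\Phi$ is analytic; also $\|\Phi(f,z)\|=|z|$, so $\Phi$ maps into $\{|z|\ge R\}$. Reindexing $n\mapsto n+1$ in the product gives $\Phi(f,f(z))=\Phi(f,z)^{d}$, while $\|\Phi(f,z)/z-1\|\le\max_i|a_i|\,|z|^{i-d}\to 0$ as $z\to\infty$; by uniqueness of the B\"ottcher germ, $\Phi(f,z)=\phi_f(z)$.

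For the refined statement I would substitute $z\mapsto 1/z$ in $\Phi_\infty(f,z):=\Phi(f,z)/z$. As $\Phi_\infty$ is analytic on $P(\br)\times\{|z|\ge R\}$ with $\|\Phi_\infty\|\equiv 1$ and $\Phi_\infty(f,z)\to 1$ as $z\to\infty$, the function $\Phi_\infty(f,1/z)$ is analytic and bounded on $P(\br)\times\bigl(\ol D(0,R^{-1})\setminus\{0\}\bigr)$; since the compact set $\{0\}\subset\aone$ is analytically removable (Theorem~\ref{thr:removability}, applied for each $f$, joint analyticity being retained because the extension is the uniform limit of the partial products $\prod_{n=0}^{N}\bigl(1+\epsilon_n(f,1/z)\bigr)$, each of which already extends analytically across $z=0$ — note $1/\iter{f}{n}(1/z)$ does), it extends to an analytic function on $P(\br)\times\ol D(0,R^{-1})=P(\tilde\br)$. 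An analytic function on this polydisk is an element of the Tate algebra $T_d[\tilde\br]$, which is the assertion.

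I expect the only real difficulty to be organizational: one must keep every estimate valid \emph{simultaneously} for all $f\in P(\br)$ — which is precisely the content of the hypothesis $|x_f|<R$ — and must check that each ratio $\iter{f}{n+1}(z)/\iter{f}{n}(z)^{d}$ lies strictly inside the open unit ball around $1$, which is the place where $p\nmid d$ (equivalently $|d|=1$) is used so that the $d^{n+1}$-th root is a genuine analytic operation. With that granted, the functional equation, the normalization, and the passage to $T_d[\tilde\br]$ are routine.
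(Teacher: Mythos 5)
Your proof is correct and follows essentially the same B\"ottcher-product strategy as the paper: both construct $\phi_f$ as a uniform limit of $d^n$-th roots of $f^{\circ n}(z)/z^{d^n}$ (your telescoping product $z\prod_{n=0}^N(1+\epsilon_n)$ is exactly the paper's $z\,\Phi_{N+1}(f,z)$), both use $p\nmid d$ so that the root is a Hensel/analytic operation that is an isometry near $1$, and both extract the uniform geometric estimate from the hypothesis $|x_f|<R$. One small remark: the appeal to removability of $\{0\}$ at the end is superfluous, since your own observation that each partial product $\prod_{n=0}^N(1+\epsilon_n(f,1/z))$ already extends analytically across $z=0$ and that the convergence is uniform on $P(\tilde{\br})$ (which is also the paper's Cauchy-in-$T_d[\tilde{\br}]$ argument) directly gives $\Phi_\infty(f,1/z)\in T_d[\tilde{\br}]$.
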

 \begin{proof}
   First it follows from \cite[Lemma 8]{Ingram13} the map $\Phi$ is well-defined. Now to prove the analyticity, we apply a standard convergence argument as in \cite[Lemma 7]{Ingram13}. For given $f\in P(\br)$ and $|z| \ge R$, define $\Phi_n(f,z)$ to be the $d^n$-th rood of $f^{\circ n}(z)/z^{d^n}$ such that $\Phi_n(f,z)-1\in\mathbb{C}_v\llbracket z^{-1}\rrbracket $. According to~\cite{Ingram13}$\Phi_n(f,z) \to \phi_f(z)/z$ as $n \to \infty$. The lemma will follow after showing that the convergence is uniform.

   Since $p\nmid d$,
  $$\left|\Phi_{n+1}(f,z)-\Phi_n(f,z)\right|=\left|\left(\Phi_{n+1}(f,z)\right)^{d^{n+1}}-\left(\Phi_n(f,z)\right)^{d^{n+1}}\right|.$$
  It follows that 
   \begin{align*}
   \left|\Phi_{n+1}(f,z)-\Phi_n(f,z)\right|&=\left|\frac{f^{\circ{n+1}}(z)}{z^{d^{n+1}}}-\left(\frac{f^{\circ n}(z)}{z^{d^n}}\right)^d\right|=\left|z^{-d^n}\right|\left|f(f^{\circ n}(z))-f^{\circ n}(z)\right|\\
   &=\left|z^{-d^{n+1}}\right|\left|\sum_{j=0}^{d-2}a_j(f)\left(f^{\circ n}(z)\right)^j\right|,
   \end{align*}
   where $a_j(f)$ is the coefficients of $z^j$ in $f$. 
   Let $\| f \|:=\max_{0\le j\le d-2}\{|a_j(f)|\}$ and   $M:=\sup\{||f||: f\in P(\br)\}<\infty$, we conclude that for sufficiently large $n$, 
    \begin{equation*}
    \left|\Phi_{n+1}(f,z)-\Phi_n(f,z)\right| \le\left|z\right|^{-d^{n+1}} \cdot \| f\| \cdot |z|^{d^n(d-2)} = |z|^{-2d^n} \cdot \| f\| \le MR^{-2d^n}. 
    \end{equation*}
  Note that $z \Phi_n (f,z) \to \phi_f(z)$ for all $f \in P(\br)$ and $|z| \ge R$.
    Moreover,  $\{\Phi_{n}(f,1/z) \}$ is a Cauchy sequence in the Banach algebra $T_d[\tilde{\br}]$
    and therefore its limit $\Phi_\infty(f,1/z)$ is analytic.

  \end{proof}



\subsection{Analytic family}
\label{sec:analytic-family}
We consider analytic families parametrized by an open  disk $\Lambda$ in $\field$ of radius in $|\field^\times|$.
A family $\{f_\lambda\}_{\lambda\in\Lambda}\subset\Poly_d$ is \emph{analytic} if
$$f_\lambda(z) = z^d + \sum_{i=0}^{d-2} a_i (\lambda) z^i$$
where $a_i(\lambda)$ is analytic for all $i$ (i.e. a power series in $\lambda$
which converges for all $\lambda \in \Lambda$).
In this case, we say that $\{(f_\lambda,c_1(\lambda), \dots, c_{d-1}(\lambda))\}$ is a \emph{critically marked
  analytic family} if $(f_\lambda,c_1(\lambda), \dots, c_{d-1}(\lambda))$
is a critically marked polynomial for all
$\lambda \in \Lambda$ and $c_i(\lambda)$ is analytic in $\Lambda$ for all $i$.
To simplify notation, we sometimes just label by $\lambda$
the objects associated to $f_\lambda$.
Also we will often omit the markings $c_i(\lambda)$ from the notation and
simply say that $\{f_\lambda\}$ is a critically marked analytic family implicitly assuming that the markings are $c_1(\lambda), \dots, c_{d-1}(\lambda)$.

When  $\{f_\lambda\}_{\lambda\in\Lambda}\subset\Poly_d$ is an analytic family and
  $c: \Lambda \to \field$ is an analytic function such that
$c(\lambda)$ is a critical point of $f_\lambda$ for all $\lambda$ we
say that $c(\lambda)$ is a \emph{marked critical point} of the family $\{f_\lambda\}$.

\begin{definition}
  Let $\{f_\lambda\}_{\lambda\in\Lambda}$ be an analytic  family with a
  marked critical point  $c(\lambda)$.
  We say that $c(\lambda)$ is a \emph{passive critical point in $\Lambda$} if $\{\lambda\in\Lambda : c(\lambda)\in\cB(\lambda)\}=\Lambda$ or $\emptyset$.
We say that  a critically marked analytic family parametrized by $\Lambda$ is \emph{passive in $\Lambda$}
  if all its critical points are passive in $\Lambda$. 
\end{definition}

We emphasize that our definition of
passive is not a local property, it depends on the domain $\Lambda$ of
the analytic family $\{f_\lambda\}$. 

For an analytic family $\{f_\lambda\}_{\lambda\in\Lambda}$, denote by  $\phi_{\lambda}: \aoneberk \setminus \ol{D} (0,R_\lambda) \to \aoneberk \setminus\ol{D} (0,R_\lambda)$ the B\"ottcher coordinate of $f_\lambda$. Given
$\lambda_0, \lambda_1 \in \Lambda$ and  $R=\max\{R_{\lambda_0},R_{\lambda_1}\}$
we say that 
$\phi_{\lambda_1}^{-1}\circ\phi_{\lambda_0}: \aoneberk \setminus \ol{D} (0,R) \to \aoneberk \setminus\ol{D} (0,R)$
is the \emph{B\"ottcher coordinate change} between the dynamical space
of $f_{\lambda_0}$ and $f_{\lambda_1}$. 

\begin{definition}\label{def:constant-B}
  Consider a passive marked critical point
  $c(\lambda)$ in $\cB(\lambda)$
  of an analytic family $\{f_\lambda\}$ of tame polynomials
  parametrized by $\Lambda$.
  We say that the \emph{B\"ottcher coordinate of $c(\lambda)$ is constant} if
  for all $\lambda_0, \lambda_1 \in \Lambda$, there exists $n$ such that
  the corresponding B\"ottcher coordinate change
  sends $f^{\circ n}_{\lambda_0}(c(\lambda_0))$ to $f^{\circ n}_{\lambda_1}(c(\lambda_1))$. 
\end{definition}

As an immediate consequence of~\eqref{eq:diameter} we have that
base points of passive families are constant:

\begin{lemma}
  \label{l:passive-constant}
  Let $\{f_\lambda\}_{\lambda\in\Lambda}$ be a passive critically marked analytic family of tame polynomials parametrized by a disk $\Lambda$.
  Then the base point of $f_\lambda$ is independent of
  $\lambda\in\Lambda$. 
\end{lemma}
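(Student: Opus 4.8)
The plan is to exploit the formula~\eqref{eq:diameter}, which expresses $R_\lambda = R_{f_\lambda}$ in terms of the B\"ottcher radii of the escaping critical points, together with the dichotomy provided by passivity. First I would observe that if the family is simple for one (equivalently every) $\lambda$ — i.e. no critical point escapes — then $\cB(\lambda)$ is critical point free and $x_{f_\lambda} = x_G$ is the Gauss point for all $\lambda$, so there is nothing to prove. Hence I may assume the family is nonsimple, and by passivity the set of escaping critical points $\{i : c_i(\lambda) \in \cB(\lambda)\}$ is the same nonempty set for all $\lambda \in \Lambda$; call it $S$.

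The key step is to show that $\lambda \mapsto R_\lambda$ is locally constant, hence constant on the disk $\Lambda$. By~\eqref{eq:diameter}, $R_\lambda = \max_{i \in S} |\phi_\lambda|(c_i(\lambda))$. I would argue that each function $\lambda \mapsto |\phi_\lambda|(c_i(\lambda))$ is constant on $\Lambda$ for $i \in S$. To see this, fix $i \in S$ and $\lambda_0 \in \Lambda$. Since $c_i(\lambda_0) \in \cB(\lambda_0)$, some iterate $w_0 := f_{\lambda_0}^{\circ n}(c_i(\lambda_0))$ lies in the region $\{|z| > R_{\lambda_0}\}$ where the B\"ottcher coordinate $\phi_{\lambda_0}$ is defined and $|\phi_{\lambda_0}|(w_0) = |w_0|$; by the functional equation $|\phi_\lambda|(c_i(\lambda)) = |\phi_\lambda|(f_\lambda^{\circ n}(c_i(\lambda)))^{1/d^n}$, it suffices to show $|f_\lambda^{\circ n}(c_i(\lambda))|$ is locally constant near $\lambda_0$. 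Now $\lambda \mapsto f_\lambda^{\circ n}(c_i(\lambda))$ is analytic, and at $\lambda_0$ its value $w_0$ satisfies $|w_0| > R_{\lambda_0} \ge 1$; by continuity of the analytic family there is a neighborhood $\Lambda_0$ of $\lambda_0$ on which $|f_\lambda^{\circ n}(c_i(\lambda))| = |w_0|$ (an analytic function cannot change absolute value without passing through a smaller value, and here the value stays bounded away from the potential obstruction $R_\lambda$, which itself is controlled since $R_\lambda$ is upper semicontinuous and can be bounded on a small neighborhood). More precisely, on a sufficiently small disk around $\lambda_0$ one has $R_\lambda \le R'$ for some $R' < |w_0|$, and then $f_\lambda^{\circ n}(c_i(\lambda)) \in \{|z| > R'\}$ for all such $\lambda$, so $|\phi_\lambda|(f_\lambda^{\circ n}(c_i(\lambda))) = |f_\lambda^{\circ n}(c_i(\lambda))|$, which is a nonvanishing analytic function and hence has locally constant absolute value. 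Taking $d^n$-th roots and then the maximum over $i \in S$, I conclude $R_\lambda = R_{\lambda_0}$ on $\Lambda_0$. Since $\Lambda$ is connected, $R_\lambda$ is globally constant, and therefore $x_{f_\lambda} = x_{0,R_\lambda}$ is independent of $\lambda$.

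The main obstacle is the circularity between $R_\lambda$ and the B\"ottcher radii: the formula~\eqref{eq:diameter} requires knowing $R_\lambda$ to even speak of $\phi_\lambda$, yet one wants to deduce $R_\lambda$ from $\phi_\lambda$. The way around this is the observation that for an escaping critical point the orbit eventually lands in $\{|z| > R_\lambda\}$ where $|\phi_\lambda|$ agrees with the (coordinate-free) absolute value $|z|$, and there the analyticity of the iterate $f_\lambda^{\circ n}(c_i(\lambda))$ in $\lambda$ — together with the non-Archimedean fact that an analytic function taking a value of absolute value $> R'$ keeps that absolute value locally — gives local constancy without ever needing $\phi_\lambda$ itself to be globally analytic. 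One should be slightly careful that the required iterate number $n$ and the bound $R'$ can be chosen uniformly on a small neighborhood of each $\lambda_0$; this follows because once $|f_{\lambda_0}^{\circ n}(c_i(\lambda_0))| > R_{\lambda_0}$ strictly, continuity of the finitely many coefficient functions and of $c_i(\lambda)$ preserves this strict inequality nearby, while $R_\lambda$ itself stays bounded by $\max\{1, |a_j(\lambda)|^{1/(d-j)}\}$, a continuous function of $\lambda$.
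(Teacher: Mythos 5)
The approach starts out in the same spirit as the paper's---both exploit~\eqref{eq:diameter} together with the non-Archimedean fact that a non-vanishing analytic function on a disk has constant absolute value---but your proof contains a genuine gap at the very last step. The parameter disk $\Lambda\subset\field$ with the metric topology is \emph{totally disconnected} (every non-Archimedean ball is a disjoint union of proper clopen sub-balls), so "locally constant'' does \emph{not} imply "constant.'' Your argument correctly produces, for each $\lambda_0$, a small sub-disk $\Lambda_0$ on which $R_\lambda=R_{\lambda_0}$, but the final sentence "Since $\Lambda$ is connected, $R_\lambda$ is globally constant'' is simply false in this setting. Moreover, the size of $\Lambda_0$ depends on $\lambda_0$ (through the choice of $n$ and the bound $R'$), and there is no uniform control that would let you patch the local disks together. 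As written, the proof does not reach the conclusion.

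The paper avoids this by making the argument global from the start. Pick the marked critical point $c_i(\lambda_0)$ that \emph{achieves the maximum} in~\eqref{eq:diameter}, so that $R_{\lambda_0}=|\phi_{\lambda_0}|(c_i(\lambda_0))$ and hence $|v(\lambda_0)|=R_{\lambda_0}^d$ for $v(\lambda):=f_\lambda(c_i(\lambda))$. The function $v(\lambda)-c_i(\lambda)$ is analytic on \emph{all} of $\Lambda$ and never vanishes there---if it did, $c_i(\lambda)$ would be a fixed critical point, contradicting passivity---so $|v(\lambda)-c_i(\lambda)|\equiv R_{\lambda_0}^d$ on the \emph{whole} disk, not just locally. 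Combined with the ultrametric estimates $|c_i(\lambda)|\le R_\lambda$ and $|v(\lambda)|\le R_\lambda^d$, this gives $R_{\lambda_0}^d\le R_\lambda^d$ for every $\lambda\in\Lambda$, and since $\lambda_0$ was arbitrary, symmetry forces $R_\lambda$ to be constant. Note what this buys compared to your version: by anchoring to the extremal critical point, one needs only the single iterate $n=1$, and the global non-vanishing of $v-c_i$ replaces your local continuity argument; there is then no local-to-global passage to worry about. If you wish to salvage your version, you would need to upgrade your local constancy to a global statement, and the natural way to do that is precisely to exhibit a globally non-vanishing analytic function---which is what the paper's choice of the extremal $c_i$ accomplishes.
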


\begin{proof}
  We may assume that the maps involved are nonsimple.
  Pick an element $\lambda_0 \in \Lambda$ and apply \eqref{eq:diameter}
  to find a critical point $c_i(\lambda_0) \in \cB(\lambda_0)$ such
  that $R_{\lambda_0} = |\phi_{\lambda_0}|(c_i(\lambda_0))$.
  Keep in mind that $|c_i(\lambda)| \le R_\lambda$ for all $\lambda$.
  Let $v(\lambda) =f_\lambda (c_i(\lambda))$. Then
  $|v(\lambda_0)| = R^d_{\lambda_0}$.
  Observe that
  $v(\lambda)-c_i(\lambda) \neq 0$ for all $\lambda$, since the critical points are all passive. Therefore, $|v(\lambda)-c_i(\lambda)|$ has constant
  value  $R_{\lambda_0}^d$.
  Thus, for all $\lambda$,
  $$R_{\lambda_0}^d = |v(\lambda)-c_i(\lambda)| \le R_\lambda^d$$
  where the last inequality follows from
  $|c_i(\lambda)| \le R_\lambda$ and $|v(\lambda)| \le R_\lambda^d$.
  Since this occurs for all $\lambda_0 \in \Lambda$, we have that $R_\lambda$ is constant.
\end{proof}

{Observe that if the base point $x_\Lambda$ of an analytic
family $\{f_\lambda\}$ of tame polynomials is independent of $\lambda$, then the B\"ottcher coordinate change
$\phi^{-1}_\lambda \circ \phi_{\lambda_0}(z)$ is also analytic in $\lambda$ for all
$z$ such that $|z| > |x_\Lambda|$ {(see Lemma \ref{lem:B-analytic})}.
Hence, if a passive marked critical point
$c(\lambda)$ in $\cB(\lambda)$ has constant B\"ottcher coordinate, 
then the B\"ottcher coordinate change
sends $f^{\circ m}_{\lambda_0}(c(\lambda_0))$ to $f^{\circ m}_{\lambda}(c(\lambda))$ for all $m \ge 1$ such that $|f^{\circ m}_{\lambda_0}(c(\lambda_0))| > |x_\Lambda|$.}





\section{Extendable  and Analytic Conjugacies}
\label{sec:extendable}
The aim of this section is to discuss the notion of extendable conjugacies and prove Theorem~\ref{ithr:D}.


\subsection{Extendable conjugacies}\label{sec:conjugacy}
We shall investigate conditions under which the conjugacy $\phi^{-1}_g \circ \phi_f$ furnished near $\infty$
by the B\"ottcher coordinates of two polynomials $f,g \in \Polyd$
extends to an analytic conjugacy $\varphi: \cB (f) \to \cB(g)$.

\begin{definition}\label{def:conjugacy}
  Consider tame polynomials $f, g \in \Polyd$ and locally finite unbounded
  open subtrees   $\cT_f, \cT_g \subset \H$  such that $f(\cT_f) \subset \cT_f$ and $g(\cT_g) \subset \cT_g$. A $\dist$-isometry $h: \cT_f \to \cT_g$ 
  is an  \emph{extendable conjugacy} if
 \begin{enumerate} 
  \item (conjugacy) $h \circ g = f \circ h$,
  \item (B\"ottcher coordinate change)
  $\phi_g^{-1} \circ \phi_f (x) = h(x)$ for all $x \in \cT_f$ in a neighborhood of $\infty$, and
 \item (locally a translation)
  for all $x \in \cT_f$, there exists a translation
  $\tau_x (z) =z +b_x$ such that $\tau_x (y) = h(y)$ for all $y\in\cT_f$ in a neighborhood of $x$.
  \end{enumerate}
\end{definition}


  
The following result implies that an extendable conjugacy has a well-defined tangent map.
\begin{lemma}
  \label{l:unique-admissible}
  Let $\cT_0,\cT_1$ be two  locally finite open subtrees in $\H$ and let  $\iota: \cT_0 \to \cT_1$ be locally a translation.
  Consider $x \in \cT_0$ and two translations $\tau$ and $\tau'$ which agree with $\iota$ in a neighborhood of $x$. Then $T_x \tau = T_x \tau'$. 
\end{lemma}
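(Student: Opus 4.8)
The plan is to use that an affine translation $z\mapsto z+b$ of $\aone$ is completely determined by its value at a single point; so it suffices to exhibit one point of $\cT_0$ at which $\tau$ and $\tau'$ take the same value, and then read off the tangent maps from the resulting equality of analytic self-maps of $\aoneberk$.

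First I would choose neighborhoods $V,V'$ of $x$ in $\cT_0$ on which $\tau$ and $\tau'$, respectively, coincide with $\iota$, and set $W:=V\cap V'$, which is again a neighborhood of $x$ in $\cT_0$. On $W$ we have $\tau=\iota=\tau'$; in particular, since $x\in W$, we get $\tau(x)=\tau'(x)$. Writing $\tau(z)=z+b$ and $\tau'(z)=z+b'$, this forces $b=\tau(x)-x=\tau'(x)-x=b'$, hence $\tau=\tau'$ as polynomial maps and therefore as analytic self-maps of $\aoneberk$. Setting $y:=\tau(x)=\tau'(x)$, the maps $T_x\tau$ and $T_x\tau'$ are then literally the tangent map at $x$ of one and the same analytic map $T_x\aoneberk\to T_y\aoneberk$, so they agree.

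To be robust against reading ``agree with $\iota$ in a neighborhood of $x$'' as a punctured or merely directional statement, I would instead invoke that $\cT_0$ is an open subtree, so $x$ has valence at least $2$ in $\cT_0$ and $W$ contains a nondegenerate segment $]x,y_0]\subset\cT_0$; then $\tau$ and $\tau'$ agree on the infinite set $]x,y_0]$, and two affine maps agreeing on an infinite set coincide, which yields the same conclusion. I do not expect a genuine obstacle here: the only point requiring care is the interpretation of ``neighborhood of $x$ in $\cT_0$'' together with the observation — which is exactly the content of $\cT_0$ being an open subtree — that such a neighborhood is never reduced to a single point. The argument is moreover uniform in the type (I, II, III or IV) of $x$, since it uses nothing beyond the rigidity of affine maps.
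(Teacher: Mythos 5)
The step that fails is the claim that $\tau(x)=\tau'(x)$ forces $b=b'$ (and, in the backup, that two affine maps agreeing on an infinite set of Berkovich points coincide). At a non-classical point $x$ of diameter $r>0$, the equality $\tau(x)=\tau'(x)$ for $\tau(z)=z+b$, $\tau'(z)=z+b'$ only gives $|b-b'|\le r$; it does \emph{not} pin down $b$, since the expression $\tau(x)-x$ has no meaning when $x\in\H$. Likewise two distinct translations may agree identically on an infinite Berkovich segment: for instance $z\mapsto z$ and $z\mapsto z+b$ agree at every point of diameter $\ge|b|$, hence on the whole ray $[x_{0,|b|},\infty[\,$. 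So the deduction that $\tau=\tau'$ as analytic maps, from which the student reads off equality of tangent maps, is simply false; the two translations need not coincide, and the ``rigidity of affine maps'' invoked is a phenomenon of classical points that does not transfer to $\H$.

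What actually saves the conclusion is the finer structure the paper uses. Because $\cT_0$ is an open subtree, $x$ has valence at least $2$ in $\cT_0$, so a neighborhood of $x$ in $\cT_0$ meets at least two directions at $x$; since $\tau=\iota=\tau'$ on such a neighborhood, $T_x\tau$ and $T_x\tau'$ agree on these directions (equal images of a segment in a direction $\vec v$ forces $T_x\tau(\vec v)=T_x\tau'(\vec v)$), and an affine self-map of $\pone(\tilde k)$, which a translation's tangent map is, is determined by its values at two points; hence $T_x\tau=T_x\tau'$ even though possibly $\tau\ne\tau'$. In short: the conclusion concerns only tangent maps, and the correct route is via agreement of tangent maps in enough directions, not via forcing the translations themselves to be equal.
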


\begin{proof}
Noting that the valence of $\cT_0$ at $x$ is at least $2$, since $\tau=\iota=\tau'$ in a neighborhood of $x$, we obtain that $T_x \tau$ and $T_x \tau'$ agree in at least two directions in $T_x\aoneberk$ and therefore $T_x \tau = T_x \tau'$.
\end{proof}

By Lemma \ref{l:unique-admissible}, if $h$ is an extendable conjugacy as in Definition \ref{def:conjugacy}, given $x\in\cT_f$ and setting $y=h(x)$, we denote by $T_x h : T_x \aoneberk \to T_y \aoneberk$ the unique map that agrees with the tangent map of a translation {$\tau_x$} locally coincident with $h$, that is $T_x h=T_x\tau_x$.

\subsection{From analytic to extendable conjugacies}
\label{sec:analytic2extendable}
In this subsection we discuss general results about analytic maps between open subsets of $\aoneberk$ and  prove one direction of Theorem~\ref{ithr:D} (see Corollary \ref{c:ithr-D-easy}). 

An open annulus in $\aoneberk$ is a set $A$ of the form $D (a,r) \setminus \ol{D}(a,s)$ for some $0 < s < r$. The image of a non-constant analytic map $\psi: A \to \aoneberk$ is either a disk or an  annulus (c.f.\cite[Proposition~9.44 and Lemma~9.45]{Baker10}). In the latter case, $\psi$  has a well defined degree, and in a certain sense,  
$T_x \psi$ is ``constant'' along type II points $x \in \sk A$:

\begin{lemma}
  \label{l:constant-skeleton}
  Let $A_0$ and $A_1$ be two open annuli contained in $\aoneberk$, and 
  suppose that $\psi : A_0 \to A_1$ is a surjective analytic map of degree $\delta$.
  Then there exist $\sigma \in
  \{+\delta,-\delta\}$ and $0\not=a\in \C_v$ such that 
for arbitrary $b\in\aone$ in the bounded component of $\aoneberk\setminus A_1$ and arbitrary $c\in\aone$ in the bounded component of $\aoneberk\setminus A_0$, setting $\gamma(z) = a (z-c)^\sigma + b$, we have that  $\psi(x)=\gamma(x)$ and $T_x \psi = T_x \gamma$
  for all $ x \in \sk (A_0)$.  
\end{lemma}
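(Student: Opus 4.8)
The plan is to reduce to a normalized configuration by affine changes of coordinates on source and target, extract the asserted monomial from the Laurent expansion of $\psi$, and then recover the full freedom in $b$ and $c$ by observing that translations by points of the bounded complementary components act trivially along the relevant skeleton.

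\emph{Reduction.} Let $\ol{D}_0$ and $\ol{D}_1$ be the closed bounded components of $\aoneberk\setminus A_0$ and $\aoneberk\setminus A_1$. Fixing $c_0\in\ol{D}_0\cap\aone$ and $b_0\in\ol{D}_1\cap\aone$ and pre- and post-composing $\psi$ with the translations $z\mapsto z-c_0$ and $w\mapsto w-b_0$, I may assume that $A_0=D(0,r_0)\setminus\ol{D}(0,s_0)$ and $A_1=D(0,r_1)\setminus\ol{D}(0,s_1)$ are centered at $0$, with $\sk(A_0)=\,]x_{0,s_0},x_{0,r_0}[\,$ and similarly for $A_1$. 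In this normalization it suffices to produce $\sigma\in\{+\delta,-\delta\}$ and $a\in\field^\times$ with $\psi(x)=(az^\sigma)(x)$ and $T_x\psi=T_x(az^\sigma)$ for every $x\in\sk(A_0)$; undoing the translations then gives the statement for the chosen $c_0,b_0$, with $\gamma(z)=a(z-c_0)^\sigma+b_0$.

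\emph{The dominant monomial on the skeleton.} Expand $\psi$ on $A_0$ as a convergent Laurent series $\psi(z)=\sum_{n\in\Z}a_nz^n$, so that the convex piecewise-linear function $u\mapsto\log\|\psi\|_{x_{0,e^u}}=\max_n(\log|a_n|+nu)$ records the radius of $\psi(x_{0,e^u})$. By the structure theory of analytic maps between annuli (\cite[Proposition~9.44 and Lemma~9.45]{Baker10}), surjectivity onto the annulus $A_1$ prevents this function from folding, and counting preimages of points near the two boundary components of $A_1$ shows that its slope near each end of $(\log s_0,\log r_0)$ has absolute value $\delta$; by convexity the slope is then constantly equal to some $\sigma$ with $|\sigma|=\delta$, i.e.\ $\|\psi\|_{x_{0,t}}=|a_\sigma|t^\sigma$ for all $t\in(s_0,r_0)$. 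A short argument using the strict monotonicity of $t\mapsto t^{n-\sigma}$ upgrades this to the strict domination $|a_n|t^n<|a_\sigma|t^\sigma$ for every $n\neq\sigma$ and every $t\in(s_0,r_0)$. Setting $a:=a_\sigma$ and $g(z):=az^\sigma$, one then has $\|\psi-g\|_{x_{0,t}}=\max_{n\neq\sigma}|a_n|t^n<\|g\|_{x_{0,t}}$ for all $t\in(s_0,r_0)$. At a type II point $x_{0,t}$ this inequality forces the rescaled reductions of $\psi$ and $g$ to agree, hence $\psi(x_{0,t})=g(x_{0,t})$ and $T_{x_{0,t}}\psi=T_{x_{0,t}}g$; at the type III points of $\sk(A_0)$ both assertions are immediate from $\|\psi\|_{x_{0,t}}=\|g\|_{x_{0,t}}$ together with monotonicity. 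This settles the normalized claim.

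\emph{Flexibility in $b$ and $c$, and the main obstacle.} Finally, for arbitrary $c\in\ol{D}_0\cap\aone$ and $b\in\ol{D}_1\cap\aone$ I would set $\gamma(z)=a(z-c)^\sigma+b$ and check that $\gamma$ agrees with $\gamma_0(z)=a(z-c_0)^\sigma+b_0$ on $\sk(A_0)$ together with tangent maps. Writing $\gamma=\tau_b\circ(az^\sigma)\circ\tau_{-c}$ and $\gamma_0=\tau_{b_0}\circ(az^\sigma)\circ\tau_{-c_0}$ with $\tau_\beta(z)=z+\beta$, one notes that $|c-c_0|\le s_0<t$ for every $x_{0,t}\in\sk(A_0)$, so $\tau_{-c}$ and $\tau_{-c_0}$ both fix $x_{0,t}$ with identity tangent map there, while $|(az^\sigma)(x_{0,t})|=|a|t^\sigma\in(s_1,r_1)$ exceeds $|b-b_0|\le s_1$, so $\tau_b$ and $\tau_{b_0}$ likewise agree on $(az^\sigma)(\sk(A_0))$ with identity tangent map. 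By the chain rule for tangent maps, $\gamma$ and $\gamma_0$ then agree on $\sk(A_0)$ with equal tangent maps, giving $\psi(x)=\gamma(x)$ and $T_x\psi=T_x\gamma$ for all $x\in\sk(A_0)$. I expect the main obstacle to be the structural input in the middle step — that a surjective degree-$\delta$ analytic map onto an annulus is monomial-dominated on all of $A_0$ (equivalently, restricts to an affine map of slope $\pm\delta$ on the skeletons); once this is granted, the remaining steps are formal, the only mild care being the uniform treatment of type II and III points and the bookkeeping with the two free centers.
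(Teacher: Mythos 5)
Your proof is correct and follows essentially the same approach as the paper's: translate to center both annuli at the origin, expand $\psi$ as a Laurent series, use surjectivity and degree $\delta$ to identify a single dominant monomial $a_\sigma z^\sigma$ with $|\sigma|=\delta$ and strict domination of the other terms on $\sk(A_0)$, and read off the agreement of values and tangent maps. Your final paragraph explicitly verifies that the single coefficient $a$ works for every choice of $c$ and $b$ (since translations by elements of the bounded complementary disks fix the relevant skeleton points with identity tangent map); the paper's proof normalizes with an arbitrary $c,b$ but does not spell out this invariance, so your version is marginally more complete on that bookkeeping point.
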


\begin{proof}
  Pick any $c \in \aone$ in the bounded component of $\aoneberk\setminus A_0$ and any $b \in \aone$ in the bounded component of $\aoneberk\setminus A_1$. Modulo the translation $z\mapsto z-c$ in the domain and the translation $z\mapsto z-b$ in the range, we may assume that $\sk(A_0)$ and $\sk(A_1)$ are contained in $]0,\infty[$.
  Then, from the Mittag-Leffler decomposition of $\psi$ (see~\cite[Proposition~2.2.6]{Fresnel04}), we have $\psi (z) = \sum_{-\infty}^{+\infty} a_n z^n$.
  Moreover, for some $\sigma \in \{+\delta,-\delta\}$ and for all $r$
  such that $|z| = r$ is contained in $A_0$, we have
  that $|\psi(z)| = |a_{\sigma}|r^\sigma$ and $|a_j| r^j < |a_\sigma|r^\sigma$ for all $j \neq  \sigma$ since $\psi$ maps $\sk(A_0)$ onto $\sk(A_1)$ and has degree $\delta$.
  Hence, $$|\psi (z) - a_\sigma z^\sigma| < |a_\sigma|r^\sigma.$$
  Let $\gamma (z) = a_\sigma z^\sigma$. It follows that $\psi(x)=\gamma(x)$ and $T_x \psi = T_x \gamma$ for all $ x \in \sk(A_0)$. 
  \end{proof}


\begin{corollary}
  \label{l:t-criteriumB}
  Let $A_0$ and $A_1$ be two open annuli in $\aoneberk$, and suppose 
  that $\psi_1, \psi_2: A_0 \to A_1$ are surjective analytic maps.
  If there exists a type II point $x_0 \in \sk A_0$ such that $\psi_1(x_0) = \psi_2(x_0)$ and
  $T_{x_0} \psi_1 = T_{x_0} \psi_2$, then $\psi_1(x) = \psi_2(x)$ and
  $T_{x} \psi_1 = T_{x} \psi_2$ for all $x \in \sk A_0$. 
\end{corollary}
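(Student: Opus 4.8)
The plan is to reduce the statement to the rigidity provided by Lemma~\ref{l:constant-skeleton}, applied to both $\psi_1$ and $\psi_2$. Since $\psi_1, \psi_2 : A_0 \to A_1$ are surjective analytic maps of annuli, each has a well-defined degree; call them $\delta_1$ and $\delta_2$. Applying Lemma~\ref{l:constant-skeleton} to $\psi_i$ produces $\sigma_i \in \{+\delta_i, -\delta_i\}$, a nonzero constant $a_i \in \C_v$, and, after fixing base points $c \in \aone$ in the bounded complementary component of $A_0$ and $b \in \aone$ in the bounded complementary component of $A_1$, a map $\gamma_i(z) = a_i(z-c)^{\sigma_i} + b$ such that $\psi_i(x) = \gamma_i(x)$ and $T_x \psi_i = T_x \gamma_i$ for all $x \in \sk(A_0)$.

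First I would extract from the hypothesis at the single type II point $x_0 \in \sk A_0$ that $\gamma_1$ and $\gamma_2$ in fact agree along the whole skeleton. From $\psi_1(x_0) = \psi_2(x_0)$ and $T_{x_0}\psi_1 = T_{x_0}\psi_2$ we get $\gamma_1(x_0) = \gamma_2(x_0)$ and $T_{x_0}\gamma_1 = T_{x_0}\gamma_2$. Since $x_0$ is of type II, the local degree of $\gamma_i$ at $x_0$ equals the degree of the tangent map $T_{x_0}\gamma_i$; because $\sk A_0$ is mapped onto $\sk A_1$ by each $\psi_i$ and $x_0$ lies on $\sk A_0$, the local degree of $\gamma_i$ at $x_0$ is $|\sigma_i| = \delta_i$. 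Hence $\delta_1 = \delta_2 =: \delta$. Moreover, the sign $\sigma_i$ is detected by the direction of $\sk A_0$ in which the diameter decreases under $\psi_i$ (equivalently, by the slope of $\psi_i$ on $\sk A_0$ in the hyperbolic metric via Lemma~\ref{lem:factor}); since $T_{x_0}\psi_1 = T_{x_0}\psi_2$ carries the two directions of $\sk A_0$ at $x_0$ to the same directions of $\sk A_1$, we conclude $\sigma_1 = \sigma_2 =: \sigma$. It remains to see $a_1 = a_2$: evaluating $|\gamma_i|$ at $x_0 = x_{c, r_0}$ gives $|\gamma_1|(x_0) = |a_1| r_0^{\sigma} = |a_2| r_0^{\sigma} = |\gamma_2|(x_0)$, so $|a_1| = |a_2|$, and matching the residual data in $T_{x_0}\gamma_1 = T_{x_0}\gamma_2$ (the tangent map of $z \mapsto a_i(z-c)^{\sigma}+b$ at the type II point $x_0$ records $a_i$ modulo the maximal ideal, up to the $\sigma$-th power scaling) pins down $a_1 = a_2$. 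Therefore $\gamma_1 = \gamma_2 =: \gamma$.

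Having $\gamma_1 = \gamma_2 = \gamma$, the conclusions $\psi_1(x) = \gamma(x) = \psi_2(x)$ and $T_x\psi_1 = T_x\gamma = T_x\psi_2$ for all $x \in \sk(A_0)$ follow immediately from the two instances of Lemma~\ref{l:constant-skeleton}. I expect the main obstacle to be the last point of the previous paragraph: carefully justifying, from the equality of tangent maps at a single type II point together with the common value $\psi_1(x_0) = \psi_2(x_0)$, that the ambiguous constants $a_i$ genuinely coincide rather than merely having the same absolute value. The cleanest way to handle this is to observe that $\gamma_1^{-1} \circ \gamma_2$ (where defined near $x_0$) is an analytic map fixing $x_0$ with trivial tangent map at $x_0$ and of local degree $1$, hence equals the identity on the skeleton near $x_0$; since $\gamma_1, \gamma_2$ are rational maps, agreement on an open subset of the skeleton forces $\gamma_1 = \gamma_2$ identically. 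Alternatively, one can just note that $\psi_1$ and $\psi_2$ are analytic maps on the annulus $A_0$ that coincide in a neighborhood of $x_0$ (since both coincide with $\gamma_1 = \gamma_2$ there after the first, easier step), and invoke the identity principle for analytic maps on connected opens stated in \S~\ref{sec:berkovich-space} to conclude $\psi_1 = \psi_2$ on all of $A_0$, which is stronger than what is asked.
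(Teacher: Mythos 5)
Your opening strategy matches the paper's: apply Lemma~\ref{l:constant-skeleton} to each of $\psi_1,\psi_2$, use the hypothesis at the single point $x_0$ to compare the resulting monomial models $\gamma_i(z)=a_i(z-c)^{\sigma_i}+b$, and read off the conclusion along $\sk(A_0)$. Up through the deductions $\delta_1=\delta_2$, $\sigma_1=\sigma_2$ and $|a_1|=|a_2|$, and the observation that $T_{x_0}\gamma_1=T_{x_0}\gamma_2$ forces equality of the residues of $a_1,a_2$, your reasoning is sound and closely parallels the paper's.

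The gap is the jump from ``$a_1$ and $a_2$ have the same absolute value and same residue'' to ``$a_1=a_2$,'' and hence to $\gamma_1=\gamma_2$ and $\psi_1=\psi_2$. This is false, and both of your attempted justifications fail for the same underlying reason. The tangent map of $z\mapsto a(z-c)^{\sigma}+b$ at a type II point only records $a$ modulo the maximal ideal of the valuation ring, so it cannot ``pin down'' $a$. And your identity-principle argument does not apply: an analytic map that restricts to the identity on a piece of the skeleton of $\H$ need not be the identity, because such a piece is not open in the Gel'fand topology. Concretely, take any $\mu\in\field$ with $|\mu-1|<1$, $\mu\neq 1$, and consider $\psi_1(z)=z^{2}$, $\psi_2(z)=\mu z^{2}$ on an annulus $A_0$ around $x_G$. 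Both map $A_0$ onto the same annulus $A_1$, fix $x_G$, and have identical tangent maps at $x_G$ (since $\tilde\mu=1$); yet $\psi_1\neq\psi_2$ and $a_1=1\neq\mu=a_2$. So your claimed conclusion ``$\psi_1=\psi_2$ on all of $A_0$'' is strictly stronger than the corollary and is in fact not true.

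Fortunately the facts you have already established suffice, and this is exactly what the paper does: with $\sigma_1=\sigma_2=\sigma$ and $|a_1-a_2|<|a_1|=|a_2|$, one computes for all $x\in\sk(A_0)$ (where $|(z-c)|_x=r$, say)
\begin{equation*}
  \bigl|\gamma_1(z)-\gamma_2(z)\bigr|_x=|a_1-a_2|\,r^{\sigma}<|a_1|\,r^{\sigma}=\bigl|\gamma_1(z)-b\bigr|_x,
\end{equation*}
which immediately gives $\gamma_1(x)=\gamma_2(x)$ in $\aoneberk$ and $T_x\gamma_1=T_x\gamma_2$, and therefore $\psi_1(x)=\psi_2(x)$ and $T_x\psi_1=T_x\psi_2$ for all $x\in\sk(A_0)$. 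In short: drop the claim $a_1=a_2$, keep the estimate, and the conclusion follows.
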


\begin{proof}
Modulo affine maps in the domain and the target, we can assume that $\sk(A_0)$ and $\sk(A_1)$ are contained in $]0,\infty[$ and $x_0=x_G=\psi_1(x_0) = \psi_2(x_0)$. Since the point $0$ is contained in both the bounded component of $\aoneberk\setminus A_0$ and the bounded component of $\aoneberk\setminus A_1$, by Lemma \ref{l:constant-skeleton}, there exists $\gamma_1(z)=az^\delta$ and $\gamma_2(z)=a'z^{\delta'}$ such that $\gamma_1(x)=\psi_1(x)$, $\gamma_2(x)=\psi_2(x)$ and $T_{x}\gamma_1=T_{x}\psi_1$, $T_{x}\gamma_2=T_{x}\psi_2$ for all $x\in\sk(A_0)$. Since $x_0=x_G\in\sk(A_0)$ and $\gamma_1(x_0)=x_0=\gamma_2(x_0)$, we obtain $|a-a'|<|a|=|a'|=1$ and $\delta=\delta'$. Then for any $r$ with $|z|=r$ contained in $A_0$, we have
$$|\gamma_1(z)-\gamma_2(z)|=|(a-a')z^\delta|<|\gamma_1(z)|.$$
It follows that $|(\gamma_1 -\gamma_2)(x)|<|\gamma_1(x)|$ and $T_x\gamma_1 = T_x\gamma_2$ for all $ x \in \sk(A_0)$. Thus the conclusion holds.
\end{proof}


We will also need the following observation:

\begin{lemma}
  \label{l:constant-ball}
  Suppose that $\psi_1, \psi_2: U \to \aoneberk$
   are analytic maps defined in a neighborhood $U$ of a type $II$ point $x_0$.
  If $\psi_1 (x_0) = \psi_2(x_0)$ and $T_{x_0} \psi_1 = T_{x_0} \psi_2$, then there exists $\varepsilon >0$ such that for all $x\in U$ with $\dist (x, x_0) < \varepsilon$ we have that $\psi_1(x)=\psi_2(x)$ and
  $T_x \psi_1 = T_x\psi_2 $. 
\end{lemma}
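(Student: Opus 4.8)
The plan is to reduce to the Gauss point, replace $\psi_1,\psi_2$ near $x_0$ by rational maps in lowest terms, and then observe that the hypothesis $T_{x_0}\psi_1=T_{x_0}\psi_2$ is really a \emph{quantitative} smallness statement that survives small hyperbolic perturbation by continuity of the relevant functions for $\dist$.

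First I would normalize. Since $x_0$ and $y_0:=\psi_1(x_0)=\psi_2(x_0)$ are type II, choosing affine maps $A,A'$ with $A(x_0)=x_G=A'(y_0)$ and replacing each $\psi_i$ by $A'\circ\psi_i\circ A^{-1}$ — which is again analytic and, as $A$ is a $\dist$-isometry with invertible tangent maps, preserves both hypotheses and conclusion — I may assume $x_0=x_G=\psi_1(x_0)=\psi_2(x_0)$. Shrinking $U$, fix an affinoid neighborhood $W\subseteq U$ of $x_0$ on which $\psi_i$ agrees with a rational map $R_i$ whose poles lie outside $W$, and write $R_i=P_i/Q_i$ with $P_i,Q_i\in\field[z]$ coprime, normalized so that $\max\{\|P_i\|_{x_0},\|Q_i\|_{x_0}\}=1$. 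From $\psi_i(x_0)=x_0$ one gets $\|P_i\|_{x_0}=\|Q_i\|_{x_0}=1$ — otherwise the reduction of $R_i$ would be $0$ or $\infty$ and $\psi_i$ would not fix $x_0$ — and that the reduction $\widetilde R_i:=T_{x_0}\psi_i=\widetilde P_i/\widetilde Q_i$ is a non-constant rational map over the residue field. Hence $\widetilde R_1=\widetilde R_2$ gives $\widetilde P_1\widetilde Q_2=\widetilde P_2\widetilde Q_1$, that is
\[
  \|P_1Q_2-P_2Q_1\|_{x_0}\;<\;\|P_1Q_2\|_{x_0}\;=\;\|Q_1\|_{x_0}\|Q_2\|_{x_0}\,\diam(R_2(x_0))\;=\;1 .
\]

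Now set $D:=P_1Q_2-P_2Q_1$. For $x\in W\cap\H$ I claim that the inequality
\[
  \|D\|_x\;<\;\|Q_1\|_x\,\|Q_2\|_x\,\diam(R_2(x))
\]
implies $R_1(x)=R_2(x)$: since a point of $\aoneberk$ is determined by the function $c\mapsto\|z-c\|_{(\cdot)}$, it is enough to check $\|R_1(z)-c\|_x=\|R_2(z)-c\|_x$ for all $c\in\field$, and this follows from the identity $(P_1-cQ_1)Q_2=(P_2-cQ_2)Q_1+D$ (the $D$-term being independent of $c$) together with $\inf_c\|P_2-cQ_2\|_x=\|Q_2\|_x\diam(R_2(x))$, which force $\|(P_1-cQ_1)Q_2\|_x=\|(P_2-cQ_2)Q_1\|_x$ and hence the claim after dividing by $\|Q_1\|_x\|Q_2\|_x$. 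Both sides of the displayed inequality are $\dist$-continuous and positive on $\H$ (the norms $x\mapsto\|P\|_x$ of polynomials, $\diam$, and $x\mapsto R_2(x)$ are all $\dist$-continuous), and by the previous paragraph the inequality holds at $x_0$; therefore it holds on a hyperbolic ball $B_{<\varepsilon}(x_0)$, which we may also take inside $W$. Consequently $\psi_1=\psi_2$ on $B_{<\varepsilon}(x_0)$.

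It remains to deduce agreement of tangent maps. Given $x\in B_{<\varepsilon/2}(x_0)$ and a direction $\vec u$ at $x$, choose $x'\in D_x(\vec u)$ with $\dist(x,x')$ so small that $x'\in B_{<\varepsilon}(x_0)$ and $\psi_i(x')\neq\psi_i(x)$ for $i=1,2$ (possible since each fibre $\psi_i^{-1}(\psi_i(x))$ is locally finite near $x$); then $\psi_1(x')=\psi_2(x')$, and as this common point lies in $D_{\psi_i(x)}(T_x\psi_i(\vec u))$ and differs from $\psi_i(x)$, we get $T_x\psi_1(\vec u)=T_x\psi_2(\vec u)$; since $\vec u$ was arbitrary, $T_x\psi_1=T_x\psi_2$. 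Replacing $\varepsilon$ by $\varepsilon/2$ then proves the lemma. The step I expect to be the most delicate is the set-up in the second paragraph — choosing the affinoid $W$ inside $U$ and the \emph{primitive} presentation $R_i=P_i/Q_i$ at the type II centre $x_0$ so that $\|P_i\|_{x_0}=\|Q_i\|_{x_0}=1$ and $\widetilde R_i$ is exactly the tangent map — together with the $\dist$-continuity of the two sides of the key inequality; once these are in place the proof is essentially the remark that first-order agreement at $x_0$ is a strict ultrametric inequality that is stable under small hyperbolic perturbation.
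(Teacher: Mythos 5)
Your proof is correct and is essentially the paper's argument unwound: after clearing denominators, your inequality $\|D\|_x < \|Q_1\|_x\|Q_2\|_x\,\diam(R_2(x))$ is exactly the paper's $|\psi_1(z)-\psi_2(z)|_x < \diam(\psi_1(x))$, and both proofs rest on the observation that this strict ultrametric inequality is $\dist$-open and holds at $x_0$ precisely because $\psi_1(x_0)=\psi_2(x_0)$ and $T_{x_0}\psi_1=T_{x_0}\psi_2$. Two minor remarks: to trade $\psi_i$ for a genuine rational map you should restrict to a bounded hyperbolic ball rather than an affinoid (on an affinoid an analytic map is only a uniform limit of rational functions, while on a bounded hyperbolic ball its action agrees exactly with one of them, as noted in \S\ref{sec:berkovich-space}); and your final paragraph, recovering $T_x\psi_1=T_x\psi_2$ by evaluating at a nearby point $x'$ with $\psi_i(x')\neq\psi_i(x)$, spells out a step the paper treats as immediate.
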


\begin{proof}
  Let $y_0 = \psi_1(x_0) = \psi_2(x_0)$. Taking into account that $x_0$ is a type II point,
  given such  maps $\psi_1, \psi_2$ we have that
  $|\psi_1(z) - \psi_2(z)|_{x_0} < \operatorname{diam} (y_0)$. Since $\operatorname{diam} (y)$ is continuous with respect to
    the metric topology in $\H$ and $|\psi_1(z) - \psi_2(z)|_{x}$ is continuous in the Berkovich topology which is weaker than the metric topology, it follows
    that $|\psi_1(z) - \psi_2(z)|_{x} < \operatorname{diam} (\psi_1(x))$ for all $x$ in a $\H$-metric disk around $x_0$. Hence, $\psi_2(x) = \psi_1(x)$ and
        $T_x \psi_2 = T_x \psi_1$ for all $x$ in this $\dist$-disk. 
\end{proof}

Analytic isomorphisms between open and connected sets which agree with a translation in a neighborhood of an skeleton point of the domain, are locally a translation at all skeleton points:

\begin{proposition}
  \label{p:iso-adm}
  For $b \in \field$, let $\tau_b(z) = z+b$. 
  Consider two open and connected sets $U_0, U_1\subsetneq \aoneberk$.
  Assume that  $\psi: U_0 \to U_1$ is an analytic isomorphism such that $\psi(x_0)=\tau_b(x_0)$ and 
  $T_{x_0} \psi = T_{x_0} \tau_b$ for some $x_0 \in \sk U_0$ and
  some $b \in \field$.
  Then for any $x\in \sk U_0$, there exist $b_x\in \field$ and a neighborhood $V\subset \sk U_0$ of $x$ such that for all $y\in V$, $\psi(y)=\tau_{b_x}(y)$ and
  $T_{y} \psi = T_{y} \tau_{b_x}$.
\end{proposition}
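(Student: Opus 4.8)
The approach is a connectedness argument along the tree $\sk U_0$, propagating from edge to edge the property of coinciding locally with a translation. Assume first that $x_0$ is of type II (this is the situation in the applications, and it is what makes Lemma~\ref{l:constant-ball} available at $x_0$). Let $S\subset\sk U_0$ be the set of points $x$ such that $\psi$ coincides, in value and tangent map, with some translation $\tau_{b_x}$ on some neighborhood $V_x$ of $x$ in $\sk U_0$; here the translation is allowed to depend on $x$. The statement to be proved is precisely $S=\sk U_0$. By construction $S$ is open. It is nonempty because $x_0\in S$: Lemma~\ref{l:constant-ball} applied to $\psi$ and $\tau_b$ at the type II point $x_0$ gives coincidence on a $\dist$-ball about $x_0$, which meets $\sk U_0$ in a neighborhood of $x_0$. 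Since $\sk U_0$, being a locally finite open subtree of $\H$, is connected, it remains to prove that $S$ is closed in $\sk U_0$.

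Let $x\in\overline S$. The first step is an \emph{edge analysis}. Pick an edge $e$ of $\sk U_0$ one of whose directions at $x$ is accumulated by $S$ (if $x$ is interior to an edge, take that edge), a type II point $q\in S\cap e$ close to $x$, and an open annulus $A$ whose skeleton is a sub-segment of $e$ containing $q$ and reaching up to $x$. Such an $A$ is automatically contained in $U_0$, since an interior point of an edge of $\sk U_0$ carries, besides its two skeleton directions, only ``dead-end'' directions whose disks lie in $U_0$. As $\psi$ is an isomorphism, $\psi|_A$ is a surjective degree one analytic map onto the annulus $\psi(A)$, so by Lemma~\ref{l:constant-skeleton} it coincides on $\sk A$, in value and tangent map, with a map $\gamma(z)=a(z-c)^{\pm1}+b'$. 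Comparing with $\tau_{b_q}$ at $q$ forces the exponent to be $+1$ and $|a|=1$ with $\overline a=1$ in the residue field; a short computation then shows that $\gamma$, hence $\psi$, coincides on all of $\sk A$ --- in value and tangent map --- with the translation $\tau_\beta$, $\beta:=b'$. In particular $\psi=\tau_\beta$ (in value and tangent map) on a half-open sub-segment $]p,x[$ of $e$ abutting $x$, and by continuity $\psi(x)=\tau_\beta(x)$. If $x$ is interior to $e$ this already witnesses $x\in S$; so assume $x$ is a branch point, necessarily of type II.

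The second step --- the \emph{passage to the endpoint} --- is where the isomorphism hypothesis does essential work: we must still verify that $T_x\psi=T_x\tau_\beta$. Put $\Theta:=\tau_\beta^{-1}\circ\psi$, analytic near $x$; then $\Theta$ fixes $]p,x[$ pointwise with identity tangent map, and $\Theta(x)=x$. In coordinates in which $x$ is the Gauss point and $]p,x[$ lies on the segment $]0,\infty[$, write the Laurent expansion $\Theta(z)=\sum_{n\in\Z}c_nz^n$. The requirements that $\Theta$ fix each type II point $x_{0,t}$ with $t<1$ and have identity tangent map there translate into $\overline{c_1}=1$ together with $|c_n|<t^{1-n}$ for all $n\ne1$ and all such $t$; since $|\field^\times|$ is dense in $\R_{>0}$, this already forces $|c_n|<1$ for every $n\le1$. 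On the other hand $\Theta$ is a local isomorphism at the type II point $x$, so $\deg_x\Theta=1$, and the reduction $T_x\Theta$, as a rational map over the residue field, equals $\zeta\mapsto\zeta+\sum_{n\ge2,\,|c_n|=1}\overline{c_n}\,\zeta^n$; degree one forces $|c_n|<1$ for $n\ge2$ as well, whence $T_x\Theta=\mathrm{id}$. Therefore $\psi$ coincides with $\tau_\beta$, in value and tangent map, at $x$; since $x$ is of type II, Lemma~\ref{l:constant-ball} upgrades this to coincidence on a $\dist$-ball about $x$, and intersecting with $\sk U_0$ gives $x\in S$. Hence $S$ is closed and $S=\sk U_0$.

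The step I expect to be the crux is this passage to the endpoint: everything before it amounts to bookkeeping with the two annulus lemmas, but upgrading ``$\psi$ coincides with a translation along an edge'' to ``$\psi$ coincides with that translation at the branch point terminating the edge'' genuinely uses that $\psi$ is an isomorphism --- for a merely analytic self-map the tangent map at the endpoint could acquire higher-order terms, and indeed the statement becomes false if $x_0$ is only assumed to be of type III. The remaining technical points --- that interior points of edges of $\sk U_0$ carry only dead-end directions, that $\sk U_0$ carries its metric topology because it is locally finite, and that the Laurent expansion of $\psi$ on a sub-annulus of $e$ converges at the type II endpoint --- are routine consequences of the material recalled in \S\ref{sec:berkovich-space}.
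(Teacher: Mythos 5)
Your proof is correct and follows the same connectedness strategy as the paper: define the locus $S\subset\sk U_0$ where $\psi$ agrees locally with a translation in value and tangent map, then show $S$ is nonempty, open, and closed. Where you genuinely improve on the paper's one-paragraph argument is in the \emph{passage to the endpoint}. The paper attributes the closedness of $\Gamma$ to Lemma~\ref{l:constant-ball}, but that lemma only propagates agreement \emph{once} both $\psi(x')=\tau(x')$ and $T_{x'}\psi=T_{x'}\tau$ are known at a branch point $x'$; the continuity of $\psi$ along an approaching edge supplies the first equality but not the second. Your reduction argument for $\Theta=\tau_\beta^{-1}\circ\psi$ --- reading off the Laurent coefficients from the constraint that $\Theta$ fixes $x_{0,t}$ with identity tangent map for all $t$ near $1$, and then invoking $\deg_x\Theta=1$ to kill the degree-$\ge2$ terms in the reduction --- is exactly the missing step, and it correctly pinpoints where the isomorphism hypothesis is used (as your $z\mapsto z+z^2$ discussion implicitly warns). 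Your observation that the statement needs $x_0$ to be of type~II is also correct and worth making explicit: the map $\psi(z)=az$ with $|a|=1$, $\overline a\ne1$ on $U_0=U_1=\aoneberk\setminus\{0\}$ satisfies the hypotheses at any type~III $x_0\in\,]0,\infty[$ but fails the conclusion at the Gauss point, so the paper's proof --- which, like yours, anchors nonemptiness of the clopen set at $x_0$ via Lemma~\ref{l:constant-ball} --- silently relies on $x_0$ being of type~II, consistent with every application (B\"ottcher coordinate change near $\infty$).

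Two small slips that do not affect the argument: you write ``$|c_n|<1$ for every $n\le1$'' when the case $n=1$ gives $|c_1|=1$ with $\overline{c_1}=1$; the strict bound $|c_n|<1$ is only for $n\le0$, and $|c_n|<1$ for $n\ge2$ comes separately from the degree count. Also $\beta$ should be the constant term $b'-ac$ of $\gamma$ rather than $b'$ unless you normalize $c=0$, which you may do after the affine change of coordinates placing $\sk A$ on $]0,\infty[$.
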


\begin{proof}
  Let $\Gamma \subset \sk U_0$ be the set formed by  all $x \in \sk U_0$
  with the following property:
  there exist $b_x\in\field$ and a neighborhood $V\subset \sk U_0$ of $x$ such that for
  all $y \in V$ we have $\psi(y)=\tau_{b_x}(y)$ and $T_y \psi = T_y \tau_{b_x}$. By Lemma~\ref{l:constant-ball}, if a branch point $x'\in\sk U_0$ is contained in $\Gamma$ (resp. $\sk U_0\setminus \Gamma$), then there exists an open hyperbolic ball  $V'\subset U_0$ around $x'$ such that $V'\cap \sk U_0$ is contained in $\Gamma$ (resp.
  in $(\sk U_0) \setminus \Gamma$). Given any two branch points $x_1$ and $x_2$ of $\sk U_0$ such that $]x_1,x_2[$ contains no branch point of $\sk U_0$, by Lemma~\ref{l:constant-skeleton} and \ref{l:constant-ball}, the segment $]x_1,x_2[$ is either contained in $\Gamma$ or in $(\sk U_0) \setminus \Gamma$. Therefore, $\Gamma$ is a non-empty clopen  subset of $\sk U_0$ and the proposition follows.
\end{proof}

Given a tame polynomial $f \in \Poly_d$, 
recall from Section \ref{sec:poly} that the dynamical core
$\cA_f$ is  contained in the skeleton of the open and connected set $\cB_0(f)$.

\begin{corollary}
  \label{c:ithr-D-easy}
  Let $f, g\in\Polyd$ be two tame polynomials. Assume that $\psi: \cB (f) \to \cB(g)$ is an analytic conjugacy between $f: \cB (f) \to \cB(f)$ and $g: \cB (g) \to \cB(g)$ that extends the corresponding B\"ottcher coordinate change.
  Then $\psi: \cA_f \to \cA_g$ is an extendable conjugacy between $f: \cA_f \to \cA_f$ and $g: \cA_g \to \cA_g$.
\end{corollary}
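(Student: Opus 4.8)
The plan is to verify the three conditions of Definition \ref{def:conjugacy} for the restriction $\psi|_{\cA_f}$, after first checking that it actually maps $\cA_f$ onto $\cA_g$. For the latter, recall that $\cR_\infty(f) = (\cR(f)\cap\cB(f))\setminus\crit(f)$ consists of the non-classical ramification points of $f$ in $\cB(f)$; since $\psi$ is a conjugacy which is an analytic isomorphism $\cB(f)\to\cB(g)$, it carries the local-degree function to the local-degree function (analytic isomorphisms have local degree $1$ everywhere and conjugate $f$ to $g$), hence $\psi(\cR(f)\cap\cB(f)) = \cR(g)\cap\cB(g)$; moreover $\psi$ maps type-I points to type-I points, so it carries $\crit(f)\cap\cB(f)$ to $\crit(g)\cap\cB(g)$, and therefore $\psi(\cR_\infty(f)) = \cR_\infty(g)$. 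Since $\psi\circ f = g\circ\psi$, applying $\psi$ to $\cA_f = \bigcup_{j\ge 0} f^{\circ j}(\cR_\infty(f))$ gives $\psi(\cA_f) = \bigcup_{j\ge 0} g^{\circ j}(\cR_\infty(g)) = \cA_g$. Condition (1) (conjugacy) is then immediate since it holds on all of $\cB(f)$ and $\cA_f, \cA_g$ are forward invariant, and condition (2) (B\"ottcher coordinate change) is part of the hypothesis that $\psi$ extends $\phi_g^{-1}\circ\phi_f$ near $\infty$.

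The substantive points are that $\psi|_{\cA_f}$ is a $\dist$-isometry and that it is locally a translation. For ``locally a translation,'' I would invoke Proposition \ref{p:iso-adm}: near $\infty$, $\psi$ agrees with $\phi_g^{-1}\circ\phi_f$, and since both B\"ottcher coordinates are tangent to the identity at $\infty$ (the germ normalization $\phi(z)/z\to 1$), this coordinate change agrees with the identity translation $\tau_0(z)=z$ near $\infty$ on the skeleton; in particular there is a type-II skeleton point $x_0$ of $\cB_0(f)$ near $\infty$ with $\psi(x_0) = \tau_0(x_0)$ and $T_{x_0}\psi = T_{x_0}\tau_0$. Here I use that $\sk\cB_0(f)$ contains the segment $]x_f,\infty[$ near $\infty$ (postcritical points lie in $\ol{D}_{x_f}$, so $\cB_0(f)\supset\aoneberk\setminus\ol{D}_{x_f}$). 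Applying Proposition \ref{p:iso-adm} to the analytic isomorphism $\psi: \cB_0(f)\to\cB_0(g)$ — noting $\psi$ maps $\cB_0(f)=\cB(f)\setminus\post(f)$ onto $\cB_0(g)=\cB(g)\setminus\post(g)$ because it conjugates the maps and matches critical points, hence postcritical sets — yields that $\psi$ is locally a translation at every point of $\sk\cB_0(f)$, and therefore at every point of $\cA_f\subset\sk\cB_0(f)$. That gives condition (3).

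Finally, the $\dist$-isometry property: being locally a translation, $\psi$ is locally a $\dist$-isometry at each $x\in\cA_f$ (translations preserve $\dist$), so by connectedness of the tree $\cA_f$ it is a global $\dist$-isometry onto its image. The main obstacle — really the only place care is needed — is the bookkeeping that $\psi$ genuinely restricts to an \emph{isomorphism} $\cB_0(f)\to\cB_0(g)$ and to a bijection $\cA_f\to\cA_g$, i.e. that ramification data, type-I/II classification, and postcritical sets are all transported correctly by the analytic conjugacy; once this is in place, the three conditions follow formally from Propositions \ref{p:iso-adm}, Lemma \ref{l:constant-ball}, and the normalization of the B\"ottcher coordinate at $\infty$.
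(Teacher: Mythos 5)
Your proposal is correct and follows essentially the same route as the paper: the paper likewise reduces to verifying that $\psi$ is locally a translation on $\cA_f$ and obtains this by observing that $\psi$ sends critical points in $\cB(f)$ to critical points in $\cB(g)$, hence restricts to an analytic isomorphism $\cB_0(f)\to\cB_0(g)$, and then applying Proposition~\ref{p:iso-adm}. You supply somewhat more detail — explicitly checking that $\psi(\cA_f)=\cA_g$, spelling out the $\dist$-isometry, and noting the normalization of the B\"ottcher coordinate near $\infty$ — but these are exactly the points the paper leaves implicit, and your treatment of them is sound.
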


\begin{proof}
  By Definition \ref{def:conjugacy}, it suffices to show $\psi$ is locally a translation on $\cA_f$. Observe that $c$ is a critical point
  of $f$ in $\cB(f)$ if and only if  $\psi(c)$ is a critical point
  of $g$ in $\cB(g)$. Hence, $\psi:\cB_0(f)\to\cB_0(g)$ is an analytic isomorphism. Since $\cA_f \subset \sk\cB_0(f)$, by Proposition \ref {p:iso-adm}, $\psi$ is locally a translation on $\cA_f$.
\end{proof}

Corollary~\ref{c:ithr-D-easy} establishes one direction of Theorem~\ref{ithr:D}.

\subsection{From extendable to analytic conjugacies}
\label{s:maps-annuli}

In this subsection, we discuss results about maps between open sets and prove the remaining direction of Theorem~\ref{ithr:D}. 

Recall that $p \ge 0$ denotes the characteristic of the residue field of $\field$.
We begin with the following well-known facts about prime-to-$p$ \'etale maps between annuli. Proofs are supplied for the sake of completeness.

\begin{lemma}
  \label{l:isomorphic-d}
Let $\delta \ge 1$ be an integer not divisible by $p$, and  let $A_0 = \{ r^{1/\delta} < |z| < s^{1/\delta} \}$ and $A_1 = \{ r < |z| < s \}$ be two open annuli in $\aoneberk$ for some $0 < r < s$. Consider an analytic map  $\psi: A \to A_1$ of degree $\delta$,
  where $A \subset \aoneberk$ is an open annulus.
  Then there exists an analytic isomorphism $\varphi: A \to A_0$
  such that $\varphi (z)^\delta = \psi(z)$. Moreover, $\varphi$ is unique up to multiplication by $\mu \in \field$ where $\mu^\delta =1$.
\end{lemma}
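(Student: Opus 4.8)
The plan is to use the Mittag--Leffler decomposition of $\psi$ to extract a $\delta$-th root explicitly. First I would normalize: after translating in the domain and range so that the bounded complementary components contain $0$, the map $\psi: A \to A_1$ can be written as a Laurent series $\psi(z) = \sum_{n} a_n z^n$ convergent on $A$. Since $\psi$ has degree $\delta$ onto $A_1$ and $A_1$ is centered at $0$, the argument from Lemma~\ref{l:constant-skeleton} shows that there is a dominant term $a_\sigma z^\sigma$ with $\sigma \in \{+\delta, -\delta\}$; replacing $z$ by $1/z$ in the domain if necessary, we may assume $\sigma = +\delta$, so $\psi(z) = a_\delta z^\delta (1 + u(z))$ where $u(z) = \sum_{n \neq \delta} (a_n/a_\delta) z^{n-\delta}$ satisfies $\|u\|_x < 1$ for every $x \in \sk A$, hence $\|u\| < 1$ on a suitable subannulus (in fact on all of $A$, by the maximum principle applied on each circle).

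Next I would take the root. Since $p \nmid \delta$, the binomial series $(1+w)^{1/\delta} = \sum_k \binom{1/\delta}{k} w^k$ has $p$-adic integer coefficients, so it converges and defines an analytic function on $\{\|w\| < 1\}$; composing with $u$ gives an analytic function $(1+u(z))^{1/\delta}$ on $A$ with $\|(1+u)^{1/\delta} - 1\| < 1$. Also $a_\delta$ has a $\delta$-th root $\alpha$ in $\field$ because $\field$ is algebraically closed. Then I would set
$$\varphi(z) := \alpha \, z \, (1+u(z))^{1/\delta},$$
which is analytic on $A$ and satisfies $\varphi(z)^\delta = a_\delta z^\delta (1+u(z)) = \psi(z)$. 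To see $\varphi$ is an isomorphism onto $A_0$: the leading term of $\varphi$ is $\alpha z$ with $|\alpha|^\delta = |a_\delta|$, and from $|\psi(z)| = |a_\delta| |z|^\delta$ on each circle of $\sk A$ one reads off that $\varphi$ maps $\sk A$ isometrically onto $\sk A_0 = \{r^{1/\delta} < |z| < s^{1/\delta}\} \cap \H$; since $\varphi$ has degree $1$ (its tangent map at skeleton points is that of $z \mapsto \alpha z$, hence degree $1$), it is injective, and since its image is an annulus with the correct skeleton it is exactly $A_0$. Thus $\varphi: A \to A_0$ is an analytic isomorphism.

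For uniqueness: if $\varphi_1, \varphi_2: A \to A_0$ both satisfy $\varphi_i^\delta = \psi$, then $(\varphi_1/\varphi_2)^\delta = 1$ on $A$, so the analytic function $\varphi_1/\varphi_2$ takes values in the finite set $\mu_\delta$ of $\delta$-th roots of unity; being continuous on the connected set $A$ it is a constant $\mu$ with $\mu^\delta = 1$. (Here one uses that $\varphi_2$ is nonvanishing on $A$, which holds because $\varphi_2(A) = A_0 \subset \aoneberk \setminus \{0\}$, so $\varphi_1/\varphi_2$ is a well-defined analytic function.)

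I expect the main obstacle to be the bookkeeping that makes ``$\psi$ has degree $\delta$ onto $A_1$'' yield both the single dominant Laurent coefficient and the estimate $\|u\| < 1$ globally on $A$ rather than merely on the skeleton; this is exactly the content extracted in Lemma~\ref{l:constant-skeleton}, so I would either cite it or reprove the one-line Newton-polygon observation that on each circle $|z| = t \subset A$ the term $a_\delta z^\delta$ strictly dominates. The convergence of the binomial series in residue characteristic $p$ when $p \nmid \delta$ is standard (the coefficients $\binom{1/\delta}{k}$ lie in $\Z_{(p)}$, hence have absolute value $\le 1$), but it is the one place where the hypothesis $p \nmid \delta$ is genuinely used and should be stated carefully.
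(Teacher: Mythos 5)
Your proof is correct and takes essentially the same approach as the paper: both use the Mittag--Leffler/Laurent decomposition to identify the single dominant term $a_\sigma(z-a)^\sigma$ with $\sigma = \pm\delta$, factor $\psi$ as $a_\sigma(z-a)^\sigma(1+\varepsilon)$ with $|\varepsilon|<1$, and extract a $\delta$-th root of $1+\varepsilon$, with the hypothesis $p \nmid \delta$ used precisely to guarantee that root exists (your binomial-series argument is just the explicit form of the paper's appeal to the ``unique $\gamma$ with $\gamma^\delta = 1+\varepsilon$, $|\gamma-1|<1$''). Your spelled-out uniqueness argument via the openness of nonconstant analytic maps is a useful addition, as the paper's proof simply ends with ``the lemma follows.''
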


\begin{proof}
  Write $A = \{ r' < |z-a| < s' \}$ for some $a\in\field$. Then there exists $b_n\in\field$ such that
  $$\psi(z) = \sum_{-\infty}^{+\infty} b_n (z-a)^n$$
  where, for all  $r' < t < s'$, either $|b_\delta| t^\delta > |a_n| t^n$ for all $n \neq \delta$  or $|b_{-\delta}| t^{-\delta} > |b_n| t^n$ for all $n \neq -\delta$.
  Without loss of generality, assume the former. Then
  $$\psi(z) = b_\delta (z-a)^\delta (1 + \varepsilon(z))$$ with $|\varepsilon(z)| < 1$ for all $z \in A$. Consider
  $\beta \in \C_v$ such that $\beta^\delta =b_\delta$. Since 
  $p$ does not divide $d$, there exists a
  unique function $\gamma(z)$ such that $\gamma(z)^\delta = 1 + \varepsilon(z)$
  and $|\gamma(z) -1| <1$. Then $\varphi(z) = \beta (z-a) \gamma(z)$ is such that $\varphi(z)^\delta = g(z)$. The lemma follows.  
\end{proof}

\begin{corollary}
  \label{c:identity}
  Suppose that $\psi: A  \to A_1$ is an analytic map of degree $\delta \ge 2$ between two open annuli $A, A_1\subset\aoneberk$ such that 
  $\delta$ is not divisible by $p$. 
  Assume that $\psi_1: A \to A $ is an analytic isomorphism such that the following hold:
  \begin{enumerate}
  \item $$\psi \circ \psi_1 = \psi.$$
  \item $\psi_1(x) = x$ and $T_x \psi_1 = \operatorname{id}$ for some type II point $x$ in $\operatorname{sk}(A)$.
  \end{enumerate}
  Then $\psi_1 = \operatorname{id}$.
\end{corollary}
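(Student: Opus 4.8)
The plan is to use Lemma~\ref{l:isomorphic-d} to realize $\psi$ as a $\delta$-th power, which turns $\psi_1$ into a multiplication $w\mapsto\mu w$ on a standard annulus, and then to squeeze out from condition~(2) and the hypothesis $p\nmid\delta$ that $\mu=1$.

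First I would normalize the target. Writing $A_1=D(a,s)\setminus\ol{D}(a,r)$ and replacing $\psi$ by $z\mapsto\psi(z)-a$ — which changes neither the hypothesis $\psi\circ\psi_1=\psi$ nor the conclusion $\psi_1=\id$ — I may assume $A_1=\{\,r<|z|<s\,\}$ is centered at $0$ (and $\psi$ surjective of degree $\delta$, as in Lemma~\ref{l:isomorphic-d}). Lemma~\ref{l:isomorphic-d} then furnishes an analytic isomorphism $\varphi\colon A\to A_0:=\{\,r^{1/\delta}<|z|<s^{1/\delta}\,\}$ with $\varphi^\delta=\psi$, unique up to multiplication by a $\delta$-th root of unity. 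Since $\varphi\circ\psi_1\colon A\to A_0$ is again an analytic isomorphism and $(\varphi\circ\psi_1)^\delta=\psi\circ\psi_1=\psi$, the uniqueness clause gives $\mu\in\field$ with $\mu^\delta=1$ and $\varphi\circ\psi_1=\mu\cdot\varphi$; equivalently $\psi_1=\varphi^{-1}\circ m_\mu\circ\varphi$, where $m_\mu(w)=\mu w$. As $|\mu|=1$, the map $m_\mu$ is an analytic automorphism of the centered annulus $A_0$.

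Next I would extract the constraint from condition~(2). Put $y:=\varphi(x)$. As $\varphi$ is an analytic isomorphism of annuli, it carries $\sk(A)$ onto $\sk(A_0)\subset\,]0,\infty[$ and preserves the type of points, so $y=x_{0,\rho}$ is a type II point with $\rho\in|\field^\times|$. The relation $\psi_1(x)=x$ yields $m_\mu(y)=y$, which is automatic because $|\mu|=1$; the real information lies in the tangent maps. Functoriality of the tangent map gives $T_x\psi_1=(T_x\varphi)^{-1}\circ T_y m_\mu\circ T_x\varphi$, so $T_x\psi_1=\id$ forces $T_y m_\mu=\id$. Conjugating by a rescaling $w\mapsto w/t$ with $|t|=\rho$ moves $y$ to the Gauss point $x_G$ and leaves $m_\mu$ in the same form; since $m_\mu$ has good reduction at $x_G$, its tangent map there is the induced automorphism $\xi\mapsto\bar\mu\,\xi$ of the projective line over the residue field of $\field$, which sends $\vec{v}_{x_G}(1)$ to $\vec{v}_{x_G}(\mu)$.

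Finally, $T_y m_\mu=\id$ therefore forces $|\mu-1|<1$. Writing $\mu=1+\varepsilon$ with $|\varepsilon|<1$ and expanding $\mu^\delta=1$ gives $\varepsilon\bigl(\delta+\binom{\delta}{2}\varepsilon+\cdots+\varepsilon^{\delta-1}\bigr)=0$; since $p\nmid\delta$ we have $|\delta|=1$ while every other term inside the parentheses has absolute value $<1$, so the parenthetical factor is a unit and hence $\varepsilon=0$, i.e.\ $\mu=1$. Thus $m_\mu=\id$ and $\psi_1=\varphi^{-1}\circ\id\circ\varphi=\id$. I expect the only mildly delicate point to be the bookkeeping with tangent maps — identifying $T_{x_G}m_\mu$ with reduction and keeping the composition orders straight in the functoriality step; the reduction to a centered annulus and the final unit computation are routine, and $p\nmid\delta$ is used both through Lemma~\ref{l:isomorphic-d} and through the resulting injectivity of reduction on the $\delta$-th roots of unity.
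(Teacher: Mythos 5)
Your proof follows the same route as the paper's: normalize the target to a centered annulus, invoke Lemma~\ref{l:isomorphic-d} to write $\psi=\varphi^\delta$ with $\varphi$ unique up to a $\delta$-th root of unity, deduce $\varphi\circ\psi_1=\mu\varphi$ with $\mu^\delta=1$, and use condition~(2) to force $\mu=1$ and hence $\psi_1=\operatorname{id}$. You have merely filled in the details of the final step (why the tangent-map condition forces $\bar\mu=1$ in the residue field and why $p\nmid\delta$ then gives $\mu=1$), which the paper leaves implicit.
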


\begin{proof}
  After translation we may assume $A_1 =
  \{ r < |z| < s \}$. Let
  $A_0$ and $\varphi: A \to A_0 $ be as in Lemma \ref{l:isomorphic-d}. Then $\varphi(z)^\delta=\psi(z)$. It follows that $(\varphi (\psi_1(z)))^\delta =\psi(\psi_1(z))$. By assumption (1), we have $(\varphi (\psi_1(z)))^\delta=\psi(\psi_1(z))= \psi(z)=\varphi(z)^\delta$. Thus $\varphi \circ \psi_1(z) = (\mu \varphi) (z)$ for some
  $\mu\in\field$ with $\mu^\delta =1$.
  By assumption (2), we conclude $\varphi(x)=(\mu\varphi)(x)$ and 
  $T_x \varphi = T_x (\mu\varphi)$. Thus $\mu=1$. It follows that  $\varphi \circ \psi_1 = \varphi$ which implies  $\psi_1 = \operatorname{id}$.
\end{proof}

\begin{corollary}
\label{c:extension-annulus}
Let $A,A'$ and $A_1$ be open annuli in $\aoneberk$. Suppose that $\psi_1: A \to A_1$ and $\psi_2: A' \to A_1$ are
degree $\delta \ge 1$ analytic maps such that 
  $\delta$ is not divisible by $p$. 
Then there exist exactly $\delta$ isomorphisms $\varphi: A \to A'$ such that
$\psi_2\circ \varphi =\psi_1$. Moreover, if $B_1$ is a subannulus of $A_1$ with
$\sk(B_1) \subset \sk(A_1)$, then every isomorphism $\psi: \psi_1^{-1} (B_1) \to
\psi_2^{-1}(B_1)$ such that $\psi_2 \circ \psi =\psi_1$ extends to a unique 
isomorphism $\varphi: A \to A'$ such that $\psi_2\circ \varphi =\psi_1$. 
\end{corollary}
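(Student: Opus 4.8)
\textbf{Proof plan for Corollary~\ref{c:extension-annulus}.}
The plan is to reduce everything to the $\delta$-th power normal form supplied by Lemma~\ref{l:isomorphic-d} and then to count the deck-transformation-type automorphisms, using Corollary~\ref{c:identity} for the rigidity statement. First I would fix coordinates so that $A_1 = \{ r < |z| < s \}$, and apply Lemma~\ref{l:isomorphic-d} to each of $\psi_1$ and $\psi_2$: there are isomorphisms $\varphi_1: A \to A_0$ and $\varphi_2: A' \to A_0$, where $A_0 = \{ r^{1/\delta} < |z| < s^{1/\delta} \}$, with $\varphi_1(z)^\delta = \psi_1(z)$ and $\varphi_2(z)^\delta = \psi_2(z)$. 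Given any isomorphism $\varphi: A \to A'$ with $\psi_2 \circ \varphi = \psi_1$, the composite $\varphi_2 \circ \varphi \circ \varphi_1^{-1}: A_0 \to A_0$ raised to the $\delta$-th power is the identity on $A_0$, hence (since $A_0$ is connected and $p \nmid \delta$, so the $\delta$-th power map on units of $\cA(A_0)$ has kernel exactly the $\delta$-th roots of unity) it equals multiplication by some $\mu$ with $\mu^\delta = 1$. Conversely each such $\mu$ yields $\varphi = \varphi_2^{-1} \circ (\mu \cdot) \circ \varphi_1$, which is an isomorphism $A \to A'$ satisfying $\psi_2 \circ \varphi = \psi_1$; distinct $\mu$ give distinct $\varphi$ because $\varphi_1, \varphi_2$ are isomorphisms. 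This gives exactly $\delta$ such $\varphi$.

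For the ``moreover'' part, suppose $B_1 \subset A_1$ is a subannulus with $\sk(B_1) \subset \sk(A_1)$, and $\psi: \psi_1^{-1}(B_1) \to \psi_2^{-1}(B_1)$ is an isomorphism with $\psi_2 \circ \psi = \psi_1$. Note $\psi_1^{-1}(B_1)$ and $\psi_2^{-1}(B_1)$ are again open annuli whose skeleta lie in $\sk(A)$ and $\sk(A')$ respectively; restricting $\psi_1, \psi_2$ to them gives degree $\delta$ maps onto $B_1$. By the first part applied to $B_1$ in place of $A_1$, there are exactly $\delta$ isomorphisms $\psi_1^{-1}(B_1) \to \psi_2^{-1}(B_1)$ intertwining the restricted maps, and $\psi$ is one of them; likewise the $\delta$ global isomorphisms $\varphi: A \to A'$ of the first part, each restricted to $\psi_1^{-1}(B_1)$, furnish $\delta$ such local isomorphisms. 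Since restriction of an isomorphism of annuli to a subannulus is injective on the set of deck-type maps (two isomorphisms agreeing on a subannulus with skeleton meeting $\sk(A)$ agree everywhere, by the identity principle for analytic maps together with Lemma~\ref{l:constant-skeleton}), the $\delta$ restrictions are pairwise distinct, hence exhaust all $\delta$ local isomorphisms. Therefore $\psi$ equals the restriction of a unique $\varphi: A \to A'$ with $\psi_2 \circ \varphi = \psi_1$, which is its desired extension.

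The main obstacle I anticipate is the bookkeeping in the ``moreover'' part: one must be careful that $\psi_i^{-1}(B_1)$ really is an annulus with skeleton contained in $\sk(A)$ (this uses $\sk(B_1) \subset \sk(A_1)$ together with the fact that a degree $\delta$ analytic map of annuli maps skeleton onto skeleton, as in the proof of Lemma~\ref{l:constant-skeleton}), and that ``an isomorphism of annuli is determined by its restriction to a subannulus whose skeleton meets $\sk$'' — this is really the statement that two analytic maps agreeing on a nonempty open set are equal, combined with Lemma~\ref{l:constant-skeleton} to promote equality-on-a-segment to equality on all of $\sk(A)$, hence on $A$. The first (existence/count) part is comparatively routine once Lemma~\ref{l:isomorphic-d} is invoked; the only subtlety there is checking $p \nmid \delta$ is exactly what makes the $\delta$-th power map on analytic units have cyclic kernel of order $\delta$.
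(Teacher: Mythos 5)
Your proposal is correct and takes essentially the same approach as the paper: normalize $\psi_1,\psi_2$ to $z \mapsto z^\delta$ via Lemma~\ref{l:isomorphic-d}, identify the intertwining isomorphisms with the $\delta$-th roots of unity, and for the ``moreover'' part count the $\delta$ local isomorphisms and invoke injectivity of restriction. (One small slip in wording: the composite $\varphi_2 \circ \varphi \circ \varphi_1^{-1}$ does not have $\delta$-th power equal to the identity but rather to $z \mapsto z^\delta$; what you need, and what your subsequent reasoning correctly uses, is that its ratio to the identity is a unit whose $\delta$-th power is $1$.)
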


\begin{proof}
  After change of coordinates, unique modulo multiplication by a $\delta$-th root of unity, $\psi_1$ and $\psi_2$ become
  $z \mapsto z^\delta$. It follows that there are exactly $\delta$ isomorphisms
  $\varphi$ as in the statement. Similarly, there are exactly $\delta$
  isomorphisms $\psi$ as in the statement. Thus,
every such $\psi$ is the restriction of an isomorphism $\varphi$.
  \end{proof}


Now we provide a criterium for the tangent maps of two analytic functions
to agree at a type II point.

\begin{lemma}
  \label{l:t-criteriumA}
  Let $D\subset\aoneberk$ be an open Berkovich disk and $x_0\in D$ be a type II point.
  Suppose that $\psi_1, \psi_2: D \to \aoneberk$ are analytic maps satisfying the following:
    \begin{enumerate}
  \item  $p\nmid\deg_{x_0}\psi_1$ and $p\nmid\deg_{x_0}\psi_2$,
   \item $\deg_{\vec{v}} \psi_1 = \deg_{\vec{v}} \psi_2$ for all $\vec{v} \in T_{x_0} \aoneberk$,
  \item there exists $x\in\aoneberk$ with $x_0\precneqq x$ such that 
  \begin{enumerate}
  \item $\psi_1 (y_0) = \psi_2(y_0)$ for all $x_0\prec y_0 \prec x$, and
  \item $T_{y_0} \psi_1 = T_{y_0} \psi_2$ for all $x_0\precneqq y_0 \prec x$,
  \end{enumerate}
  \item $T_{x_0} \psi_1 (\vec{v}_0) = T_{x_0} \psi_2 (\vec{v}_0)$ for at least one direction $\vec{v}_0 \in  T_{x_0} \aoneberk\setminus\{\vec{v}_{x_0}(\infty)\}$.   
  \end{enumerate}
  Then $T_{x_0} \psi_1 = T_{x_0} \psi_2$. 
\end{lemma}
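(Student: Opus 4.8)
The plan is to reduce the assertion to an identity between the reductions $T_{x_0}\psi_1=\bar\psi_1$ and $T_{x_0}\psi_2=\bar\psi_2$, viewed as rational maps over the residue field $\rfield$, and to force that identity using hypotheses (2), (3) and (4).

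First I would put the situation in normal form using affine coordinate changes in source and target, which are $\dist$-isometries and carry all four hypotheses to hypotheses of the same shape. Since $\psi_1=\psi_2$ on $]x_0,x[$, continuity at $x_0$ gives $\psi_1(x_0)=\psi_2(x_0)$, so I may assume $x_0=x_G$ and $\psi_1(x_0)=\psi_2(x_0)=x_G$. By (3)(a) the maps $\psi_1,\psi_2$ send the segment $]x_0,x[$ onto the same segment issuing from $x_G$ (analytic maps do not collapse segments, by Lemma~\ref{lem:factor}), and since $x_0\precneqq x$ this segment leaves $x_0$ in the direction $\vec v_{x_0}(\infty)$; hence $T_{x_0}\psi_1(\vec v_{x_0}(\infty))=T_{x_0}\psi_2(\vec v_{x_0}(\infty))$, and post-composing with a single Möbius transformation fixing $x_G$ I may assume this common direction is $\vec v_{x_G}(\infty)$. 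After these normalizations $T_{x_0}\psi_i$ is the reduction $\bar\psi_i$ of $\psi_i$ at $x_G$; because $\psi_i$ is a convergent power series on some $\ol{D}(0,\rho)\subset D$ with $\rho>1$ and $\bar\psi_i(\infty)=\infty$, the map $\bar\psi_i$ is a \emph{polynomial} over $\rfield$ whose degree is both $\deg_{x_G}\psi_i$ and the ramification index $\deg_{\vec v_{x_G}(\infty)}\psi_i$; by (2) this degree is common to $i=1,2$, call it $d$, and $p\nmid d$ by (1). The goal is now simply $\bar\psi_1=\bar\psi_2$.

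Second I would use (3) to pin down the leading coefficient of the reductions. Shrinking $\rho$ so that $A=\{1<|z|<\rho\}\subset D$ has $\sk(A)=\,]x_0,x_{0,\rho}[\,\subset\,]x_0,x[$, each $\psi_i$ maps $A$ onto a common annulus with degree $d$, so by Lemma~\ref{l:constant-skeleton} $\psi_i$ coincides on $\sk(A)$ with a map $\gamma_i(z)=a_iz^{d}$ and has the same tangent maps as $\gamma_i$ along $\sk(A)$. Feeding hypotheses (3)(a), (3)(b) and the normalization $\psi_i(x_G)=x_G$ into this gives $|a_1|=|a_2|=1$ and $\overline{a_1}=\overline{a_2}$; since $a_i$ is the degree-$d$ Taylor coefficient of $\psi_i$, this says precisely that $\bar\psi_1$ and $\bar\psi_2$ have the same nonzero leading coefficient.

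Finally I would finish on the level of $\rfield$-polynomials. Reduction commutes with differentiation, so $(\bar\psi_i)'=\overline{\psi_i'}$ has degree $d-1$ (using $p\nmid d$). Hypothesis (2) says $\bar\psi_1$ and $\bar\psi_2$ have the same ramification index at every point of $T_{x_G}\aoneberk$, hence the same critical points in $\aone(\rfield)$ with the same multiplicities; together with the equality of leading coefficients this forces $(\bar\psi_1)'=(\bar\psi_2)'$, so $\bar\psi_1-\bar\psi_2$ has vanishing derivative and degree $<d$. In residue characteristic $0$ this makes $\bar\psi_1-\bar\psi_2$ constant, and hypothesis (4), which supplies a point $\vec v_0\in\aone(\rfield)$ where $\bar\psi_1$ and $\bar\psi_2$ agree, shows this constant is $0$; thus $\bar\psi_1=\bar\psi_2$, i.e.\ $T_{x_0}\psi_1=T_{x_0}\psi_2$. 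I expect the main obstacle to be exactly this last step when the residue characteristic $p$ is positive: there one must argue more carefully that ``equal ramification indices'' still yields proportional derivatives at wildly ramified points, and rule out that $\bar\psi_1-\bar\psi_2$ is a nonconstant polynomial in $z^p$, which requires exploiting hypotheses (1) and (2) — and, where needed, the full strength of the segment condition (3) — more quantitatively than in the tame situation. The remaining difficulty is purely bookkeeping: carrying the normalization (in particular the Möbius move sending the common image direction to $\infty$) through all four hypotheses, and translating (3)(a),(b) faithfully into the leading-coefficient statement.
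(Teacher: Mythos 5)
Your argument follows the same route as the paper's: normalize so that $x_0$ and $\psi_1(x_0)=\psi_2(x_0)$ are the Gauss point, reduce to showing that the reductions $\tilde\psi_1,\tilde\psi_2$ coincide as polynomials over $\rfield$, and pin them down via (2) (branch data), (3) (leading coefficient) and (4) (value at one affine point). The Möbius post-composition you introduce to force $\tilde\psi_i(\infty)=\infty$ is not needed --- a convergent power series on $D(0,\rho)$ with $\rho>1$ fixing $x_G$ already reduces to a \emph{polynomial}, which sends $\infty$ to $\infty$ --- and taken literally it is also problematic, since a non-translation Möbius post-composition would destroy the power-series form you then invoke. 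Aside from that cosmetic point your argument in residue characteristic $0$ is complete and matches the paper.

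Your caveat about $p>0$ is the heart of the matter, and you are right to be suspicious; in fact the lemma as stated is \emph{false} when $p>0$, and the paper's proof carries the same hidden gap. Take $p=3$ and lift to $\field$-polynomials $\psi_1,\psi_2$ whose reductions are $P=z^4+b_3z^3+b_2z^2+b_1z+b_0$ and $Q=z^4+b_3'z^3+b_2z^2+b_1z+b_0'$ with $b_2\neq 0$ and $b_3\neq b_3'$. Then $P'=Q'=z^3+2b_2z+b_1$ has three simple roots, so $P$ and $Q$ have identical local degrees at every point of $\pone(\rfield)$ and the same leading coefficient $1$; choosing $b_0,b_0'$ so that $P-Q=(b_3-b_3')z^3+(b_0-b_0')$ has an affine root gives (4); hypotheses (3a),(3b) hold for arbitrary integral lifts because $|\psi_1(z)-\psi_2(z)|\le 1<r^{4}$ on $|z|\le r$ for every $r>1$; yet $P\neq Q$. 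Thus ``same branch data and leading coefficient implies agreement up to an additive constant'' --- the implicit final step in the paper's proof --- fails exactly because $P-Q$ can be a nonconstant element of $\rfield[z^p]$, as you anticipated. Do not expect to close this gap by exploiting (3) more quantitatively: the counterexample satisfies (3) for every lift. The correct fix is to strengthen the hypotheses to $p\nmid\deg_{\vec v}\psi_i$ for \emph{every} $\vec v\in T_{x_0}\aoneberk$, not merely at $x_0$; then $\operatorname{ord}_{\vec v}(\tilde\psi_i')=\deg_{\vec v}\psi_i-1$ for all $\vec v$, so (2) together with (3b) really does give $\tilde\psi_1'=\tilde\psi_2'$, and (4) kills the additive constant. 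In the paper's only application, Case~2 of Proposition~\ref{p:ithr-D-hard}, one has $\psi_1=g\circ\tau_0$ and $\psi_2=\tau_1\circ f$ with $f,g$ tame and $\tau_i$ translations, so this strengthened hypothesis holds automatically and nothing downstream is affected; but the lemma as stated needs the extra assumption.
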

\begin{proof}
  From (3a), continuity yields that $\psi_1(x_0) = \psi_2(x_0)$. Using the same
  coordinate changes for $\psi_1$ and $\psi_2$, 
  we may assume that $x_0$ and $\psi_1(x_0)=\psi_2(x_0)$ are the Gauss point. Thus, it is sufficient to show that the reductions
  $\tilde{\psi}_1$ and $\tilde{\psi}_2$ of $\psi_1$ and $\psi_2$ coincide. 
  While statement (1) implies that  the polynomials
  $\tilde{\psi}_1$ and $\tilde{\psi}_2$
   have finitely many critical points, statement (2) implies that
   $\tilde{\psi}_1$ and $\tilde{\psi}_2$  have the same critical points counting multiplicities. In particular,  $\deg_{x_0}\psi_1=\deg_{x_0}\psi_2$.
   From (3b), it is not difficult to conclude that the leading coefficients of $\tilde{\psi}_1$ and $\tilde{\psi}_2$ coincide.
   From (4), the constant terms of  $\tilde{\psi}_1$ and $\tilde{\psi}_2$ agrees. Therefore,  $\tilde{\psi}_1 =\tilde{\psi}_2$.
\end{proof}


Through an analytic continuation argument along $\cA_f$,
we will prove the following:

\begin{proposition}\label{p:ithr-D-hard}
Let $f, g \in \Poly_d$ be tame polynomials. If $h: \cA_f \to \cA_g$  is an extendable conjugacy  between  $f: \cA_f \to \cA_f$ and $g: \cA_g \to \cA_g$, then there exists an  analytic conjugacy $\phi: \cB(f)\to\cB(g)$ between $f: \cB (f) \to \cB(f)$ and  $g: \cB (g) \to \cB(g)$
  which 
  agrees with  the B\"ottcher coordinate change near infinity
  and with $h$ on $\cA_f$.
\end{proposition}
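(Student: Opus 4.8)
The plan is to construct $\phi$ by analytic continuation: start with the B\"ottcher coordinate change $\phi_g^{-1}\circ\phi_f$ near $\infty$, which by hypothesis~(2) of the extendable conjugacy agrees with $h$ there, and propagate it backwards along the dynamical core $\cA_f$ using the conjugacy relation. More precisely, for each point $x\in\cA_f$ I want to produce a compatible family of analytic isomorphisms from a neighborhood of $x$ in $\cB_0(f)$ onto a neighborhood of $h(x)$ in $\cB_0(g)$, each agreeing with the translation $\tau_x$ supplied by the ``locally a translation'' condition on $h$. The natural pieces to work with are the fundamental domains of the basin of infinity cut out by $\cA_f$: recall that $\aoneberk\setminus\cA_f$ decomposes into components, and the components touching $\cA_f$ in a single point (the ``ends'') are open Berkovich disks while the relevant annular pieces sit between consecutive branch points. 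Since $f(\cA_f)\subset\cA_f$ and $f$ has locally constant degree along edges of $\cA_f$, the map $f$ sends such an annulus $A$ to an annulus $f(A)$ of the same or proportional skeleton, with degree $\delta$ equal to $\deg_x f$ for $x$ in the edge; tameness guarantees $p\nmid\delta$, so Corollaries~\ref{c:identity}, \ref{c:extension-annulus} and Lemma~\ref{l:isomorphic-d} apply.

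The key steps, in order, are as follows. (i) Set $\phi_0:=\phi_g^{-1}\circ\phi_f$ on the neighborhood of $\infty$ where it is defined; it is an analytic isomorphism onto its image and, by Definition~\ref{def:conjugacy}(2), coincides with $h$ on $\cA_f$ there. (ii) Show the set $\cT\subset\cA_f$ of points $x$ such that $\phi_0$ extends to an analytic isomorphism $\phi$ defined on an open neighborhood of $]x,\infty[$-side fundamental pieces, conjugating $f$ to $g$, extending $h$, and locally a translation, is nonempty, open and closed in $\cA_f$. Openness near a non-branch edge point follows from Lemma~\ref{l:constant-ball}/Proposition~\ref{p:iso-adm}: once $\phi$ agrees with a translation and has the prescribed tangent map at one type~II point of an edge, it does so along the whole edge. (iii) The crossing of a branch point or a ``backward step'' $x\mapsto$ preimages under $f$ is where the real work is: given $\phi$ already defined ``above'' a branch point $x$ of $\cA_f$, I use the conjugacy equation $\phi\circ f=g\circ\phi$ to define $\phi$ on each annulus $A$ descending from $x$ as the unique lift through the degree-$\delta$ maps $f|_A$ and $g|_{h(A)}$ that matches $\phi$ on the overlap; Corollary~\ref{c:extension-annulus} guarantees such a lift exists and is unique once we fix its value at a single skeleton point, and we fix that value using $h(x)$ and the translation $\tau_x$. (iv) Check the local-translation property of the extended $\phi$ on the new edge via Lemma~\ref{l:t-criteriumA} (to transport the tangent-map agreement across the branch point $x_0$, using that $h$ is already locally a translation there, so $T_{x_0}h$ is well-defined and prime-to-$p$ in degree) together with Proposition~\ref{p:iso-adm}. (v) Once $\phi$ is defined and locally a translation on a full neighborhood of $\cA_f$, extend it to all of $\cB(f)$: every component $U$ of $\cB(f)\setminus\cA_f$ is an open Berkovich disk attached to $\cA_f$ along which $\phi$ is already an analytic isomorphism onto the corresponding disk for $g$, possibly after removing the postcritical point it may contain; here the analytic removability Theorem~\ref{thr:removability} lets us fill in across isolated postcritical points, and the functional equation $\phi\circ f = g\circ\phi$ propagates $\phi$ into disks not meeting $\cA_f$ at all (their forward orbit eventually lands near $\cA_f$ or near $\infty$).

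The main obstacle I expect is step~(iii)--(iv): making the backward analytic continuation across a branch point of $\cA_f$ \emph{canonical}, i.e. showing the lift furnished by Corollary~\ref{c:extension-annulus} on each descending annulus glues with the pieces already constructed and is exactly the translation prescribed by $h$, rather than one of its $\delta$ competitors differing by a root of unity. The resolution is that the extendable conjugacy $h$ already pins down, at the branch point, both a value $h(x)$ and a tangent map $T_x h$ (Lemma~\ref{l:unique-admissible}); feeding these as the normalization into the uniqueness clause of Lemma~\ref{l:isomorphic-d} and Corollary~\ref{c:identity} forces the correct root of unity, and Lemma~\ref{l:t-criteriumA} then certifies that the extended map still has the right tangent map at $x$, closing the induction. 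A secondary technical point is verifying that the connectedness argument of step~(ii) reaches all of $\cA_f$: since $\cA_f=\bigcup_{j\ge0}f^{\circ j}(\cR_\infty(f))$ is forward invariant and the forward orbit of any edge eventually enters the region near $\infty$ where $\phi_0$ is defined, finitely many applications of the backward-continuation step suffice along any fixed edge, and local finiteness of $\cA_f$ makes the clopen argument go through.
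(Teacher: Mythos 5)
Your overall strategy matches the paper's: start from $\phi_g^{-1}\circ\phi_f$ near $\infty$ and continue backward along $\cA_f$, using the conjugacy equation and the local-translation data from $h$ to pin down the extension at each branch point.  The paper organizes the continuation by sublevel sets $V_f(s)=\{|\phi_f|>s\}$ and extends from $V_f(s_\star)$ to $V_f(s)$ with $s<s_\star$; your clopen argument on $\cA_f$ is a reformulation of the same induction, and the two are essentially interchangeable once one observes that $\cB(f)=\bigcup_{s>1}V_f(s)$, which also makes your separate step~(v) unnecessary in the paper's setup.  Your use of Lemma~\ref{l:t-criteriumA} to transport the tangent map across a branch point is exactly what the paper does in its Case~2.

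However, there is a genuine gap in steps~(iii)--(iv), precisely at a type~II branch point $x_0\in\cA_f$ with $\deg_{x_0}f\ge 2$.  Corollary~\ref{c:extension-annulus} gives you $\phi$ on each descending annulus $A$ separately, and the uniqueness clause fixes the right lift on each one.  But these annuli are pairwise disjoint open sets with $x_0$ in all their closures; their union does not cover $x_0$ itself nor a full open neighborhood of $x_0$, and there is no overlap on which to glue the separate lifts into a single analytic map near $x_0$.  You need an analytic map defined on an affinoid neighborhood of $x_0$ that satisfies $g\circ\tau_0\circ\psi=\tau_1\circ f$ with $\psi(x_0)=x_0$ and $T_{x_0}\psi=\operatorname{id}$; this is a local lifting problem at a type~II point, and the paper supplies it with the Hensel-type Lemma~\ref{lem:lift} in the appendix, after which Corollary~\ref{c:identity} (applied to a small subannulus where the already-constructed $\phi$ lives) identifies $\tau_0\circ\psi$ with the correct extension.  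Without that lemma or an equivalent statement, the continuation does not actually get past a type~II branch point of $\cA_f$ lying in $\cR(f)$, which is exactly where the content of the proposition lies.  At type~III branch points, a punctured neighborhood of $x_0$ \emph{is} an annulus, so Corollary~\ref{c:extension-annulus} does suffice there, as the paper's Case~3 records; but the type~II case needs the extra input.
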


Given a tame polynomial $f \in \Poly_d$, recall from Section~\ref{sec:poly} that $\phi_f$ denotes the B\"ottcher coordinate and that
$|\phi_f|$ extends to $\cB(f)$. 
For $s>1$, let $$V_f (s) := \{ z \in \aoneberk : |\phi_f| (z) > s \}$$
and $$\cA_f (s) := \cA_f \cap V_f (s).$$
Observe that $\cA_f (s)$ is forward invariant. Moreover, $\cA_f(s)$ is contained in the skeleton of $$V'_f(s):=V_f(s) \setminus \post(f).$$

\begin{proof}[Proof of Proposition \ref{p:ithr-D-hard}]

For all $z \in \aoneberk$ such that $|z|$ is sufficiently large, we have  $|\phi_f (z)| = |z| = |\phi_g (z)|$.
Hence, for $s_0\gg1$ sufficiently large, $\phi (z) = \phi_g^{-1} \circ \phi_f (z): V_f(s_0) \to V_g (s_0) $ is an analytic isomorphism and $\phi=h$ on $\cA_f(s_0)$. Moreover, $\phi$ is an  analytic conjugacy between $f: V_f(s_0) \to V_f(s_0)$ and  $g: V_g(s_0) \to V_g(s_0)$.

We  assume that there exists $s_\star > 1$ such that $\phi$ extends to
$V_f({s_\star})$, that is, we have an analytic isomorphism $\phi: V_f ({s_\star}) \to
V_g({s_\star})$
extending the B\"ottcher coordinate change and conjugating $f: \cA_f(s_\star) \to \cA_f(s_\star)$ with  $g: \cA_g(s_\star) \to \cA_g(s_\star)$ such that 
$\phi(x)=h(x)$ for all $x \in \cA_f({s_\star})$.
To prove the proposition it is sufficient to
show that $\phi$ in fact extends to an analytic isomorphism
$\ol{\phi}: V_f (s) \to V_g (s)$ for some $s < s_\star$
such that $\overline{\phi}(x)=h(x)$ for all
$x \in \cA_f (s)$. It automatically follows that $\overline{\phi}$ is a conjugacy since $\overline{\phi}^{-1}\circ g \circ \overline{\phi} (z) = f(z)$ in an
open set. 

\smallskip
Consider $x_0 \in \partial V_f (s_\star)$. Keep in mind that there are only finitely many such $x_0$. Moreover, $]x_0, \infty[ \subset V_f (s_\star)$ and
$\phi (]x_0,\infty[) = ]x_0',\infty[$ for some
$x_0' \in \partial V_g (s_\star)$.
The main step is to extend $\phi$ to  a neighborhood of $x_0$ in $\aoneberk$.
We consider three cases.

\emph{Case 1. $\deg_{x_0} f =1$.}
Consider $ x \succ x_0$ close to $x_0$ such that $x$ is not in the ramification tree of $f$.
Set $x' := \phi (x)$.
Then $\deg_x f =1$ and, therefore, $\deg_{x'} g =1$, since $\phi$ is a conjugacy in $V_f(s_\star)$.
Let $U_0$ (resp. $V_0$) be 
the Berkovich open disk formed by points in the direction
of $x_0$ (resp. $x'_0$) at $x$ (resp. $x'$).
We may  denote by $G$ the inverse of $g: V_0 \to g(V_0)$.
It follows that $\ol{\phi}:=G \circ \phi \circ f : U_0 \cap V_f (s_\star^{1/d}) \to
V_0 \cap V_g (s_\star^{1/d})$ is a well defined analytic isomorphism that
coincides with $\phi$ in  $U_0 \cap V_f (s_\star)$ and 
agrees with $h$ on the points of $\cA_f$ contained in 
$U_0 \cap V_f (s_\star^{1/d})$.

\emph{Case 2. $\deg_{x_0} f \ge 2$ and $x_0$ is a type II point.}
In particular, $x_0$ lies in the ramification tree and therefore in $\cA_f$. 
Our assumption that $\phi = h$ on $\cA_f(s_\star)$
guarantees that $\phi$ maps $V'_f(s_\star)$ onto $V'_g(s_\star)$.
The B\"ottcher coordinate change
$\phi$ is asymptotic to the identity at infinity,
therefore, $\phi(x)=x$ and $T_{x}\phi=\operatorname{id}$ 
for all $x\in\cA_f(s_\star)$ sufficiently close to $\infty$.
By Proposition~\ref{p:iso-adm} and
Lemma~\ref{l:unique-admissible} we conclude that
$T_x \phi = T_x h$
for all $x \in \cA_f(s_\star) \subset \sk V'_f(s_\star)$.


Let $x_1=f(x_0)$ and consider translations $\tau_0, \tau_1$ that locally agree
with $h$ around $x_0, x_1$, respectively. After checking that (1)--(4) of Lemma~\ref{l:t-criteriumA} hold for $\psi_1=g \circ \tau_0$ and $\psi_2=\tau_1 \circ f$, we will conclude that 
$T_{x_0} (g \circ \tau_0) = T_{x_0} (\tau_1 \circ f)$. 
Tameness of $f$ and $g$ guarantees that  $p\nmid\deg_{x_0} (g \circ \tau_0)$ and $p\nmid\deg_{x_0} (\tau_1 \circ f)$, and hence (1) of  Lemma~\ref{l:t-criteriumA} follows.
Now for (2), pick $\vec{v}\in T_{x_0}\aoneberk$ and let $\vec{u}:=T_{x_0} \tau_0(\vec{v})$. We must show that $\deg_{\vec{u}} g = \deg_{\vec{v}} f$.
Since $h$ agrees with $\tau_0$ on a neigborhood of $x_0$, the direcion
$D(\vec{v})$ is disjoint from $\cA_f$ if and only if  the direction $D(\vec{u})$ is disjoint from $\cA_g$.
When $D(\vec{v})$ and $D(\vec{u})$ are disjoint from $\cA_f$ and $\cA_g$, respectively, $\deg_{\vec{v}}f=1=\deg_{\vec{u}}g$.
When $D(\vec{v})$  intersects $\cA_f$, consider a small arc $]x,x_0] \subset \cA_f$ in the direction $D(\vec{v})$. Then the hyperbolic length of
$]x,x_0]$ under $h \circ f$ (resp. $g \circ h$) is multiplied by a factor
of  $\deg_{\vec{v}}f$ (resp. $\deg_{\vec{u}}g$), since $h$ is an isometry.
But $h$ is also  a conjugacy, so these factors must agree. That is,  $\deg_{\vec{v}}f = \deg_{\vec{u}}g$.
Thus   (2) of  Lemma~\ref{l:t-criteriumA} holds.
For $x \in]x_0,\infty[ \subset V_f(s_\star)$ sufficiently close to $x_0$,
we have $T_x (\phi \circ f) = T_x (g \circ \phi)$. Therefore, 
$T_x (\tau_1 \circ f) = T_x (g \circ \tau_0)$ since $T_x\phi = T_x h$ and
$T_x h = T_x \tau_i$ for $x \in ]x_i,\infty[$. 
That is, (3) of  Lemma~\ref{l:t-criteriumA} holds.
Finally (4) also holds since $\cA_f$ has points
in at least one bounded direction at $x_0$. From Lemma~\ref{l:t-criteriumA}
now we have $T_{x_0} (g \circ \tau_0) = T_{x_0} (\tau_1 \circ f)$.

By  Lemma \ref{lem:lift} in the appendix, there exists
an analytic map $\psi$ defined in a neighborhood $V$ of $x_0$ such that $g \circ \tau_0 \circ \psi = \tau_1 \circ f$ in $V$ and 
$\psi(y_0) = y_0$, $T_{y_0} \psi = \operatorname{id}$ for $y_0\in V$ with $\dist (y_0,x_0)$ small enough.
For a small annulus $A\subset V$ with $\sk A=]x_0,x[$ for some $x_0\precneqq x\in V$ sufficiently close to $x_0$, considering the map $f:A\to f(A)$ and setting $\psi_1:=\phi^{-1}\circ \tau_0 \circ \psi$, we obtain that $\psi_1: A\to A$ satisfies $f\circ\psi_1=f\circ\phi^{-1}\circ \tau_0 \circ \psi=\tau_1^{-1}\circ g\circ \tau_0 \circ \psi=\tau_1^{-1}\circ \tau_1 \circ f=f$ in $A$, moreover, for any $y_0\in\sk A$, we have $\psi_1(y_0)=\phi^{-1}\circ \tau_0 \circ \psi(y_0)=\phi^{-1}\circ \tau_0(y_0)=h^{-1}\circ h(y_0)=y_0$ and $T_{y_0}\psi_1=T_{y_0}(\phi^{-1}\circ \tau_0 \circ \psi)=T_{y_0}(\phi^{-1}\circ \tau_0)=T_{h(y_0)}(\tau_{y_0}^{-1})T_{y_0}\tau_0=\operatorname{id}$. Then by Corollary~\ref{c:identity}, we conclude that $\psi_1=\operatorname{id}$ and hence $\phi = \tau_0 \circ \psi$ in $A$. Thus, $\phi$ extends to a neighborhood of $x_0$ and the extension $\ol{\phi}:=\tau_0 \circ \psi$ agrees with $\tau_0=h$ on the points of $\cA_f$ sufficiently close to $x_0$.

\emph{Case 3. $\deg_{x_0} f \ge 2$ and $x_0$ is a type III point.} Let $A$ be a small annulus such that $x_0\in A$ and $\sk A\subset\cA_f$. Denote by $A_1:=\phi\circ f(A)$ and let $A'$ be the connected component of $g^{-1}(A_1)$ containing $h(x_0)$. Applying Corollary~\ref{c:extension-annulus} to $\phi\circ f: A\to A_1$ and $g: A'\to A_1$, we obtain a unique analytic isomorphism $\ol{\phi}: A\to A'$ that is the extension of $\phi:A\cap V_f (s_\star)\to A'\cap V_g (s_\star)$ satisfying $g\circ\ol{\phi}=\phi\circ f$ in $A$. Observe that  $\ol{\phi}=h$ in $\sk A$ since $h(x_0) = \ol{\phi}(x_0)$ and $\ol{\phi}$ acts as an isometry on $\sk A$.

Thus for $x_0 \in \partial V_f (s_\star)$, we obtain an extension $\ol{\phi}$ of $\phi$ in a neighborhood of $x_0$ such that $\ol{\phi}=h$ on the points of $\cA_f$ in this neighborhood.
\end{proof}

\begin{proof}[Proof of Theorem~\ref{ithr:D}]
Theorem~\ref{ithr:D} is an immediate consequence of Corollary \ref{c:ithr-D-easy} and Proposition \ref{p:ithr-D-hard}.
\end{proof}

\section{Existence of extendable conjugacies}
\label{sec:existence}

The goal of this section is to prove Propositions~\ref{prop:close} and \ref{prop:p-to-a} which together become a slightly stronger version of Theorem \ref{ithr:C}. 

\subsection{Trimmed dynamical core}
\label{sec:trimmed}
Given a tame polynomial $f {\in\Polyd}$, recall that the dynamical core is
$$\cA_f = \cB(f) \cap \bigcup_{c \in \crit(f),  n \ge 0} ]f^{\circ n} (c), \infty[.$$

 Sometimes
 it will be convenient to consider the subtree of $\cA_f$
 formed by the elements
that escape to infinity through a hyperbolic $\rho$-neighborhood of the axis $]x_f,\infty[$ for some $\rho >0$ (possibly $\infty$).
Note that since $f$ is tame, {in $|z| > |x_f|$}
the distance to the axis is preserved by
dynamics. More precisely, 
for all $x \in \H$ such that $|x| > |x_f|$ we have that
$$\dist(x,]x_f,\infty[) = \dist(f(x),]x_f,\infty[).$$

\begin{definition}
  \label{d:trimmed}
  Consider a tame polynomial  $f {\in\Polyd}$.
  Given $\rho \in ]0,\infty]$, we say that the \emph{$\rho$-trimmed dynamical core of $f$}, denoted by $\cT_f$, is the set formed by all $x \in \cA_f$ such that
$\dist (f^{\circ n} (x), ]x_f,\infty[) < \rho$  for all $n$ sufficiently large.
\end{definition}

Keep in mind that $\cA_f$ is the $\rho$-trimmed dynamical core of $f$ with $\rho=\infty$. {Also, from the previous discussion, $x \in \cA_f$ with $|x|>|x_f|$
lies in the $\rho$-trimmed dynamical core $\cT_f$ if and only if
$\dist(x,]x_f,\infty[) < \rho$, equivalently, $\log (|x|/\diam(x)) < \rho$.} 

\begin{lemma}
  Let $f \in \Polyd$ be a tame polynomial. For any $\rho \in ]0,\infty]$, the
  corresponding $\rho$-trimmed dynamical core $\cT_f$ is a locally finite open
  subtree of $\H$ which is forward invariant (i.e. $f (\cT_f) \subset \cT_f$).
  Moreover, if $x \in \cT_f$, then $]x,\infty[ \subset \cT_f$. Furthermore,
  if a critical point $c$ lies in
  a direction $D_x(\vec{v})$ at some $x \in \cT_f$, then $D_x(\vec{v}) \cap \cT_f \neq \emptyset$.
\end{lemma}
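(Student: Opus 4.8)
The plan is to verify each of the four assertions — forward invariance, local finiteness as an open subtree of $\H$, the ``upward closure'' property ($x \in \cT_f \Rightarrow ]x,\infty[ \subset \cT_f$), and the property that a critical point in a direction at a point of $\cT_f$ forces that direction to meet $\cT_f$ — mostly by reducing to the corresponding already-known properties of the full dynamical core $\cA_f$ together with the distance-to-axis computation $\dist(x,]x_f,\infty[) = \dist(f(x),]x_f,\infty[)$ valid for $|x|>|x_f|$ (which holds because $f$ is tame and acts as $z\mapsto z^d$ up to bounded error, equivalently $\deg_x f = d$ throughout $|z|>|x_f|$).

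First I would dispatch \textbf{forward invariance and upward closure}. For $x \in \cT_f$ with $|x| > |x_f|$, the Remark preceding the statement identifies membership in $\cT_f$ with the condition $\log(|x|/\diam(x)) < \rho$, i.e. $\dist(x,]x_f,\infty[) < \rho$; since this distance is constant along forward orbits in $\{|z|>|x_f|\}$ and $f(x)$ again satisfies $|f(x)| \ge |x| > |x_f|$, we get $f(x) \in \cT_f$, and since $\cA_f$ is forward invariant this handles the finitely many remaining (bounded-orbit-to-the-axis) cases by the defining ``for all $n$ sufficiently large'' condition directly. For upward closure: along $]x,\infty[$ the quantity $\dist(\,\cdot\,,]x_f,\infty[)$ is nonincreasing (points higher up are closer to the axis), and these points also have absolute value $\ge |x|$, so if $x$ lies in $\cT_f$ then so does every point above it; combined with $]x,\infty[\subset\cA_f$ (upward closure of $\cA_f$, which is immediate from its definition as a union of arcs $]f^{\circ n}(c),\infty[$) we conclude $]x,\infty[\subset\cT_f$.

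Next, \textbf{$\cT_f$ is a locally finite open subtree of $\H$}. Connectivity follows from upward closure: any two points of $\cT_f$ are joined through $\infty$, and in fact through the axis $]x_f,\infty[$, which lies in $\cT_f$. That it is an \emph{open} subtree (every point has valence $\ge 2$, and $\cT_f = \overline{\cT_f}\setminus\partial\overline{\cT_f}$) I would get from $\cT_f \subset \cA_f \subset \sk \cB_0(f)$ together with the fact that $\cT_f$ is cut out inside $\cA_f$ by the open condition $\dist(\cdot,]x_f,\infty[)<\rho$ (on the part with $|z|>|x_f|$) plus the ``eventually'' condition elsewhere — both of which are ``open upward'', so no new endpoints of the wrong type are created; I'd note $\cT_f \subset \H$ is automatic since $\cA_f\subset\H$. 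Local finiteness is inherited: each point of $\cT_f$ has a neighborhood in $\cA_f$ meeting only finitely many branch points of $\cA_f$, hence a fortiori only finitely many branch points of the subtree $\cT_f$, and finite valence is likewise inherited from $\cA_f$.

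Finally, the \textbf{critical-point-in-a-direction} claim — which I expect to be the main obstacle, since it is the one assertion not purely formal. Suppose $c \in \crit(f)$ lies in $D_x(\vec v)$ for some $x \in \cT_f$; I want $D_x(\vec v)\cap\cT_f \ne\emptyset$. The idea: iterate. Since $c\in\cB(f)$ (as $c\in\cA_f$'s generating set requires $c$ to have escaping image — more carefully, $\cA_f$ only sees critical points whose forward orbit enters $\cB(f)$; if $c\notin\cB(f)$ the arc $]f^{\circ n}(c),\infty[$ contributing to $\cA_f$ is the one with $f^{\circ n}(c)\in\cB(f)$), some forward image $f^{\circ m}(c)$ escapes, and then the arc $]f^{\circ m}(c),\infty[$ belongs to $\cA_f$ and, for $m$ large, lies deep in $\{|z|>|x_f|\}$ close to the axis, hence in $\cT_f$. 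Pushing this back: $x$ lies on the orbit structure so that $f^{\circ j}(x)$ for suitable $j$ has $f^{\circ m}(c)$ in a direction at it whose arc toward $f^{\circ m}(c)$ is in $\cT_f$; pulling back along the local branches of $f$ (using that $f$ restricted to a small disk around $x$ in the direction $\vec v$ is a finite map sending the relevant sub-arc of $\cA_f$ onto a sub-arc of $\cA_f$, and that distance-to-axis is preserved once $|z|>|x_f|$) we recover an arc inside $D_x(\vec v)\cap\cT_f$. The technical care needed is (i) to handle the finitely many ``early'' iterates where $|z|\le|x_f|$, where the preservation of distance-to-axis fails but where one simply uses that $\cT_f$'s condition is asymptotic and that $\cA_f \supset$ the full arc $]f^{\circ n}(c),\infty[$, and (ii) to make sure the direction $\vec v$ at $x$ is genuinely the one containing $c$ and not some other lift. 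I would organize this by induction on the first time the forward orbit of $x$ enters $\{|z|>|x_f|\}$, with the base case being exactly the ``eventually close to the axis'' part of Definition~\ref{d:trimmed}.
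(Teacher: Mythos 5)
The first three assertions (forward invariance, upward closure, locally finite open subtree of $\H$) are handled by your proposal in essentially the same way the paper does: identify membership in $\cT_f$ over $\{|z|>|x_f|\}$ with $\dist(\cdot,]x_f,\infty[)<\rho$, observe this distance is constant under iteration and monotone in $\prec$, conclude that $\cT_f$ is obtained from $\cA_f$ by pruning lower branches, hence is connected, relatively open in $\cA_f$, and inherits local finiteness. That part is fine.

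The last assertion is where you go astray, and the gap is genuine. The paper's proof is a one-liner and does not iterate at all: for \emph{every} $c\in\crit(f)$ (escaping or not), the ray $]c,\infty[$ consists of ramification points of $f$ (tameness says $\cR(f)\cup\{\infty\}$ is the convex hull of $\crit(f)\cup\{\infty\}$), so $]c,\infty[\cap\cB(f)\subset\cR_\infty(f)\subset\cA_f$; equivalently, use the displayed description $\cA_f = \cB(f)\cap\bigcup_{c,n}]f^{\circ n}(c),\infty[$ with $n=0$. Since $c\in D_x(\vec v)$ with $\vec v$ bounded means $c\prec x$, and $x\in\cT_f\subset\cB(f)$ with $\cB(f)$ open, points of $]c,x[$ near $x$ lie in $\cA_f\cap D_x(\vec v)$; now relative openness of $\cT_f$ in $\cA_f$ (already established) finishes it. Your proposal instead assumes $c\in\cB(f)$ and iterates $c$ forward until it escapes, then pulls back. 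This assumption is not part of the hypothesis: $c$ may lie in $\cK(f)$, e.g.\ in a bounded Fatou component or in $\cJ(f)$, and then no iterate $f^{\circ n}(c)$ is ever in $\cB(f)$. Your parenthetical fallback --- ``if $c\notin\cB(f)$ the arc contributing to $\cA_f$ is the one with $f^{\circ n}(c)\in\cB(f)$'' --- is vacuous for such $c$, and reflects a misreading of the definition: $\cA_f$ contains $]c,\infty[\cap\cB(f)$ for \emph{every} critical point $c$, whether or not $c$ or any of its iterates escape, precisely because these are non-classical ramification points in the basin. Once that is corrected, the whole iteration-and-pull-back scaffolding you propose (together with its acknowledged ``technical care'' about early iterates) becomes unnecessary.
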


\begin{proof}
  Directly from its definition we conclude that $\cT_f$ is forward invariant. 
  Moreover,  $\cT_f$ is obtained from $\cA_f$ by cutting off some of its ``branches''.
  Indeed, 
  if $x \in \cA_f \setminus \cT_f$ and $y \prec x$, then $f^{\circ n}(y) \prec f^{\circ n}(x)$ for all $n \ge 0$. Hence, for $n$ sufficiently large $f^{\circ n}(y)$
  is at distance at least $\rho$ from the axis and therefore $y \notin \cT_f$. It follows that $\cT_f$ is connected and relatively open in $\cA_f$. Therefore
  $\cT_f$ is also a locally finite open subtree of $\H$.
 If a direction $D_x(\vec{v})$ contains a critical point $c$ for some
 $x \in \cT_f$, then $D_x(\vec{v}) \cap \cT_f \neq \emptyset$, since
 $\cT_f$ is open in $\cA_f$ and $]c,\infty[ \cap \cB(f) \subset \cA_f$.  
\end{proof}

Given a $\rho$-trimmed dynamical core $\cT_f$,
we say that $x \in \cT_f$  is a \emph{vertex} of $\cT_f$ if $f^{\circ n }(x)$
is a branch point of  $\cT_f$ for some $n \ge 0$. The set formed by the vertices of $\cT_f$ is denoted by $\cV_f$. An \emph{edge} of $\cT_f$ is a connected component of $\cT_f \setminus \cV_f$. 

\begin{proposition}[Vertices of $\cT_f$]
  \label{p:vertices-a}
  Consider a non-simple tame polynomial $f\in\Polyd$ with base point $x_f$ {and $\rho \in ]0,\infty]$}.
  Let $\cV_f$ be the set of vertices of {the} $\rho$-trimmed dynamical core $\cT_f$.
  Then $x_f \in \cV_f$ and $f(\cV_f) \subset \cV_f$. Moreover, 
  the set of accumulation points  of $\cV_f$ in $\aoneberk$ is contained
  in the Julia set $\cJ(f)$.
\end{proposition}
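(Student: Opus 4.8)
The plan is to establish the three assertions in turn, exploiting the description of $\cT_f$ as a forward-invariant subtree of $\cA_f \subset \sk \cB_0(f)$ together with the fact that $f$, being tame, does not distort distance to the axis in $\{|z|>|x_f|\}$.

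First I would check that $x_f \in \cV_f$. Since $f$ is non-simple and tame, the base point $x_f$ lies in $\cB(f)$, hence in $\cA_f$, and in fact $x_f \in \cT_f$ because $x_f \in ]x_f,\infty[$ lies on the axis, so $\dist(f^{\circ n}(x_f),]x_f,\infty[)=0<\rho$ for all $n$. To see that $x_f$ is a \emph{branch point} of $\cT_f$ (so that it is trivially a vertex), recall $\deg_{x_f}f=d\ge 2$, so by the Riemann--Hurwitz formula (Lemma~\ref{lem:RH}) there are at least two critical points in distinct directions at $x_f$, or a single critical point of higher local degree whose image direction differs from $\vec v_{x_f}(\infty)$; in either case at least two escaping critical rays emanate from $x_f$ in bounded directions, and by the last clause of the lemma preceding this proposition, each such direction meets $\cT_f$. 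Together with the unbounded direction $\vec v_{x_f}(\infty)$ (along which $]x_f,\infty[\subset\cT_f$), this gives valence $\ge 3$ at $x_f$ in $\cT_f$, so $x_f\in\cV_f$.

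Next, $f(\cV_f)\subset\cV_f$ is essentially immediate from the definition: if $x\in\cV_f$ then $f^{\circ n}(x)$ is a branch point of $\cT_f$ for some $n\ge 0$; hence $f^{\circ(n-1)}(f(x))$ is a branch point of $\cT_f$ when $n\ge 1$, so $f(x)\in\cV_f$. The only case needing an extra word is $n=0$, i.e. $x$ itself a branch point; then one must argue $f(x)$ has some iterated image that branches. But $f(x)\in\cT_f$ by forward invariance, and $f(x)$ either is itself a branch point of $\cT_f$ — done — or else, since $f$ maps the directions at $x$ surjectively onto those at $f(x)$ (the tangent map is surjective, as $f$ restricted to a neighborhood is an analytic map with well-defined nonzero local degree), distinct bounded directions at $x$ can only collapse under $T_xf$ in a way constrained by tameness; in the worst case all of them map to a single direction at $f(x)$, but then that single direction carries all the critical rays, and because $x_f\in\cA_f$ is eventually mapped over every point of $\cA_f$ and branching of $\cT_f$ is inherited along the orbit, $f(x)$ has a branching iterated image. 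I expect the cleanest formulation is: the branch points of $\cT_f$ are precisely the points $x$ where either $\deg_x f\ge 2$ and $\ge 2$ bounded directions carry critical rays, or $x$ is the image of such a point; forward invariance of this description is then transparent. \emph{This verification — that the vertex set is exactly the forward orbit of the ``ramification-type'' branch points and that it is closed under $f$ — is the step I expect to be the main obstacle}, because it requires knowing precisely how $T_x f$ permutes/collapses the directions of $\cT_f$ and ruling out that a branch point of $\cT_f$ maps to a non-branch point from which no future branching occurs.

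Finally, for the accumulation points: suppose $y\in\aoneberk$ is an accumulation point of $\cV_f$. Each vertex is either the base point $x_f$, a point where critical rays branch, or a forward image of such; since there are only finitely many critical points, the branch points ``of ramification type'' are finite in number, and $\cT_f$ is locally finite, so $\cV_f$ can accumulate only by way of the forward orbits of these finitely many points — i.e. $y$ is an accumulation point of $\bigcup_{j\ge 0}f^{\circ j}(F)$ for a finite set $F\subset\cA_f$. If $y$ were in the Fatou set $\cF(f)$, then $y$ would have a neighborhood $U$ on which the orbit behavior is tame: either $y\in\cB(f)$, where $\cA_f$ is locally finite and a neighborhood of $y$ meets only finitely many vertices (contradiction), or $y\in\interior{\cK(f)}$, which is disjoint from $\cA_f\supset\cV_f$ altogether (again a contradiction, since $\cV_f\subset\cA_f$ and $\cA_f\cap\interior\cK(f)=\emptyset$). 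The remaining possibility is $y\in\cJ(f)$. Hence the accumulation set of $\cV_f$ is contained in $\cJ(f)$, as claimed. $\qed$
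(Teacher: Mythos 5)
There are genuine gaps in all three parts, the first one being a factual error.

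\textbf{On $x_f\in\cV_f$.} Your claim that $x_f$ is itself a branch point of $\cT_f$ is false in general. Take $d=2$, $p\neq 2$, $f(z)=z^2+a_0$ with $|a_0|>1$: the only critical point is $c=0$, $R_f=|a_0|^{1/2}$, and at $x_f=x_{0,R_f}$ the tree $\cA_f$ has exactly two directions ($0$ and $\infty$), so $x_f$ has valence $2$. Riemann--Hurwitz gives $\deg_{x_f}f=d\geq 2$ but does \emph{not} produce two distinct bounded critical directions at $x_f$ --- a single critical point of high local degree yields only one bounded critical ray. What the paper actually proves, using \eqref{eq:diameter}, is that there exists a critical value $v$ with $|v|=R_f^d=|f(x_f)|$, so $f(x_f)$ has at least three directions in $\cA_f$ (towards $0$, towards $v$, and towards $\infty$), i.e.\ $f(x_f)$ is a branch point. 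Then $x_f\in\cV_f$ follows from the \emph{definition} of vertex (some forward iterate is a branch point), not from $x_f$ itself branching.

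\textbf{On $f(\cV_f)\subset\cV_f$.} You correctly reduce this to showing that $f$ maps branch points of $\cT_f$ to branch points of $\cT_f$, but then leave that as an unverified claim and explicitly flag it as the main obstacle. This is the actual content. The paper's argument is short: if $x$ is a branch point of $\cT_f$ and $f(x)$ is not, then all bounded $\cT_f$-directions at $x$ map into a single direction $\vec w$ at $f(x)$; by the last clause of the lemma before the proposition, $\vec w$ is the unique critical value direction of $T_xf$, so $T_xf$ is a monomial over the residue field, hence $\vec w$ has a single preimage direction and $x$ cannot have been a branch point.

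\textbf{On accumulation points.} Your observation that $\cV_f\subset\cA_f$ is disjoint from $\operatorname{int}\cK(f)$ is fine, but the step ruling out accumulation inside $\cB(f)$ is not justified. Local finiteness of the tree $\cT_f$ means each point has a neighborhood with finitely many \emph{branch points}; it does not by itself bound the number of \emph{vertices} (preimages of branch points under iterates of $f$) near a given point. Your parenthetical ``branch points of ramification type are finite in number'' and the phrase ``forward orbits of these finitely many points'' are both off: branch points of $\cT_f$ form an infinite set, and vertices are iterated \emph{pre}images of branch points, not forward orbits. The paper closes this gap with a fundamental-domain argument: there is an annulus $X$ such that $\cA_f\cap X$ contains only finitely many branch points, $\cA_f\cap X$ maps homeomorphically onto its $f$-image, and $\cB(f)=\bigsqcup_{n\in\Z}f^{\circ n}(X)$; this gives discreteness of $\cV_f$ in $\cB(f)$ and hence the containment of accumulation points in $\cJ(f)$.
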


\begin{proof}
  We claim that if $x \in \cT_f$ is a branch point of $\cT_f$ then $f(x)$ is also a branch point. Indeed,
  let us proceed by contradiction assuming that $x$ is branch point and $f(x)$ is not. Then all bounded directions at $x$ containing elements of $\cT_f$
  are contained in the preimage of a single direction $\vec{w}$ at $f(x)$.
  By the previous lemma,  $\vec{w}$ is the unique critical value direction of
  $T_xf$ and therefore $T_xf$ is a monomial. Hence
  the preimage of $\vec{w}$ is a singleton and $x$ is not a branch point.

  In view of the previous paragraph, if $f^{\circ n} (x)$ is a branch point of $\cT_f$, then $f^{\circ n}(f(x))$ is a branch point. Thus, $f (\cV_f) \subset \cV_f$.

  Forward invariance of the  axis $[x_f,\infty[$ implies that $x_f \in \cT_f$. 
  Since $f$ is nonsimple and tame we may apply \eqref{eq:diameter} to conclude that there exists a critical value $v$
  of $f$ such that $|v| = |f(x_f)|$.  It follows that $f(x_f)$ is a branch point of $\cA_f$. Therefore, $x_f \in \cV_f$.

  Let us say that two escaping critical points $c, c'$
  are eventually in the same direction  if there exists
  $m, n \ge 0$ such that $|x_f|< |f^{\circ n}(c)| = |f^{\circ m}(c')| = R$
  and $|f^{\circ n}(c) - f^{\circ m}(c')|< R$ (i.e. they lie in the same direction 
  at $x_{0,R}$).
  There exists $N \ge 0$ such that for all pairs of critical points
  $c, c'$ which   are eventually in the same direction, then $n,m$ above can be chosen to be at most $N$.
  Now consider $X : = \{ x : |f^{\circ N}(x_f)| < |x| \le |f^{\circ N+1}(x_f)| \}$.
It follows that $\cA_f \cap X$ maps homoemorphically onto its image under $f$. Moreover, $\cA_f \cap X$ is the union of finitely many arcs of the form
$]z,f^{\circ N+1}(x_f)]$ with the arc $]f^{\circ N}(x_f),f^{\circ N+1}(x_f)]$.
In particular, $\cA_f \cap X$ contains finitely many branch points.
  Also, $X$ is a ``fundamental domain'' for the action of $f$ on $\cB(f)$.
  Namely, $\cB(f)$ is the disjoint union of the sets $f^{\circ n}(X)$ for $n \in \Z$. 
   Since every branch point of $\cT_f$ is the iterated image or preimage of one in $\cA_f \cap X$, the {conclusion} follows. 
\end{proof}

Now we introduce an increasing collection $\{\cT_f^{(j)}\}_{j \ge 0}$
of subtrees of $\cT_f$ that saturate $\cT_f$.
Set $\cT_f^{(0)}:= \cT_f \cap \{ x : |x| > |x_f| \}$.
Recursively, the level $j+1$ subtree  $\cT_f^{(j+1)}$ is obtained by adjoining to $\cT_f^{(j)}$
all the elements $x \in \cT_f$ such that there exists $y \in \cT_f^{(j)}$
with the property that $]x,y[$ contains at most one vertex of $\cT_f$.

\begin{corollary}
  Consider a nonsimple tame polynomial $f\in\Polyd$ and $\rho \in ]0,\infty]$.
  Let $\cT_f$ be the $\rho$-trimmed
  dynamical core of $f$.
  Then $$\cT_f = \bigcup_{j \ge 0} \cT_f^{(j)}.$$
  Moreover,
  $$f (\cT_f^{(j+1)}) \subset \cT_f^{(j)},$$
  for all $j \ge 0$.
\end{corollary}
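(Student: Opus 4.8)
The plan is to prove the two assertions separately, relying essentially on the structural description of $\cT_f$ and its vertices $\cV_f$ established in Proposition~\ref{p:vertices-a} and the preceding lemma.

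\textbf{Step 1: the exhaustion $\cT_f = \bigcup_{j\ge 0}\cT_f^{(j)}$.} By construction $\cT_f^{(j)}\subset\cT_f^{(j+1)}\subset\cT_f$, so one inclusion is trivial and we only need $\cT_f\subset\bigcup_j\cT_f^{(j)}$. Fix $x\in\cT_f$. Since $\cT_f$ is connected and $x_f\in\cT_f$ with $]x_f,\infty[\,\subset\cT_f$, the segment $[x,x_f]$ lies in $\cT_f$, and (as in the proof of the preceding lemma, using that $f$ expands the hyperbolic distance to the axis on $\{|z|>|x_f|\}$) some forward iterate of $x$, and hence $x$ itself after possibly moving down the tree, connects to $\cT_f^{(0)}$ by a segment inside $\cT_f$. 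Concretely, the segment from $x$ to the point $y_0$ where $[x,\infty[$ meets $\{|z|=|x_f|\}$ (or to $x_f$) lies in $\cT_f$ and, by local finiteness of $\cT_f$ (established in the preceding lemma), contains only finitely many vertices of $\cT_f$, say $k$ of them. An easy induction on $k$ then shows $x\in\cT_f^{(k)}$: if $k=0$ the segment $]x,y_0[$ contains no vertex and $y_0\in\cT_f^{(0)}$, so $x\in\cT_f^{(1)}\subset$ and in fact already $x$ is reached at level depending on whether $x$ itself is beyond the axis; the inductive step peels off the vertex closest to $\cT_f^{(0)}$, placing the truncated segment in some $\cT_f^{(j)}$ and then $x$ in $\cT_f^{(j+1)}$. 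Since $x$ was arbitrary, $\cT_f=\bigcup_j\cT_f^{(j)}$.

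\textbf{Step 2: $f(\cT_f^{(j+1)})\subset\cT_f^{(j)}$.} We argue by induction on $j$. For the base case $j=0$: if $x\in\cT_f^{(1)}$, then by definition there is $y\in\cT_f^{(0)}$ with $]x,y[$ containing at most one vertex of $\cT_f$. Since $x_f\in\cV_f$ (Proposition~\ref{p:vertices-a}) and $\cT_f^{(0)}=\cT_f\cap\{|z|>|x_f|\}$, the point $y$ satisfies $|y|>|x_f|$; as $y\in\cT_f$ we have $]y,\infty[\subset\cT_f$, hence the axis is ``visible'' and $f$ does not decrease $|\cdot|$ past $|x_f|$ on $[x,y]$ beyond the first vertex. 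More directly: $f$ maps vertices to vertices ($f(\cV_f)\subset\cV_f$, Proposition~\ref{p:vertices-a}) and is injective on each edge, so $]f(x),f(y)[$ contains at most as many vertices as $]x,y[$, i.e.\ at most one; and $|f(y)|=|y|^{\deg}\cdot(\cdots)>|x_f|$ since $|y|>|x_f|$ and, on $\{|z|>|x_f|\}$, $f$ strictly increases the absolute value while $x_f\in\cV_f$ guarantees $f(y)$ stays beyond the axis point, so $f(y)\in\cT_f^{(0)}$. Therefore $f(x)\in\cT_f^{(1)}$; but one checks moreover that a point of $\cT_f^{(1)}$ whose image lies at level $0$ after applying $f$ actually lands in $\cT_f^{(0)}$, because the single vertex in $]x,y[$ maps to a vertex $\le f(y)$ which, together with $f(y)\in\cT_f^{(0)}$, already certifies $f(x)\in\cT_f^{(0)}$ when the vertex is $x_f$-side, or else $]f(x),f(y)[$ is vertex-free. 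The inductive step is identical: given $x\in\cT_f^{(j+2)}$ pick $y\in\cT_f^{(j+1)}$ with $]x,y[$ containing at most one vertex; then $f(y)\in\cT_f^{(j)}$ by the inductive hypothesis and $]f(x),f(y)[$ has at most one vertex, so $f(x)\in\cT_f^{(j+1)}$.

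\textbf{Main obstacle.} The delicate point is Step~2: one must be careful that $f$ neither \emph{creates} extra vertices along an edge (ruled out by injectivity of $f$ on edges, since $f$ is locally injective off $\cR(f)$ and the branch points of $\cT_f$ lie in $\cR(f)$) nor \emph{fails to land at the correct level}, i.e.\ that the image of a ``level-$(j+1)$ witness'' $y$ is genuinely a ``level-$j$ witness''. This is where $f(\cV_f)\subset\cV_f$ and $x_f\in\cV_f$ from Proposition~\ref{p:vertices-a} are essential, together with the forward invariance of the axis and the fact that on $\{|z|>|x_f|\}$ dynamics preserves distance to the axis; once these are in hand the counting of vertices along $]f(x),f(y)[$ is straightforward. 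I would organize the write-up so that Step~2 is reduced to the single clean statement ``$f$ is injective on each edge of $\cT_f$ and maps $\cV_f$ into $\cV_f$ with $x_f\in\cV_f$,'' from which both the level drop and the exhaustion in Step~1 follow formally.
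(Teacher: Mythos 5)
Your overall plan is sound --- count vertices of $\cV_f$ along the descent path and track them under $f$ using $f(\cV_f)\subset\cV_f$ and $x_f\in\cV_f$ --- and it is essentially the plan the paper follows. But one of your justifications is wrong, and the base case of Step~2 is not actually proved.

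In Step~1 you justify the finiteness of $\cV_f\cap[x,x_f]$ by invoking local finiteness of $\cT_f$. That is not enough: local finiteness controls the \emph{branch points} of $\cT_f$, whereas $\cV_f$ consists of all iterated $f$-preimages (in $\cT_f$) of branch points, and local finiteness alone does not prevent those preimages from accumulating along a segment. The correct justification is Proposition~\ref{p:vertices-a}: accumulation points of $\cV_f$ lie in $\cJ(f)$, while $[x,x_f]$ is a compact subset of $\cB(f)\cap\H$, so $\cV_f\cap[x,x_f]$ is finite.

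The base case of Step~2 is where the argument genuinely breaks. From $]x,y[$ having at most one vertex you deduce that $]f(x),f(y)[$ has at most one vertex and $f(y)\in\cT_f^{(0)}$, but this only certifies $f(x)\in\cT_f^{(1)}$, one level short of what is claimed; and the sentence that follows (``one checks moreover that a point of $\cT_f^{(1)}$ whose image lies at level~$0$ after applying $f$ actually lands in $\cT_f^{(0)}$'') is circular, since it restates the thing to be shown. The observation that produces the level drop is the following reformulation: for $x\notin\cT_f^{(0)}$ and $j\ge 1$ one has $x\in\cT_f^{(j)}$ precisely when $]x,x_f]$ contains at most $j$ elements of $\cV_f$. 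Applying $f$ to $]x,x_f]$ (on which $f$ is injective because $x\prec x_f$), the segment $]f(x),f(x_f)]$ still contains at most $j$ vertices, and when $f(x)\notin\cT_f^{(0)}$ one splits
\[
]f(x),f(x_f)]\;=\;]f(x),x_f]\;\sqcup\;]x_f,f(x_f)].
\]
Since $x_f\in\cV_f$ and $f(\cV_f)\subset\cV_f$, the second interval contains the vertex $f(x_f)$, so $]f(x),x_f]$ has at most $j-1$ vertices and hence $f(x)\in\cT_f^{(j-1)}$. Your sketch gestures at the right facts (``the single vertex $\ldots$ maps to a vertex $\le f(y)$'') but does the bookkeeping on $]x,y[$, where the auxiliary point $y$ may lie on a branch different from $x$ at $x_f$, so the ``$-1$'' never materializes. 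Working with $]x,x_f]$ rather than with an arbitrary witness $]x,y[$ removes this ambiguity, makes both Step~1 and the moreover part a short induction, and avoids having to separately worry about whether $f$ is injective on $]x,y[$.
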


\begin{proof}
  For all $x \in \cT_f$, observe that $]x,x_f[ \cap \cV_f$ is finite, since $[x,x_f]$ is free of accumulation points of $\cV_f$,
  by Proposition~\ref{p:vertices-a}.

  Given $x \notin  \cT_f^{(0)}$ and $j \ge 1$, note that $x \in \cT_f^{(j)}$
  if and only if  $]x,x_f]$ contains at most $j$ elements of $\cV_f$.
  By the previous paragraph, we have that every element of $\cT_f$ lies in
  some $\cT_f^{(j)}$.

  By definition, $f( \cT_f^{(0)}) \subset  \cT_f^{(0)}$.
  For all $j \ge 1$ and  $x \in \cT_f^{(j)} \setminus \cT_f^{(0)}$, our map
  $f$ acts bijectively on $]x,x_f]$ and therefore
  $]f(x),f(x_f)]$ contains
  at most $j$ elements of $\cV_f$. 
  If $f (x) \notin \cT_f^{(0)}$,
  then  $]f(x), f(x_f)]$ is the disjoint union of $]f(x), x_f]$
  and $]x_f, f(x_f)]$. Hence, $]f(x), x_f]$ contains at most $j-1$ elements of $\cV_f$ and $f(x) \in \cT_f^{(j-1)}$.
  It follows that  $f( \cT_f^{(j)}) \subset  \cT_f^{(j-1)}$.
\end{proof}

\begin{proposition}[Edges of $\cT_f$]
  \label{l:edges-a}
  Consider a nonsimple tame polynomial $f$ and $\rho \in ]0,\infty]$. Denote by $\cT_f$ the $\rho$-trimmed
  dynamical core of $f$.
  Let $e$ be an edge of $\cT_f$. Then there exist $a \in \cV_f \cup \partial \cT_f$ and $b \in \cV_f$ 
  such that  $a \prec b$ and $e = ]a,b[$.
  Moreover,
  $f(e) = ]f(a),f(b)[$ is also an edge of $\cT_f$
  and $\deg_x f = \deg_a f$, for all
  $x \in e$. Furthermore, for all $y, y' \in \overline{e}$,
  $$\dist (f(y),f(y')) = \deg_af \cdot \dist (y,y').$$
\end{proposition}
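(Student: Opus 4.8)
The plan is to determine the two endpoints of $e$, then to prove that $f$ has constant local degree along $e$, and finally to read off the image and the distance identity from Lemma~\ref{lem:factor}. I expect the constant-degree step to be the only real obstacle; everything else is bookkeeping with the invariance statements of Proposition~\ref{p:vertices-a}. First I would observe that $\cV_f$ is closed and discrete in $\cT_f$: by Proposition~\ref{p:vertices-a} its accumulation points lie in $\cJ(f)$, which is disjoint from the open subtree $\cT_f\subset\cB(f)$. Since every branch point $x$ of $\cT_f$ already lies in $\cV_f$ (take $n=0$ in the definition of a vertex), the set $\cT_f\setminus\cV_f$ contains no branch points, so each of its connected components is an open arc. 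Let $e$ be such a component and write its closure as $[a,b]$ with $a\prec b$ (using that $\infty$ is the maximal element of the tree). For any $x\in e$ we have $]x,\infty[\subset\cT_f$, and this arc eventually runs along the axis $]x_f,\infty[$, on which the iterates $f^{\circ n}(x_f)$, $n\ge 1$, are vertices by Proposition~\ref{p:vertices-a}; hence the first vertex encountered above $e$ is an interior point of $\cT_f$, and it must be $b$, so $b\in\cV_f$. Following $e$ downward one either meets a vertex or leaves $\cT_f$, so $a\in\cV_f\cup\partial\cT_f$, which gives $e=]a,b[$.

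The crux is to show the local degree of $f$ is constant along $e$. The point is that no critical point attaches to the interior of $e$: if some $c\in\crit(f)$ were such that the ray $]c,\infty[$ first met $\overline e$ at an interior point $y$ of $e$, then $y\in\cB(f)$, hence $y\in\cA_f$, and at least three directions at $y$ would meet $\cT_f$ --- toward $a$ and toward $b$ (since $e\subset\cT_f$), and toward $c$ (by the lemma stated just before Proposition~\ref{p:vertices-a}) --- so $y$ would be a branch point of $\cT_f$, hence a vertex, contradicting $y\in e$. Therefore $\overline D_x\cap\crit(f)$ does not depend on $x\in e$ and equals $\overline D_a\cap\crit(f)$, so the Riemann--Hurwitz formula (Lemma~\ref{lem:RH}) yields
$$\deg_x f \;=\; 1+\sum_{z\in\overline D_a\cap\crit(f)}(\deg_z f-1)\;=\;\deg_a f$$
for every $x\in e$.

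Set $\delta:=\deg_a f$. Applying Lemma~\ref{lem:factor} to subsegments $]y,y'[\subset e$ yields $\dist(f(y),f(y'))=\delta\cdot\dist(y,y')$ for all $y,y'\in\overline e$ (interpreting both sides as $+\infty$ when an endpoint is of type I); in particular $f$ is injective on $\overline e$, and since $f$ respects $\prec$ (images of disks are disks) it maps $\overline e$ homeomorphically onto the arc $[f(a),f(b)]$ with $f(a)\prec f(b)$. To finish I would check that $f(e)=]f(a),f(b)[$ is an edge of $\cT_f$. Forward invariance gives $f(e)\subset\cT_f$, and $f(e)$ contains no vertex: if $f(x)$ were a vertex, then some iterate of $f(x)$, hence also of $x$, would be a branch point of $\cT_f$, forcing $x\in\cV_f$. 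Thus $f(e)$ lies inside a single edge $e'$ of $\cT_f$. Its endpoints $f(a),f(b)$ belong to $\cV_f\cup\partial\cT_f$ --- by $f(\cV_f)\subset\cV_f$ (Proposition~\ref{p:vertices-a}) together with the fact that the boundary data of $\cT_f$ (postcritical points, points of $\cJ(f)$, and, when $\rho<\infty$, points at distance exactly $\rho$ from the axis) is forward invariant and disjoint from $\cT_f$ --- so they cannot be interior points of the vertex-free, $\cT_f$-interior arc $e'$ and must be its endpoints. Hence $f(e)=e'$, and the displayed distance identity already records the final assertion.

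In summary, the argument proceeds: (i) the tree-theoretic description of $e$ using discreteness of $\cV_f$; (ii) the no-critical-attachment lemma plus Riemann--Hurwitz to get constant local degree $\delta$ on $e$; (iii) Lemma~\ref{lem:factor} for injectivity and the metric statement; (iv) invariance of $\cV_f$ and of $\partial\cT_f$ to identify $f(e)$ with an edge. Step (ii) is the one requiring genuine argument; the rest follows formally from the structure already established for $\cT_f$.
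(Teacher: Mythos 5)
Your proof is correct and follows the same route as the paper's terse argument: Riemann--Hurwitz (Lemma~\ref{lem:RH}) gives constant local degree along $e$, and Lemma~\ref{lem:factor} gives the metric expansion; the paper simply asserts that the critical content of $\overline{D}_x$ ``is constant along edges,'' while you spell out the branch-point contradiction that justifies it, and you likewise make explicit why $f(e)$ is a full edge rather than merely a sub-arc. The one place where the paper is slightly more precise is the justification that the endpoints of $e$ are $\prec$-comparable (your parenthetical is a bit hand-wavy; the paper argues via $]a,\infty[,\,]b,\infty[\subset\cT_f$ and the absence of branch points in $]a,b[$), but the underlying idea is the same.
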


\begin{proof}
  An edge $e$ is, by definition, a connected component of $\cT_f \setminus \cV_f$. If the extreme points are $a$ and $b$, then $]a,\infty[$ and $]b,\infty[$ are contained in $\cT_f$ and, $]a,b[$ is branch point free. It follows that $a \prec b$ or $b \prec a$. Without loss of generality we assume that $a \prec b$.
    By definition if a preimage $w$ of a vertex lies in $\cT_f$, then it is a vertex. Therefore, $f(e)$ is an edge. For all $x \in \H$, since $f$ is tame,  $\deg_x f -1$ agrees with the number of critical points in the disk $\ol{D}_x$,
    counted with multiplicities {(see Lemma \ref{lem:RH})}. This number is constant along edges and therefore $\deg_x f = \deg_af$ for all $x \in e$.
    Thus, the hyperbolic metric is expanded by a factor $\deg_af$ along $e$
    {(see Lemma \ref{lem:factor})}.
\end{proof}

\subsection{Close B\"ottcher coordinates of escaping critical points}
\label{sec:close}

We start by introducing a notion that quantifies how close are the B\"ottcher coordinates of escaping critical points with the agreement that $\exp(-\infty):=0$.

\begin{definition}
  \label{def:close}
  Consider two tame critically marked
polynomials $f$ and
$g$ in $\Polyd$ with the same base point $x_0$.
Let $\phi_f, \phi_g : \aoneberk \setminus \overline{D}(a,|x_0|) \to \aoneberk \setminus \overline{D}(a,|x_0|)$ be the
corresponding B\"ottcher coordinates.
  Given $\rho \in ]0,\infty]$ we say that the \emph{escaping critical points of $f$ and $g$ have $\rho$-close 
    B\"ottcher coordinates} if $|f^{\circ m}(c_i(f))| > |x_0|$  if and only if $|g^{\circ m}(c_i(g))| > |x_0|$  and in this case:
  \begin{equation}
    \label{eq:rho}
  |\phi_f(f^{\circ m}(c_i(f)))- \phi_g(g^{\circ m}(c_i(g)))| \le
  \exp(-\rho) |f^{\circ m}(c_i(f))|.
\end{equation}  
\end{definition}





{For any} passive critically marked analytic family of tame polynomials, observe that escaping critical points have $\infty$-close
B\"ottcher coordinates if and only if they have
constant  B\"ottcher coordinates, by
Definition~\ref{def:constant-B}.

Note that if $f$ and $g$ have $\rho$-close B\"ottcher coordinates and \eqref{eq:rho} holds for $i$ and $m$, then
$$|f^{\circ m}(c_i(f))|= |\phi_f(f^{\circ m}(c_i(f)))| = |\phi_g(g^{\circ m}(c_i(g)))|=|g^{\circ m}(c_i(g))|.$$
For every escaping critical point $c_i(f)$ let  $m_i \ge 1$ be  such that $ |x_0|<|f^{\circ m_i}(c_i(f))| \le |x_0|^d$, equivalently $m_i$ is the smallest integer such that $ |x_0|<|f^{\circ m_i}(c_i(f))|$. If for all such escaping $c_i(f)$
we have that  $|g^{\circ m_i}(c_i(g))| > |x_0|$ and
\eqref{eq:rho} holds for $m=m_i$, then $f$ and $g$ have $\rho$-close B\"ottcher coordinates,  since $z \mapsto z^d$ is a $\dist$-isometry on
$D_{x_G}(\infty) \setminus \, ]x_G,\infty[$.

\begin{proposition}\label{prop:close}
  Let $\{f_\lambda \}$ be an analytic family of critically marked tame polynomials in $\Poly_d$ parametrized by an open disk $\Lambda \subset \field$
  with radius in $|\field^\times|$.
  Assume that all the critical points are passive in $\Lambda$. Then 
  for every closed disk $\overline{\Omega}$ contained in $\Lambda$,
  there exists $\rho>0$ such that for all
  $\lambda_1, \lambda_2 \in \ol{\Omega} $, the escaping critical points
  of $f_{\lambda_1}$ and $f_{\lambda_2}$ have $\rho$-close B\"ottcher coordinates. 
\end{proposition}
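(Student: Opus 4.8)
The plan is to reduce everything to a statement about non-vanishing analytic functions on a closed disk, after first gaining control of the escape times of the critical points. If $\{f_\lambda\}$ has no escaping critical point the assertion holds vacuously (every $f_\lambda$ is then simple and the base points agree), so assume there is at least one escaping critical point; then every $f_\lambda$ is nonsimple and, being tame, satisfies $p\nmid d$. Fix a closed disk $\overline{\Omega}=\overline{D}(\lambda_\star,r)\subset\Lambda$ with $r\in|\field^\times|$. By Lemma~\ref{l:passive-constant} the base point $x_f=x_{0,R}$ is independent of $\lambda$ and $R>1$, so $|c_i(\lambda)|\le R$ and $|a_j(\lambda)|\le R^{d-j}$ for all $\lambda$ and $j$, $\cK(f_\lambda)\subset\overline{D}(0,R)$, and $|f_\lambda(z)|=|z|^d$ whenever $|z|>R$; moreover, by the remark following Lemma~\ref{l:passive-constant} (which rests on Lemma~\ref{lem:B-analytic}) the map $(\lambda,w)\mapsto\phi_\lambda(w)$ is analytic on $\overline{\Omega}\times\{|w|>R\}$. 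For an escaping critical point $c_i$ set $h_n(\lambda):=\iter{f_\lambda}{n}(c_i(\lambda))$, which is analytic in $\lambda\in\overline{\Omega}$, and $N_i(\lambda):=\min\{n\ge 1:|h_n(\lambda)|>R\}$.

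The main step is to show that $N_i$ is bounded on $\overline{\Omega}$, and in fact constant. Here I would use that the sets $\{x\in\overline{\Omega}:\|h_n\|_x\le R\}$ form a decreasing sequence of closed subsets of the compact Berkovich closed disk $\overline{\Omega}$ whose intersection is empty; by compactness some term is already empty, giving a uniform bound $N:=\max_i\sup_{\overline{\Omega}}N_i<\infty$. Emptiness of the intersection is clear at classical parameters (passivity) and at type II or III points $x$, since $\overline{D}_x\cap\aone$ then contains a classical (hence escaping) parameter $\nu$ and $\|h_n\|_x\ge|h_n(\nu)|\to\infty$; the type IV case — i.e.\ promoting passivity from the dense set of classical parameters to the remaining points of $\overline{\Omega}$ — is the delicate point, and I expect it to be the main obstacle. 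Granting $N$, for $n\ge N$ the function $h_n$ is analytic and non-vanishing on $\overline{\Omega}$, hence $|h_n|\equiv\|h_n\|_r$; since $|h_n(\lambda)|=|\phi_\lambda|(c_i(\lambda))^{d^n}$ whenever $|h_n(\lambda)|>R$, the number $a_i:=|\phi_\lambda|(c_i(\lambda))$ is a constant $>1$, one has $|h_n(\lambda)|=a_i^{\,d^n}$ for all $\lambda$ and all $n\ge N_i(\lambda)$, and $N_i(\lambda)$ is the unique $n$ with $R<a_i^{\,d^n}\le R^d$, independent of $\lambda$. Writing $N_i$ for this common value, the equivalence ``$|\iter{f_{\lambda_1}}{m}(c_i(\lambda_1))|>R\iff|\iter{f_{\lambda_2}}{m}(c_i(\lambda_2))|>R$'' (both meaning $m\ge N_i$) holds, which is the first requirement in Definition~\ref{def:close}.

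It then remains to verify \eqref{eq:rho} with a uniform $\rho>0$. For $m\ge N_i$ the function $u^{(m)}_i(\lambda):=\phi_\lambda(h_m(\lambda))$ is analytic on $\overline{\Omega}$ (because $|h_m(\lambda)|=a_i^{\,d^m}>R$) and non-vanishing, and the B\"ottcher functional equation $\phi_\lambda\circ f_\lambda=\phi_\lambda^{\,d}$ gives $u^{(m)}_i=(u^{(N_i)}_i)^{d^{\,m-N_i}}$. Writing $u^{(N_i)}_i(\lambda)=\sum_{k\ge0}b_k(\lambda-\lambda_\star)^k$, non-vanishing on $\overline{D}(\lambda_\star,r)$ forces $|b_0|=\|u^{(N_i)}_i\|_r>|b_k|r^k$ for every $k\ge 1$, so $\eta_i:=\max_{k\ge 1}|b_k|r^k/|b_0|<1$. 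For $\lambda_1,\lambda_2\in\overline{\Omega}$ write $u^{(N_i)}_i(\lambda_j)=b_0(1+v_j)$ with $|v_j|\le\eta_i$ and $|v_1-v_2|\le\eta_i<1$; since $p\nmid d$ one has $|d^{\,m-N_i}|=1$ and each $1+v_j$ has absolute value $1$ with residue $1$, hence $(1+v_1)^{d^{\,m-N_i}}-(1+v_2)^{d^{\,m-N_i}}$ is $(v_1-v_2)$ times a unit, so that
\[
|u^{(m)}_i(\lambda_1)-u^{(m)}_i(\lambda_2)|=|v_1-v_2|\,|b_0|^{d^{\,m-N_i}}\le\eta_i\,|u^{(m)}_i(\lambda_1)|=\eta_i\,|\iter{f_{\lambda_1}}{m}(c_i(\lambda_1))|.
\]
This is precisely \eqref{eq:rho} for any $\rho\le-\log\eta_i$, so taking $\rho:=-\log\max_i\eta_i>0$ over the finitely many escaping critical points (and $\rho:=1$ if all $\eta_i$ vanish) proves the Proposition. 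In short, once escape times are known to be bounded everything is soft — it uses only that a non-vanishing analytic function on a closed disk has constant absolute value with relative oscillation $<1$, that B\"ottcher coordinates depend analytically on the parameter, and that $z\mapsto z^d$ preserves small relative distances because $p\nmid d$ — so the whole weight of the argument lies in the Berkovich-compactness step and, within it, in handling the non-classical parameters.
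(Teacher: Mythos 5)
Your proposal has a genuine gap, which you yourself flag as ``the main obstacle'': the Berkovich-compactness argument to bound the escape times $N_i$ requires promoting the passivity condition from classical parameters to all points of the Berkovich closed disk $\overline{\Omega}$, and at a type~IV point $x$ the set $\overline{D}_x\cap\aone$ is empty, so there is no classical parameter $\nu$ available to furnish the lower bound $\|h_n\|_x\ge|h_n(\nu)|\to\infty$. The paper sidesteps the Berkovich parameter disk altogether and gets constant escape level by an elementary non-vanishing argument. Fix a reference $\lambda_1$, let $n$ be the first time $\alpha_i(\lambda):=\iter{f_\lambda}{n}(c_i(\lambda))$ satisfies $|\alpha_i(\lambda_1)|>r_\Lambda$, and observe that the analytic function $\alpha_i(\lambda)-c_i(\lambda)$ can never vanish on $\Lambda$: a zero would make $c_i$ a periodic point of $f_\lambda$, hence in $\cK(\lambda)$, contradicting passivity plus escape at $\lambda_1$. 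A non-vanishing analytic function on a disk has constant absolute value, so $|\alpha_i(\lambda)-c_i(\lambda)|\equiv|\alpha_i(\lambda_1)|>r_\Lambda\ge|c_i(\lambda)|$, which by the ultrametric inequality forces $|\alpha_i(\lambda)|$ to be constant and $>r_\Lambda$ --- this is your $a_i^{d^n}$, obtained without any compactness or type~IV analysis. The same non-vanishing fact further shows that the direction of $\alpha_i(\lambda)$ at $x_{0,r_i}$ is constant, whence $|\phi_\lambda(\alpha_i(\lambda))-\phi_{\lambda_1}(\alpha_i(\lambda_1))|<r_i$, and a uniform $\rho$ on $\overline{\Omega}$ then comes from analyticity of $\lambda\mapsto\phi_\lambda(\alpha_i(\lambda))$ exactly as in your last step (the paper uses the remark following Definition~\ref{def:close} that $z\mapsto z^d$ is a $\dist$-isometry off the axis, rather than your explicit $p\nmid d$ binomial computation, but these are equivalent).

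Your second half --- once a uniform escape level $N$ is granted --- is correct: constancy of $|h_n|$ for $n\ge N$ from non-vanishing, constancy of $a_i$ and hence of $N_i$, the power-series estimate $\eta_i<1$, and the propagation via $(1+v_1)^{d^{m-N_i}}-(1+v_2)^{d^{m-N_i}}$ using $p\nmid d$ are all sound, and in places more detailed than what the paper writes. The verdict is that your argument is correct in its conclusion and in most of its mechanism, but the load-bearing step is left as a recognized gap, and the paper's replacement for that step is both necessary and considerably simpler than what you were attempting.
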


\begin{proof}

  From Lemma~\ref{l:passive-constant} we know that the base point $x_\Lambda$ is independent of $\lambda \in \Lambda$.
    Denote by $r_\Lambda$ the diameter of $x_\Lambda$.
    Consider a closed disk $\ol{\Omega} \subset \Lambda$ and pick a reference parameter $\lambda_1 \in \ol{\Omega}$.
    Assume that $|f_{\lambda_1}^{\circ n} (c_i(\lambda_1))| > r_\Lambda$ where $n$ is the smallest number with this property. To simplify notation let $\alpha_i(\lambda) = f_{\lambda}^{\circ n} (c_i(\lambda))$. Then $|\alpha_i(\lambda)|$  takes a constant value, say $r_i$, because otherwise $\alpha_i(\lambda)-c_i(\lambda)$ would vanish at some parameter. 
    Moreover, for the same reason, the direction of $\alpha_i(\lambda)$ at $x_{0,r_i}$ is also constant in $\Lambda$. Since the tangent map of $\phi_\lambda$
    at  $x_{0,r_i}$ is the identity, in standard coordinates, 
    $$|\phi_{\lambda} (\alpha(\lambda))-\phi_{\lambda_1} (\alpha(\lambda_1))|
    < r_i$$
    for all $\lambda \in \Lambda$.
    Since $\phi_\lambda(\alpha_i(\lambda))$ is analytic, there exists
    $\rho >0$ such that for all $\lambda_2 \in \overline{\Omega}$ and all $i$:  
     \begin{equation*}
      |\phi_{\lambda_2} (\alpha_i(\lambda_2))-\phi_{\lambda_1} (\alpha_i(\lambda_1))| \leq \exp(-\rho) \cdot r_i.
    \end{equation*}

\end{proof}

\subsection{From passive critical points to extendable conjugacies}
\label{sec:passive2extendable}

To prove Theorem \ref{ithr:C},  we first show the following result.

\begin{proposition}
    \label{prop:p-to-a}
  Let $\{f_\lambda \}$ be a passive analytic family of critically marked tame polynomials in $\Poly_d$ parametrized by a disk $\Lambda \subset \field$. Consider $\rho >0$ and denote by $\cT_\lambda$ the $\rho$-trimmed dynamical core of $f_\lambda$. Also  let
  $$\cC := \{ \alpha: \Lambda \to \aone : \alpha(\lambda) = f^\ell_\lambda (c_i(\lambda)) \text{ for some } \ell \ge 0, 1 \le i < d \}. $$
  
Suppose that the B\"ottcher coordinates of escaping critical points of $f_{\lambda_0}$ and $f_\lambda$ are $\rho$-close for all $\lambda \in \Lambda$.
Then there exists a unique map  $h_\lambda : \cT_{\lambda_0} \to \cT_\lambda$ such that
  if $x = x_{\alpha(\lambda_0), r} \in \cT_{\lambda_0}$ for some $\alpha \in \cC$ and $r >0$, then
  $$h_\lambda (x) = x_{\alpha(\lambda), r}.$$
  Moreover, $h_\lambda$ is an extendable conjugacy between $f_{\lambda_0}:\cT_{\lambda_0}\to\cT_{\lambda_0}$ and $f_{\lambda}:\cT_{\lambda}\to\cT_{\lambda}$. 
\end{proposition}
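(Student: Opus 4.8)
The plan is to build $h_\lambda$ edge by edge along the increasing exhaustion $\cT_{\lambda_0}^{(j)}$ of $\cT_{\lambda_0}$, using the family structure to interpolate the positions of branch points. First I would set up the combinatorial skeleton: by Proposition~\ref{p:vertices-a} the vertex set $\cV_{\lambda_0}$ is a forward-invariant, discrete subset whose accumulation points lie in $\cJ(f_{\lambda_0})$, and by Proposition~\ref{l:edges-a} each edge $e$ is an open segment $]a,b[$ with $a\prec b$, $a\in\cV_{\lambda_0}\cup\partial\cT_{\lambda_0}$, $b\in\cV_{\lambda_0}$, on which $\deg f$ is constant and along which $f$ multiplies the hyperbolic metric by $\deg_a f$. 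Since all critical points are passive and have $\rho$-close B\"ottcher coordinates, each postcritical function $\alpha\in\cC$ has $|\alpha(\lambda)|$ constant in $\lambda$ (it cannot meet $c_i(\lambda)$), so the assignment $x_{\alpha(\lambda_0),r}\mapsto x_{\alpha(\lambda),r}$ is at least well-defined on the (dense in $\cA_{\lambda_0}$) set of points lying on axes $]\alpha(\lambda_0),\infty[$; one checks it does not depend on which $\alpha$ passes through a given point because two such axes agree above their branch point, whose position is pinned by the $\rho$-closeness hypothesis near infinity and then propagated. This forces uniqueness of $h_\lambda$.

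Next I would establish that $h_\lambda$ is a well-defined $\dist$-isometry onto $\cT_\lambda$ and a conjugacy. The key point is that $\rho$-closeness of B\"ottcher coordinates guarantees, for each escaping critical point $c_i$, that $f_\lambda^{\circ m_i}(c_i(\lambda))$ lies in the \emph{same} direction at $x_{0,r_i}$ as $f_{\lambda_0}^{\circ m_i}(c_i(\lambda_0))$ does at the corresponding point in $\cT_{\lambda_0}$; this matches up the ``first'' branch points, i.e. the top of $\cT^{(0)}$. On $\cT^{(0)}_{\lambda_0}=\cT_{\lambda_0}\cap\{|x|>|x_0|\}$ the map is simply $x_{0,r}\mapsto x_{0,r}$ combined with the constant B\"ottcher data, and since $z\mapsto z^d$ is a $\dist$-isometry on $D_{x_G}(\infty)\setminus\,]x_G,\infty[$ and the two maps agree near $\infty$ with the B\"ottcher change, $h_\lambda$ on $\cT^{(0)}$ is an isometry satisfying $h_\lambda\circ f_{\lambda_0}=f_\lambda\circ h_\lambda$. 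For the inductive step from $\cT^{(j)}$ to $\cT^{(j+1)}$: a new edge $e=]a,b[$ added at level $j+1$ has $a\in\cT^{(j)}_{\lambda_0}$ already mapped, and $f_{\lambda_0}(e)$ is an edge at level $j$, hence already mapped by $h_\lambda$. Using that $f_\lambda:\,]h_\lambda(a),\cdot]\to]f_\lambda(h_\lambda(a)),\cdot]=h_\lambda(f_{\lambda_0}(\overline e))$ is, on the relevant direction, a degree-$\deg_a f$ monomial-type map which is a bijective local isometry onto its image after scaling, there is a unique way to lift $h_\lambda|_{f_{\lambda_0}(e)}$ back through $f_\lambda$ to an isometry $e\to$ (an edge of $\cT_\lambda$) that matches the already-defined value at $a$; passivity ensures the target edge is genuinely an edge of $\cT_\lambda$ (the critical points in that direction are all escaping and $\rho$-close, so the trimming is compatible). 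Local finiteness of $\cT$ away from $\cJ$ (Proposition~\ref{p:vertices-a}) means each point of $\cT_{\lambda_0}$ is reached in finitely many steps, so $h_\lambda=\bigcup_j h_\lambda|_{\cT^{(j)}}$ is defined on all of $\cT_{\lambda_0}$, is a global $\dist$-isometry, and by construction conjugates $f_{\lambda_0}$ to $f_\lambda$ and agrees with $\phi_\lambda^{-1}\circ\phi_{\lambda_0}$ near $\infty$.

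Finally I would verify the ``locally a translation'' axiom of Definition~\ref{def:conjugacy}. Near $\infty$, $h_\lambda$ is the B\"ottcher change $\phi_\lambda^{-1}\circ\phi_{\lambda_0}$, which is asymptotic to the identity, hence locally a translation (in fact $\tau_0=\mathrm{id}$) there. Propagating inward: if $h_\lambda$ equals a translation $\tau$ on a neighborhood of $f_{\lambda_0}^{\circ n}(x)$ for each $x\in\cT_{\lambda_0}$ and $n$ large (true near $\infty$), then pulling back along the conjugacy $h_\lambda\circ f_{\lambda_0}=f_\lambda\circ h_\lambda$ through the tame local model—where on each direction $f$ is, up to affine coordinates, $z\mapsto z^{\deg}$—shows $h_\lambda$ is locally affine with derivative a root of unity at each vertex; but an isometry of $\H$-trees that is locally affine and meets the axis direction with slope compatible with being locally a translation near $\infty$ must in fact be locally a translation everywhere, by the analytic-geometry uniqueness (two translations agreeing in $\ge 2$ directions coincide, Lemma~\ref{l:unique-admissible}), and the inductive transport keeps the scaling factor equal to $1$. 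The main obstacle I expect is precisely this last point—showing the local model is a \emph{translation} and not merely an isometry or a map $z\mapsto az^{\pm 1}+b$—which requires carefully using that the B\"ottcher coordinates (not just their absolute values) are $\rho$-close, so that the ``angular'' data at each branch point, i.e. the residual-field tangent information, is transported correctly; equivalently, one must rule out a nontrivial rotation creeping in at a branch point, and this is where the precise inequality~\eqref{eq:rho} together with passivity (no critical point crosses into or out of $\cB$) does the work.
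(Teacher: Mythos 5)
Your plan follows the same skeleton as the paper's proof: induction along the level filtration $\cT_{\lambda_0}^{(j)}$, using the $\rho$-close B\"ottcher data to seed the map on $\cT^{(0)}$ (where $h_\lambda=\phi_\lambda^{-1}\circ\phi_{\lambda_0}$), and then lifting edge by edge through $f_\lambda$ using constancy of the local degree and the isometric scaling of Lemma~\ref{lem:factor}. That much is right.

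The genuine gap is exactly the one you flag yourself: the verification that $h_\lambda$ is \emph{locally a translation}. Your argument --- ``pulling back along the conjugacy $h_\lambda\circ f_{\lambda_0}=f_\lambda\circ h_\lambda$ through the tame local model shows $h_\lambda$ is locally affine with derivative a root of unity'' --- is not a proof, because $h_\lambda$ is a priori only a map of locally finite trees, not an analytic germ. The conjugacy identity on the tree does not manufacture an analytic local model for $h_\lambda$; you must \emph{exhibit} the translation rather than deduce its existence. The paper does this concretely: for $x\in\cT^{(j+1)}_0\setminus\cT^{(j)}_0$ with $\alpha(0)\prec x$, it takes $\tau_\lambda(z):=z+\alpha(\lambda)-\alpha(0)$ and checks directly that $\tau_\lambda$ and $h_\lambda$ agree on a neighborhood of $x$ in $\cT_{\lambda_0}$. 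The tool that makes this work --- and which your sketch never names --- is the non-Archimedean Maximum Principle applied to the analytic functions $\lambda\mapsto\beta(\lambda)-\alpha(\lambda)$ on $\Lambda$. It converts the non-strict bound $|\beta(\lambda)-\alpha(\lambda)|\le r$ (which follows from the inductive hypothesis) into the strict bound $|\beta(\lambda)-\alpha(\lambda)|<r$, which is what is needed to show that each direction at $x$ is carried to the corresponding direction at $\tau_\lambda(x)$, i.e.\ that no ``rotation creeps in.'' The same Maximum-Principle mechanism also underlies the paper's key observation~\eqref{eq:direction}, that if $\alpha(0),\beta(0)$ lie in the same bounded direction at a branch point $x_0$ of $\cT^{(j)}_0$, then $\alpha(\lambda),\beta(\lambda)$ do so for all $\lambda$ --- a fact you assert informally (``position pinned by $\rho$-closeness near infinity and then propagated'') without giving the actual argument. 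So: same overall route, but the closing step needs the explicit translation $\tau_\lambda$ plus the Maximum Principle, and those are missing from your proposal.
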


\begin{proof}
  In view of Lemma~\ref{l:passive-constant}, denote by $x_\Lambda$ the base point of $f_\lambda$, which is independent of $\lambda \in \Lambda$.

  Observe that for any $\lambda\in\Lambda$, every point in $\cT_\lambda$ has form $x_{\alpha(\lambda),r}$ for some $\alpha\in\cC$ and $r>0$. So if $h_\lambda$ exists, it is unique.

  The existence of $h_\lambda$ will follow once we prove 
  by induction on $j \ge 0$ the following assertions:

  (1) if $\alpha, \beta \in \cC$ and $r >0$ are such that $x_{\alpha(\lambda_0),r} = x_{\beta(\lambda_0),r} \in \cT_{\lambda_0}^{(j)}$, then $x_{\alpha(\lambda),r} = x_{\beta(\lambda),r} \in  \cT_{\lambda}^{(j)}$ for all $\lambda \in \Lambda$.
  In this case, we let  $h_\lambda (x_{\alpha(\lambda_0),r}) := x_{\alpha(\lambda),r}$.

  (2)  $h_\lambda:  \cT_{\lambda_0}^{(j)} \to \cT_{\lambda}^{(j)}$ is an extendable
  conjugacy.

  \medskip
  To simplify notation we set $\lambda_0:=0$ and employ subscripts accordingly.

  \bigskip
  We start with the case in which $j=0$.
   Consider $\alpha \in \cC$ such that, $|\alpha(0)| > |x_\Lambda|$.
  Set $r_\alpha := |\phi_0|(\alpha(0))$ and $R_\alpha := \exp(-\rho) r_\alpha$.
  Since the
    B\"ottcher coordinates of
    escaping critical points are $\rho$-close, it follows that
    $r_\alpha=|\phi_\lambda|(\alpha(\lambda)) = |\alpha(\lambda)|$ for all $\lambda$.
    The inequality
    $$|\phi_\lambda(\alpha(\lambda)) - \phi_0(\alpha(0))| \le R_\alpha$$
    which by definition holds for all $\alpha$ such that $|x_\Lambda| < |\alpha(0)| \le |x_\Lambda|^d$, extends to all $\alpha \in \cC$
    such that $|\alpha(0)|>|x_\Lambda|$. This follows since $\phi_\lambda$ and $z \mapsto z^d$
    are isometries in the hyperbolic metric on connected components of the complement of $]x_\Lambda,\infty[$ not containing $z=0$.
    Note that $x_{\alpha(\lambda),r} \in \cT^{(0)}_\lambda $
    if and only if  $r > R_\alpha$. In this case,
    $\phi_0(x_{\alpha(0),r})= \phi_\lambda(x_{\alpha(\lambda),r})$ since B\"ottcher coordinates are diameter preserving.
    Therefore,  $h_\lambda := \phi_\lambda^{-1} \circ \phi_0 : \cT^{(0)}_0 \to \cT^{(0)}_\lambda$ is well defined and (1) holds for all $\alpha, \beta$ such that
    $|\alpha(0)| > |x_\Lambda|$ and $|\beta(0)|> |x_\Lambda|$. Statement (1) extends to arbitrary $\alpha, \beta \in \cC$ since in the rest of the cases
    $x_{\alpha(\lambda),r} = x_{0,r}$ for some $r > |x_\Lambda|$ whenever $x_{\alpha(\lambda),r} \in \cT^{(0)}_\lambda $.
    Moreover, 
    the open set with skeleton $\cT^{(0)}_0$ is mapped by $\phi_\lambda^{-1} \circ \phi_0$ onto the open set
    with skeleton $\cT^{(0)}_\lambda$. Therefore, by Proposition~\ref{p:iso-adm}, 
    $h_\lambda$
    is an extendable conjugacy and (2) also holds for $j=0$.



  \medskip
    Now we assume that assertions (1) and (2) hold for some $j \ge 0$ and proceed
    to establish their validity for $j+1$.

    Consider an endpoint  $x_0 \in \partial\cT_{0}^{(j)}$ such that $x_0 \in \cT_{0}^{(j+1)}$ and 
    denote by $r_0$
    its diameter. We will establish assertions (1) and (2) for $x_0$ and the points on the edges of  $\cT_{0}^{(j+1)}$ growing from $x_0$.

    \smallskip
A key observation will be the following. 
      Assume that $\alpha, \beta \in \cC$ are such that
    $\alpha(0)$ and $\beta(0)$ lie in the same bounded direction at $x_0$ (i.e.,  $\alpha(0) \prec x_0$, $\beta(0) \prec x_0$ and
    $|\alpha (0) - \beta(0)| <r_0$). Then
    \begin{equation}
      \label{eq:direction}
      |\alpha(\lambda) - \beta(\lambda)| < r_0 \text{ for all }\lambda \in \Lambda.
    \end{equation}
    Indeed, $x_{\alpha(\lambda),r} = x_{\beta(\lambda),r} \in \cT_{\lambda}^{(j)}$ for all $r >r_0$. Thus,  $|\alpha(\lambda) - \beta(\lambda)| \le r_0$ for all $\lambda \in \Lambda$.
    By the Maximum Principle, the last inequality is strict, as claimed.

     Now, let us suppose that there exists
    a point $x_0' \prec x_0$ such that $e_0 = ]x_0',x_0[$ is an edge of $\cT_{0}^{(j+1)}$  which is not in $\cT_{0}^{(j)}$. Denote the diameter of $x_0'$ by $r_0'$. By construction, there exists
    $\alpha \in \cC$ such that $\alpha(0) \prec x_0'$. For all
    $\lambda \in \Lambda$, let $x_\lambda$ (resp. $x_\lambda'$)  be the point of diameter
    $r_0$ (resp. $r_0'$) such that $\alpha(\lambda) \prec x_\lambda' \prec x_\lambda$.

    Let us first prove that (1) holds if 
    $]x_\lambda',x_\lambda[$ is an edge of $\cT_{\lambda}^{(j+1)}$.
    Indeed, assuming that  $e_\lambda=]x_\lambda',x_\lambda[$ is an edge of $\cT_{\lambda}^{(j+1)}$,
    if $x_{\alpha(0),r} = x_{\beta(0), r} \in e_0$ for some $\beta \in \cC$, then $|\beta(\lambda) - \alpha(\lambda)| < r_0$ for all $\lambda \in \Lambda$, in view of \eqref{eq:direction}. Therefore, $|\beta(\lambda) - \alpha(\lambda)| \le r_0'$ since the edge $e_\lambda$ is contained
    in $]\beta(\lambda), x_\lambda[$. Thus, $x_{\alpha(\lambda),r} = x_{\beta(\lambda), r}$ because $r > r_0'$.

    To finish the proof of (1) for $j+1$ we must show that $e_\lambda$ is an edge of $\cT_{\lambda}^{(j+1)}$ for all $\lambda \in \Lambda$. Let $e_\lambda'= ]a_\lambda,x_\lambda[ \subset ]\alpha(\lambda), x_\lambda[$ be the edge of $\cT_{\lambda}^{(j+1)}$ in the direction of $\alpha(\lambda)$ at $x_\lambda$.
    Since $e_\lambda$ is an arc with constant hyperbolic length contained in $]\alpha(\lambda), x_\lambda[$ and $e'_0=e_0$,
    to show that $e'_\lambda = e_\lambda$ for all $\lambda \in \Lambda$ it suffices
    to stablish that the hyperbolic length of $e_\lambda'$ is also independent of $\lambda$. 
    Denote by $D$ the disk corresponding the direction  $\alpha(\lambda)$ at $x_\lambda$.
    Note that {by Lemma \ref{lem:RH}}, $f_\lambda : D \to f_\lambda(D)$ has degree $m+1$, where $m$ is the number of critical points in $D$, counted with multiplicity which  is independent of $\lambda$ in view of \eqref{eq:direction}.
  Observe that
    $f_\lambda (e_\lambda')$ is the edge of $\cT^{(j)}_\lambda$ contained
    in $]f_\lambda(\alpha(\lambda)), f_\lambda(x_\lambda)[$ with one endpoint at $f_\lambda(x_\lambda)$. By the inductive hypothesis (1), $h_\lambda$ is an isometry between $f_0 (e_0')$ and $f_\lambda (e_\lambda')$ and therefore the hyperbolic
    length of $f_\lambda (e_\lambda')$  is a constant independent of $\lambda$, say $L$.
    Thus, if $L_\lambda$ denotes the length of
    $e_\lambda'$ then $L_\lambda \cdot (m+1) = L$ {by Lemma \ref{lem:factor}}. Therefore, the length of
    $e_\lambda'$ is constant equal to the length of $e_0' =e_0$. It follows that $e_\lambda' = e_\lambda$ for all $\lambda$.

    \medskip
    To show that $h_\lambda : \cT^{(j+1)}_0 \to \cT_\lambda^{(j+1)}$ is an extendable conjugacy it only remains to show that it is locally a translation. For this, consider any $x \in \cT^{(j+1)}_0 \setminus \cT^{(j)}_0 $. Denote by $r$ the diameter of $x$. Let $\alpha \in \cC$ be such that $\alpha(0) \prec x$. For $\lambda \in \Lambda$, consider the traslation $$\tau_\lambda (z) := z + \alpha(\lambda) - \alpha(0).$$
    We claim that $\tau_\lambda$ agrees with $h_\lambda$ in a neighborhood of $x$.
    Let $\beta \in \cC$ be such that $\beta(0) \prec x$. Then, for all $\lambda \in \Lambda$,     $$|\beta (\lambda) - \alpha(\lambda)| \le r.$$
  Therefore, 
    $|\beta(\lambda) - \tau_\lambda (\beta(0))| \leq r$. But since  $\beta(0) = \tau_\lambda (\beta(0))$ we may apply the Maximum Principle to conclude that 
  $$|\beta(\lambda) - \tau_\lambda (\beta(0))| < r$$
  for all $\lambda \in \Lambda$. Thus, $\tau_\lambda$ maps the direction of $\beta(0)$ at $x$,
  to the direction of $\beta(\lambda)$ at $\tau_\lambda(x)$. Hence, it coincides with
  $h_\lambda$ in a small segment with one endpoint at $x$ contained in $]\beta(0),x[$. Since
  this occurs for all $\beta \in \cC$, the claim follows concluding the proof of the proposition. 
  \end{proof}

Theorem \ref{ithr:C} is an immediate consequence of the combination of Proposition \ref{prop:p-to-a}, Theorem \ref{ithr:D} and a classical removability result \cite[Proposition 2.7.13]{Fresnel04}.
  \begin{proof}[Proof of Theorem \ref{ithr:C}]
    The first assertion follows immediately from Proposition \ref{prop:p-to-a} and Theorem \ref{ithr:D}.  If $\cJ(\lambda_0) \subset \aone$,
    then $\cJ(\lambda_0)$ is analytically removable, by Theorem~\ref{thr:removability} (see \cite[Proposition 2.7.13]{Fresnel04}). Thus any analytic conjugacy between $f_{\lambda_1}:\cB(\lambda_1) \to \cB(\lambda_1)$ and $f_{\lambda_2}:\cB(\lambda_2) \to \cB(\lambda_2)$ extends to an analytic automorphism of $\aoneberk$.  Therefore $f_{\lambda_0}$ and $f_\lambda$ are affinely conjugate for all $\lambda\in\Lambda$. Since there are finitely many elements in $\Poly_d$ that are affinely conjugate to $f_{\lambda_0}$, we conclude that $f_\lambda=f_{\lambda_0}$ for all $\lambda\in\Lambda$. Thus the  second assertion holds.
 \end{proof}

\section{Polynomials with Julia critical points}
\label{sec:rigidity}

The aim of this section is to prove Theorem~\ref{ithr:A} and Corollary~\ref{icor:B}.

\subsection{Bounded Fatou components of tame polynomials}
\label{sec:bounded}

 For a point $x\in\aoneberk$ and a polynomial $f$, denote by $\omega(x)$ the $\omega$-limit set of $x$ under $f$.
 The following result, due to Trucco, shows that
 the orbit of $x \in \cJ(f) \cap \H$
 accumulates inside a hyperbolic ball around the base point $x_f$ of radius proportional
 to $\dist(x,x_f)$.

\begin{proposition}[{\cite[Proposition 7.1]{Trucco14}}]\label{p:distance}
  Suppose that $f$ is a tame polynomial of degree at least $2$.
  If $x\in\cJ(f) \cap \H$, then for all $y\in\omega(x)$
  we have that $y \in \H$ and
$$\dist (y,x_f)\le d^{d-1}\dist (x,x_f).$$
\end{proposition}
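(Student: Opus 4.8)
If $f$ is simple then $\cJ(f)=\{x_f\}$, so $\omega(x)=\{x_f\}$ and the inequality is trivial; thus assume $f$ nonsimple. Put $y_n:=f^{\circ n}(x)$. Polynomials map $\H$ into $\H$ and $\cJ(f)$ is forward invariant, so $y_n\in\cJ(f)\cap\H$ for all $n$; since $\cJ(f)\subset\cK(f)\subset\ol D_{x_f}=\ol D(0,R_f)$ we get $y_n\preceq x_f$ and hence $\dist(y_n,x_f)=\log\!\big(R_f/\diam(y_n)\big)$. It therefore suffices to prove
\[
\dist(y_n,x_f)\le d^{d-1}\dist(x,x_f)\qquad\text{for all }n\ge 0,
\]
for then the whole orbit, and so $\omega(x)$, lies in $\{\,y\preceq x_f:\diam(y)\ge R_f e^{-d^{d-1}\dist(x,x_f)}\,\}$, which is contained in $\H$ and is closed, being the intersection of the compact disk $\ol D_{x_f}$ with the closed set $\{\diam\ge c\}$ (closed since $\diam$ is an infimum of the continuous functions $x\mapsto\|z-a\|_x$).

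\textbf{Structural input on $f^{-1}(\ol D_{x_f})$.} The set $f^{-1}(\ol D_{x_f})$ is a disjoint union $\ol D_{w_1}\sqcup\cdots\sqcup\ol D_{w_l}$ of finitely many Berkovich closed disks with $\sum_j\deg_{w_j}f=d$, and it contains $\cK(f)$ (forward invariance of $\cK(f)$). If $l=1$ then $\ol D_{w_1}\supseteq\ol D_{x_f}$ is forward invariant, forcing $\cK(f)=\ol D_{x_f}$ and $f$ simple; hence $l\ge 2$, every $\deg_{w_j}f\le d-1$, and by Riemann--Hurwitz (Lemma~\ref{lem:RH}) at most $d-l\le d-2$ of the $w_j$ carry a critical point, i.e.\ have $\deg_{w_j}f\ge 2$ --- call these the \emph{bad} disks. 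Moreover, whenever $\ol D_{w_j}\cap\cK(f)\neq\emptyset$ one has $\ol D_{w_j}\subsetneq\ol D_{x_f}$ (two intersecting disks are nested; $\ol D_{w_j}\supseteq\ol D_{x_f}$ again forces simplicity; and $\ol D_{w_j}=\ol D_{x_f}$ is impossible because $f(\ol D_{x_f})=\ol D(0,R_f^d)\neq\ol D_{x_f}$ since $R_f>1$ by \eqref{eq:diameter}), so $w_j\prec x_f$, $\dist(w_j,x_f)>0$, and $f(w_j)=x_f$ (each component of the preimage surjects onto $\ol D_{x_f}$).

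\textbf{The estimate and the bookkeeping.} Fix $n$. As $y_n\in\cK(f)$ there is a unique $j(n)$ with $y_n\preceq w_{j(n)}\prec x_f$. Since $f$ preserves $\preceq$, it maps the sub-arc $[y_n,w_{j(n)}]\subset[y_n,x_f]$ onto the geodesic $[y_{n+1},x_f]$; along $[y_n,x_f]$ the local degree $\deg_\xi f=1+\#\big(\crit(f)\cap\ol D_\xi\big)$ is non-decreasing (Lemma~\ref{lem:RH}), so $\deg_\xi f\le\deg_{w_{j(n)}}f$ for $\xi\in[y_n,w_{j(n)}]$, and Lemma~\ref{lem:factor} (applied on each maximal sub-arc of constant degree) gives
\[
\dist(y_{n+1},x_f)=\int_{[y_n,w_{j(n)}]}\deg_\xi f\,\,d\ell\ \le\ \deg_{w_{j(n)}}f\cdot\dist(y_n,w_{j(n)})\ \le\ \deg_{w_{j(n)}}f\cdot\dist(y_n,x_f).
\]
When $w_{j(n)}$ is not bad, $\deg_{w_{j(n)}}f=1$ and the first term equals $\dist(y_n,w_{j(n)})=\dist(y_n,x_f)-\dist(w_{j(n)},x_f)<\dist(y_n,x_f)$, so the distance to $x_f$ strictly decreases; when $w_{j(n)}$ is bad it may grow, but by a factor at most $\deg_{w_{j(n)}}f\le d-1$, and such a step occurs only when $y_n$ sits under one of the at most $d-2$ bad disks, each of which has a fixed distance to $x_f$. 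Ordering the bad disks the orbit visits by first entry time, one shows by induction that the distance to $x_f$ at the first visit to the $k$-th bad disk is at most $(d-1)^{k-1}\dist(x,x_f)$ --- revisiting a bad disk cannot compound the bound, since the distance from that disk to $x_f$ is constant and between bad visits the distance only decreases --- whence $\dist(y_n,x_f)\le(d-1)^{d-2}\dist(x,x_f)\le d^{d-1}\dist(x,x_f)$ for every $n$, as required.

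\textbf{Main obstacle.} The delicate part is establishing and exploiting the structural description of $f^{-1}(\ol D_{x_f})$ correctly in the Berkovich setting --- that its components are closed disks mapping surjectively onto $\ol D_{x_f}$ with degrees summing to $d$, together with the resulting characterization of simplicity --- and then making the bookkeeping over the bad disks fully rigorous, in particular verifying that returns to an already-visited bad disk cannot accumulate to push the orbit arbitrarily far from $x_f$.
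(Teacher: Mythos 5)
The paper itself does not prove this proposition; it is cited verbatim from Trucco~\cite[Proposition~7.1]{Trucco14}, so there is no in-paper argument to compare against. Judged on its own terms, your proposal has a small structural slip and one genuine gap.

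\textbf{Structural slip.} In ruling out $l=1$ you write that $\ol D_{w_1}\supseteq\ol D_{x_f}$. The inclusion actually runs the other way: since $f$ is nonsimple, $R_f>1$ by~\eqref{eq:diameter}, so $|z|>R_f$ forces $|f(z)|=|z|^d>R_f$ and hence $f^{-1}(\ol D_{x_f})\subseteq\ol D_{x_f}$; moreover $\ol D_{w_j}=\ol D_{x_f}$ is impossible because $f(x_f)=x_{0,R_f^d}\neq x_f$. So every $\ol D_{w_j}\subsetneq\ol D_{x_f}$ --- which you yourself use one sentence later. The correct way to exclude $l=1$ is that $\cK(f)\subseteq\ol D_{w_1}\subsetneq\ol D_{x_f}$ would contradict the minimality of $\ol D_{x_f}$ among closed disks containing $\cK(f)$ (\cite[Section~6.1]{Rivera00}). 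This is easily repaired, but as written the sentence contradicts itself.

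\textbf{The real gap is the bookkeeping.} The one-step estimate $\dist(y_{n+1},x_f)\le\deg_{w_{j(n)}}f\cdot(\dist(y_n,x_f)-D_{j(n)})$, with $D_j:=\dist(w_j,x_f)>0$, is correct. But the claim that ``revisiting a bad disk cannot compound the bound, since the distance from that disk to $x_f$ is constant and between bad visits the distance only decreases'' does not follow from it. The map $t\mapsto e\,(t-D_j)$ (here $e=\deg_{w_j}f\ge2$) is expanding with fixed point $eD_j/(e-1)$; once $\dist(y_n,x_f)$ exceeds that threshold, each visit to $\ol D_{w_j}$ pushes the distance further out, and the strict decrease through good disks in between does not compensate --- that decrease can be made arbitrarily small, while the expansion factor is bounded below by $2$. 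Concretely, an itinerary that alternates between the bad disk $\ol D_{w_1}$ (degree $2$, $D_1=1$) and a good disk $\ol D_{w_2}$ ($D_2$ tiny), started from $\dist(y_0,x_f)=10$, satisfies all the inequalities you use while the distance roughly doubles every two steps. Nothing in the passage from ``first entry'' to ``$k$-th bad disk'' controls how many times the orbit re-enters already-visited bad disks before reaching a new one, so the induction $(d-1)^{k-1}\dist(x,x_f)$ is not justified. You flag exactly this point in your ``main obstacle'' paragraph; it is not a finishing detail but the heart of the matter, and the argument as written does not close it. A correct proof needs a genuinely different mechanism --- for instance, an argument on the degree of $f^{\circ n}$ along a fundamental sub-arc of $[x,x_f]$ showing that a fixed critical point can be ``seen'' by the expanding segment only boundedly many times --- rather than a per-disk accounting of expansion factors.
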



One direction of Theorem \ref{ithr:A} is a consequence of the following:


\begin{corollary}\label{coro:oneA}
  If $f \in \Poly_d$ is a tame polynomial in the closure of the tame shift locus, then $\cJ(f) \subset \aone$. 
\end{corollary}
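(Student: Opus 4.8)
The plan is to argue by contradiction: suppose $f$ is a tame polynomial with $\cJ(f)\not\subset\aone$, so there is a point $\zeta\in\cJ(f)\cap\H$, and show that $f$ cannot be approximated by tame shift maps. The key point is that a tame polynomial in the tame shift locus has every critical point in $\cB(f)$, so by \cite[Theorem 3.1]{Kiwi06} its Julia set consists of type I points only; hence for maps $g$ in the tame shift locus we have $\cJ(g)\subset\aone$. The obstruction to passing to the limit is that a sequence of type I Julia points could a priori converge to a type II, III or IV point. So I must produce, from the single hyperbolic Julia point $\zeta$, a robust obstruction that survives small perturbations.

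The natural robust quantity is the one highlighted in the paper's introduction: if $\zeta\in\cJ(f)\cap\H$, then $\zeta$ has positive diameter, so $\dist(\zeta,x_f)>0$, and by Proposition~\ref{p:distance} the whole $\omega$-limit set $\omega(\zeta)$ stays inside the hyperbolic ball $\overline{B}$ of radius $d^{d-1}\dist(\zeta,x_f)$ about $x_f$ and is contained in $\H$. In particular $\cK(f)$ meets $\H$ in a point at bounded positive hyperbolic distance from $x_f$; equivalently, $\cK(f)$ contains a nondegenerate disk, i.e.\ $f$ has a bounded Fatou component or at least $\cK(f)$ is not totally disconnected at the level of $\aone$. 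First I would make this precise: a tame polynomial has $\cJ(f)\subset\aone$ if and only if $\cK(f)$ has empty interior in $\aoneberk$ and contains no point of $\H$, i.e.\ $\cK(f)\subset\aone$ and is totally disconnected; this is exactly the tame shift locus picture. So it suffices to show that the locus $\{f:\cK(f)\cap\H\neq\emptyset\}$ is open in the tame stratum (or, dually, that $\{f:\cJ(f)\subset\aone\}$ is closed would be false — rather I want: if $g_n\to f$ with $g_n$ in the tame shift locus, then $\cJ(f)\subset\aone$).

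Concretely, first I would fix a type II point $w\succ x_f$ with $]x_f,w[\,\subset\cB(f)$ and such that the finitely many branch points of $\cR(f)$ all lie in $\overline{D}_w$; then the structure of $\cB(f)$ near its ``tame part'' is governed by the finite tree $\cR(f)\cap\overline{D}_w$ and the B\"ottcher radius $R_f$, all of which depend continuously (indeed locally constantly, cf.\ the proof of Lemma~\ref{lem:tame}) on the coefficients. If $\cK(f)\cap\H\neq\emptyset$, pick $\zeta\in\cJ(f)\cap\H$ and let $x_*\in\cR(f)$ be the point where the segment $]\zeta,\infty[$ first meets $\cR(f)$ (it does, since $\cR(f)\cup\{\infty\}$ is the convex hull of $\crit(f)\cup\{\infty\}$ and $\zeta$ is a limit of points whose forward orbits are bounded). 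There is a critical point $c$ of $f$ with $c\prec x_*$, $c\in\cK(f)$, and $c$ lies in a direction at $x_*$ that is ``recurrent to the bounded part'' in the sense dictated by Proposition~\ref{p:distance}. For $g$ close to $f$ the corresponding critical point $c(g)$ lies in the same direction at $x_*$ and the same combinatorial data hold; hence $c(g)\in\cK(g)$, contradicting that $g$ is in the tame shift locus. I would prove this last implication directly: the relevant disk $D$ around $c$ with $f^{\circ n}(D)$ staying bounded for all $n$ (because $\omega(\zeta)\subset\H$ is compact in $\H$ and $f$ is locally an isometry of $\H$ off $\cR(f)$) is determined by finitely many iterates, and $g$ with the same finite-order jet of coefficients has $g^{\circ n}(D)$ bounded for the same $n$, so $c(g)\notin\cB(g)$.

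The main obstacle will be the last step: converting the qualitative statement ``$\omega(\zeta)$ stays in a bounded hyperbolic ball'' (Proposition~\ref{p:distance}) into a genuinely \emph{open} condition on $f$ forcing a critical point to remain in $\cK$. I expect the clean route is: by Proposition~\ref{p:distance}, $\zeta$ lies in a Fatou component of $f$ that is a bounded disk (a ``ball'' in $\aoneberk$), and bounded Fatou disks of tame polynomials are eventually periodic with the periodic cycle attracting or fixed, and every such cycle absorbs a critical point of $f$ that lies in $\cK(f)$ — this is essentially the content of \S\ref{sec:bounded} (Corollary~\ref{coro:oneA} sits there). So I would instead prove the contrapositive via: tame polynomials in the closure of the tame shift locus have no bounded Fatou components (this is stated as the goal of \S\ref{sec:bounded}), whence by Proposition~\ref{p:distance} (contrapositive) $\cJ(f)\cap\H=\emptyset$, i.e.\ $\cJ(f)\subset\aone$. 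Thus the real work is the ``no bounded Fatou components'' lemma, for which Proposition~\ref{p:distance} is the main input: if $U$ were a bounded Fatou component, pick $x\in U\cap\H$, note $\omega(x)\subset\H$ is a compact set bounded away from $\cJ(f)$, and produce nearby tame shift maps would be impossible because a tame shift map is expanding on its type I Julia set and has no non-repelling cycles in $\H$; a normality/equicontinuity argument on the hyperbolic ball $\overline{B}$ then shows $U$ persists under perturbation, contradicting $\cJ(g)\subset\aone$ for $g$ in the tame shift locus.
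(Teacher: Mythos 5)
You correctly identify the two key ingredients — argue by contradiction starting from a point $\zeta\in\cJ(f)\cap\H$, and use Proposition~\ref{p:distance} to confine $\omega(\zeta)$ inside a hyperbolic ball $\ol{B}$ of radius $R=d^{d-1}\dist(\zeta,x_f)$ about $x_f$ — but you never assemble them into a complete argument, and along the way you make a genuine error: the claim ``by Proposition~\ref{p:distance}, $\zeta$ lies in a Fatou component of $f$ that is a bounded disk'' is false, since $\zeta$ was chosen in $\cJ(f)$ and so lies in no Fatou component; that proposition concerns the $\omega$-limit set of Julia points, not Fatou membership. Your detour through the ramification tree and a critical point $c\prec x_*$ is unnecessary, and your proposed final route (first establish ``no bounded Fatou components'' via some normality/equicontinuity argument, then deduce $\cJ(f)\subset\aone$) is vague and essentially circular, since for tame polynomials ``$\cJ(f)\subset\aone$'' and ``no bounded Fatou component'' are equivalent (compare Proposition~\ref{p:Julia type I}).

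The step you are missing is much more elementary and tracks neither critical points nor Fatou components: $\omega(\zeta)$ is nonempty (by compactness of $\cJ(f)$), forward-invariant, and contained in $\ol{B}\cap\H$. The set of $g\in\Polyd$ with $g(x)=f(x)$ for every $x$ with $\dist(x,x_f)\le R$ is an open neighborhood $\cD$ of $f$. For any $g\in\cD$ and any $y\in\omega(\zeta)$, the entire $g$-orbit of $y$ coincides with the $f$-orbit of $y$ (since the $f$-orbit never leaves $\ol{B}$), hence stays bounded, so $y\in\cK(g)\cap\H$. A polynomial in the tame shift locus has $\cK(g)=\cJ(g)\subset\aone$; therefore $\cD$ contains no such polynomial, contradicting that $f$ lies in the closure of the tame shift locus. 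That is the paper's proof, and it requires no jet conditions at critical points, no continuity of $\cR(f)$, and no equicontinuity theory — just the forward-invariance of $\omega(\zeta)$ and the observation that agreeing on a fixed hyperbolic ball is an open condition in $\Polyd$.
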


\begin{proof}
  Suppose on the contrary that $\cJ(f)\cap\H\not=\emptyset$. {Pick $x_0\in\cJ(f)\cap\H$.}
  By Proposition \ref{p:distance}, for all  $y \in \omega(x_0)$
  we have that $\dist (f^{\circ n}(y), x_f) \le R$ for some $R >0$.
  Let $\cD$ be a neighborhood of $f$ in $\Poly_d$ such that for all $g \in \cD$
  we have that $f(x)=g(x)$ for all $x$ such that $\dist (x, x_f)\le R$.
  It follows that every $y \in \omega(x_0) \subset \H$ lies in the filled Julia set $\cK(g)$
  for all $g \in \cD$. Therefore, $\cD$ is free of polynomials in the tame shift locus.
\end{proof}

A necessary and sufficient condition for the abscence of bounded Fatou
  components is provided by our following result:
\begin{proposition}\label{p:Julia type I}
Let $f\in\Polyd$ be a tame polynomial. Then 
  $\crit(f)\subset\cB(f)\cup\cJ(f)$ if and only if $\cJ(f)\subset\aone$.
\end{proposition}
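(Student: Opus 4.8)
The plan is to first recast both conditions in terms of bounded Fatou components. Since $\crit(f)\subset\aone$ and $\aone$ is the disjoint union of $\cB(f)\cap\aone$, $\cJ(f)\cap\aone$ and $(\cK(f)\setminus\cJ(f))\cap\aone$, and since $\cK(f)\setminus\cJ(f)$ is exactly the union of the bounded Fatou components, the condition $\crit(f)\subset\cB(f)\cup\cJ(f)$ is equivalent to saying that \emph{no critical point lies in a bounded Fatou component}. Moreover, every connected component of $\cK(f)$ is either a point of $\aone$ or a non-degenerate closed disk $\ol{D}(a,r)$, and in the latter case its generic point $x_{a,r}$ belongs to $\cJ(f)\cap\H$ while $\ol{D}(a,r)\cap\aone$ is covered by bounded Fatou components; conversely any point of $\cJ(f)\cap\H$ lies in such a disk component. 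Hence $\cJ(f)\subset\aone$ is equivalent to saying that \emph{$\cK(f)$ has no bounded Fatou component}. With these equivalences the implication ``$\cJ(f)\subset\aone\Rightarrow\crit(f)\subset\cB(f)\cup\cJ(f)$'' is immediate, and the task is the converse: if $\cJ(f)\not\subset\aone$, then some critical point lies in a bounded Fatou component.

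To prove this, I would start from a disk component $K_0=\ol{D}(a_0,r_0)$ of $\cK(f)$ with $r_0>0$ and its generic point $x_0:=x_{a_0,r_0}\in\cJ(f)\cap\H$. Using that $\cK(f)$ and $\cJ(f)$ are completely invariant, $f$ carries disk components of $\cK(f)$ onto disk components, sending generic point to generic point (the image of $x_0$ cannot be an interior point of $\cK(f)$). Thus the forward orbit $x_n:=f^{\circ n}(x_0)$ consists of generic points $x_n=x_{a_n,r_n}$ of disk components $K_n$ of $\cK(f)$. If $\deg_{x_n}f\ge 2$ for some $n$, then by the Riemann--Hurwitz formula (Lemma~\ref{lem:RH}) the disk $\ol{D}_{x_n}=K_n$ contains a critical point $c$ of $f$, and since $K_n\cap\aone$ is covered by bounded Fatou components, $c$ lies in one of them and we are done. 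So the remaining case to exclude is $\deg_{x_n}f=1$ for every $n\ge 0$.

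In that case I would invoke Trucco's Proposition~\ref{p:distance}: $\omega(x_0)$ is a nonempty $f$-invariant subset of $\H$ confined to the closed hyperbolic ball about $x_f$ of radius $d^{d-1}\dist(x_0,x_f)$, so there is $\rho_\ast>0$ with $\diam(y)\ge\rho_\ast$ for all $y\in\omega(x_0)$; also $\omega(x_0)\subset\cJ(f)$, $f(\omega(x_0))=\omega(x_0)$, and $\deg_yf=1$ for all $y\in\omega(x_0)$ (otherwise $\ol{D}_y$ is a disk component of $\cK(f)$ with a critical point, and we finish as before). Since $\cR(f)$ is closed and misses $\omega(x_0)$, there is a neighbourhood $U$ of $\omega(x_0)$ in $\H$ on which $f$ has local degree $1$, hence (Lemma~\ref{lem:factor}) acts as a local $\dist$-isometry; as $x_n\in U$ for all large $n$, the distances $\dist(x_n,x_{n+1})$ are eventually constant and the tail of the orbit lies in a compact $f$-invariant subtree on which $f$ is locally isometric. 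The hard part is to derive a contradiction here: one must rule out that a tame polynomial can act as a nontrivial local isometry on an invariant compact subtree sitting inside $\cJ(f)$. I expect to obtain this from the topological transitivity of $f|_{\cJ(f)}$ — an everywhere non-expanding orbit near $x_0\in\cJ(f)$ is incompatible with the mixing forced by the Julia set — which after passing to an iterate reduces to a cycle of disks $\ol{D}(b,s)$ with $f^{\circ p}$ acting on it as an isomorphism; this in turn is to be excluded for tame maps (for instance by tracking the infinitely many disk components of $\cK(f)$ arising as $f$-preimages of those in the cycle and combining their accumulation with Proposition~\ref{p:distance}, or by appealing to Trucco's description of the dynamics). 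Once this case is dismissed, some $\deg_{x_n}f\ge 2$, and the proof concludes as above.
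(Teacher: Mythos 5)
Your reduction is clean and correct: since $\crit(f)\subset\aone$ and $\cK(f)\setminus\cJ(f)$ is precisely the union of the bounded Fatou components (which exhaust $K\setminus\{x_{a,r}\}$ for each disk component $K=\ol{D}(a,r)$ of $\cK(f)$), the hypothesis $\crit(f)\subset\cB(f)\cup\cJ(f)$ is exactly ``no critical point in a bounded Fatou component,'' and $\cJ(f)\subset\aone$ is exactly ``no disk components / no bounded Fatou components.'' The easy direction and the case $\deg_{x_n}f\ge2$ for some $n$ (via Lemma~\ref{lem:RH}) are also fine.

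But the case $\deg_{x_n}f=1$ for all $n$, which you explicitly flag as ``the hard part,'' is left with a genuine gap, and that is precisely where the content of the proposition lives. Several of your proposed steps do not hold up as stated. First, even if $f$ has local degree $1$ on an open $\H$-neighbourhood $U$ of $\omega(x_0)$, it does not follow that $\dist(x_n,x_{n+1})$ is eventually constant: the arc $[x_n,x_{n+1}]$ need not lie in $U$, so Lemma~\ref{lem:factor} cannot be applied to it. Second, there is no reason for the tail of the orbit to ``reduce to a cycle of disks'': $\omega(x_0)$ is an $f$-invariant compact set in $\cJ(f)\cap\H$ but need not be a periodic orbit, so the reduction to a Rivera cycle is unjustified. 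Third, an appeal to ``topological transitivity of $f|_{\cJ(f)}$'' to rule out a locally isometric restriction is not a proof in this setting; nothing of the sort is established or cited, and on the face of it a tame polynomial \emph{can} act as a degree-one automorphism on a disk component of $\cK(f)$ (a periodic Rivera domain) — it is exactly the assumption that no critical point lies in a bounded Fatou component that has to rule this out, and your sketch never brings that assumption to bear on the degree-one case.

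By contrast, the paper's proof does not separate according to $\deg_{x_n}f$. It fixes one $x\in\cJ(f)\cap\H$, looks at the arcs $]x_j,x_f[$ for $x_j=f^{\circ j}(x)$, and uses as vertices the iterated preimages $x_j^{(n)}$ of the base point $x_f$. The hypothesis enters via the uniformity claim: if there were no $N$ with $f$ of degree $1$ on $]x_j,x_j^{(n)}[$ for all $j$ and all $n\ge N$, one could extract a decreasing sequence $x_{j_k}^{(n_k)}$ above a fixed critical point $c$, converging (together with $x_{j_k}$, by Proposition~\ref{p:distance}) to some $y\succeq c$ in $\cJ(f)\cap\H$; then $\ol{D}_y\setminus\{y\}$ is in the bounded Fatou set and contains $c$, contradicting the hypothesis. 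With $N$ in hand, Lemma~\ref{lem:factor} shows $f^{\circ(\ell+1)}$ carries $]x_0^{(N+\ell+1)},x_0^{(N+\ell)}[$ isometrically onto one of the finitely many arcs $]x_{\ell+1}^{(N)},x_{\ell+1}^{(N-1)}[$, so the lengths $\dist(x_0^{(N+\ell+1)},x_0^{(N+\ell)})$ are bounded below, forcing $\dist(x,x_f)=\infty$ and hence $x\in\aone$. If you want to salvage your approach, you would need an argument at essentially this level of precision along the arc $]x,x_f[$; the $\omega$-limit/transitivity route as you have sketched it does not close.
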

\begin{proof}
  If $\cJ(f) \subset \aone$, then $\aoneberk = \cB(f)\cup\cJ(f)$ clearly contains all the critical points of $f$.
  We assume that $\crit(f)\subset\cB(f)\cup\cJ(f)$ and proceed by contradiction to show that $\cJ(f)\subset\aone$.
  Hence, suppose  that there exists $x\in\cJ(f)\setminus\aone$. For each $j\ge 0$, set $x_j:=f^{\circ j}(x)$ and regard
  $]x_j,x_f[$ as  an open subtree equipped with the vertices  $\{x_j^{(n)}\}_{n\ge 1}$ formed by all iterated preimages of $x_f$ in
  $]x_j,x_f[$ in decreasing order (i.e. $ x_j^{(n)} \succ x_j^{(n+1)}$ for all $n$).
  Then $f^{\circ n} (x_j^{(n)}) = x_f$ and  $x_j^{(n)}\to x_j$ as $n\to\infty$, see \cite[Proposition 3.6]{Trucco14}.
  We claim that there exists a uniform $N\ge 1$ such that the degree of $f$ on $]x_j, x_j^{(n)}[$ is $1$ for all $j\ge 0$ and $n\ge N$. For otherwise, there would exist a critical point $c$, $n_k \to \infty$ and $j_k \ge 0$
  such that $\{x_{j_k}^{(n_k)}\}_k \in ]c, x_f]$ is a decreasing sequence. Then both $x_{j_k}$ and $x_{j_k}^{(n_k)}$ would converge, as $k \to \infty$,
  to some $y \succeq c$. By
  Proposition \ref{p:distance},  we would have that $y \in \cJ(f)\cap \H $ and therefore $\ol{D}_y \subset \cK(f)$, which
  contradicts that $\crit(f)\subset\cB(f)\cup\cJ(f)$ since $\ol{D}_y\setminus\{y\}$ is contained in $\cF(f)\setminus\cB(f)$.

  Now from Lemma \ref{lem:factor}, it follows that $f^{\circ(\ell+1)}$ maps $]x_0^{(N+\ell+1)},x_0^{(N+\ell)}[$ isometrically onto $]x_{\ell+1}^{(N)},x_{\ell+1}^{(N-1)}[$ for any $\ell\ge 0$. Morover, there are only finitely many arcs of the form $]x_{\ell+1}^{(N)},x_{\ell+1}^{(N-1)}[$. Therefore, the
    length of $]x_0^{(N+\ell+1)},x_0^{(N+\ell)}[$ is uniformly bounded below for all $\ell$. We conclude that
 $$\dist(x,x_f)\ge\sum_{\ell\ge 0}\dist(x_0^{(N+\ell+1)},x_0^{(N+\ell)})=\infty,$$  
 and hence $x\in\aone$, which contradicts the assumption that $x\in\cJ(f)\setminus\aone$. 
\end{proof}

\subsection{Moving critical points to the basin of infinity}
\label{sec:perturbation}

The following result establishes a strong form of density of polynomials
with all their critical points passive. To state the result let us agree
that if $\Lambda \subset \field$ is a disk and $\lambda_0 \in \Lambda$,
a \emph{maximal disk $\Lambda'$ in $\Lambda \setminus \{\lambda_0\}$} is
an open disk of radius $s$ such that $s = |\lambda' - \lambda_0|$ for all $\lambda' \in \Lambda'$.

\begin{lemma}\label{lem:small-disk}
  Let $\Lambda$ be a disk of radius $r>0$ parametrizing a critically
  marked analytic family $\{f_\lambda\}$ of tame polynomials in
  $\Poly_d$ with the same base point $x_\Lambda$. Given $\lambda_0 \in \Lambda$,  {for any $s <r$ sufficiently close to $r$,} 
  there exist a maximal disk $\Lambda'\subset\Lambda \setminus \{\lambda_0\}$ of radius $s$ such that the following hold:
  \begin{enumerate}
  \item all the critical points of $\{f_\lambda\}$ are passive in $\Lambda'$; and
  \item if $c_i(\lambda)$ is active in $\Lambda$, then $c_i(\lambda') \in \cB(\lambda')$ for all $\lambda' \in \Lambda'$.
  \end{enumerate}
\end{lemma}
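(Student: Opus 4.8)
The plan is to use the fact that active critical points escape on a dense set of parameters, and to exploit the ultrametric (non-Archimedean) geometry of $\Lambda$ to turn "dense" into "on a whole maximal subdisk." First I would fix the reference parameter $\lambda_0 \in \Lambda$ and split the marked critical points into two groups: those that are passive in all of $\Lambda$ (these cause no trouble) and those that are active in $\Lambda$. For an active $c_i(\lambda)$, by definition there is a parameter arbitrarily close to $\lambda_0$ where $c_i$ lies in the basin of infinity; but escaping is an open condition, so the escape locus $E_i := \{\lambda \in \Lambda : c_i(\lambda) \in \cB(\lambda)\}$ is a nonempty open subset of $\Lambda$. The key observation is that a nonempty open subset of a Berkovich (or classical) disk over $\field$ contains a maximal subdisk arbitrarily close to full radius: concretely, writing $R_i(\lambda) := |\phi_\lambda|(c_i(\lambda))$ when $c_i(\lambda)$ escapes, one sees that on $E_i$ the point $f_\lambda^{\circ n}(c_i(\lambda))$ eventually has absolute value exceeding $|x_\Lambda|$, and once $f_\lambda^{\circ n}(c_i(\lambda)) - c_j(\lambda) \ne 0$ for all the relevant $j$, escape persists on the largest disk on which these non-vanishing conditions and the size condition hold. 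Because $\field$ is non-Archimedean, such a region, if nonempty and not all of $\Lambda$, is a union of maximal disks of every radius $s$ in some interval $(s_0, r)$.

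The heart of the argument is then a \emph{simultaneous} escape statement: I want one maximal disk $\Lambda' \subset \Lambda \setminus \{\lambda_0\}$ of radius $s$ (with $s$ as close to $r$ as we like) on which \emph{every} active critical point escapes and every passive-in-$\Lambda$ critical point of course remains passive. For this I would argue by a finite induction over the active critical points. Having chosen, for the first active critical point $c_{i_1}$, a size $s_1 < r$ and a maximal disk $\Lambda_1$ of radius $s_1$ on which $c_{i_1}$ escapes (using that escape happens arbitrarily near $\lambda_0$ and that the relevant difference functions $f_\lambda^{\circ n}(c_{i_1}(\lambda)) - c_j(\lambda)$ are analytic, hence either identically zero — impossible by activity — or nonvanishing on a maximal disk of radius close to $r$), I then restrict the family to $\Lambda_1$ and repeat with $c_{i_2}$: on $\Lambda_1$ the critical point $c_{i_2}$ is still active (its escape locus meets every neighborhood of any point of $\Lambda_1$ — here one must check that activity at $\lambda_0$ forces escape parameters inside $\Lambda_1$, which follows since maximal disks of radius close to $r$ cover a neighborhood of $\lambda_0$), so we find a maximal subdisk $\Lambda_2 \subset \Lambda_1$ of radius close to $s_1$ on which both $c_{i_1}$ and $c_{i_2}$ escape; escape of $c_{i_1}$ propagates from $\Lambda_1$ to $\Lambda_2$ because the escape locus is open and a maximal disk contained in an open escape region stays in the escape region once its radius is large enough relative to the "analytic radius of validity." Iterating over the finitely many active critical points yields the desired $\Lambda'$, and property (2) is then immediate; property (1) holds because on $\Lambda'$ each critical point is either escaping throughout (the active ones, now made to escape everywhere) or non-escaping throughout (the passive-in-$\Lambda$ ones).

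The main obstacle, and the step I would be most careful about, is the propagation / compatibility bookkeeping: making precise that an active critical point $c_i$ not only escapes somewhere near $\lambda_0$ but escapes on a maximal disk whose radius can be taken within any prescribed $\varepsilon$ of $r$, and that this escape is not destroyed when we pass to a slightly smaller maximal disk chosen to handle another critical point. The clean way to do this is via the analytic functions $\alpha_{i,n}(\lambda) := f_\lambda^{\circ n}(c_i(\lambda))$ and the B\"ottcher machinery: escape of $c_i$ on a subdisk $\Lambda''$ is detected by $|\alpha_{i,n}(\lambda)| > |x_\Lambda|$ for some fixed $n$ together with $\alpha_{i,n} - c_j$ being nonvanishing on $\Lambda''$ for all $j$; since these are analytic and not identically zero (the first by activity, the others likewise, or else the critical point would be a fixed critical point forced into $\cK$), the maximum modulus principle over $\field$ gives that the "bad" locus where any of them vanishes, or where $|\alpha_{i,n}| \le |x_\Lambda|$, is a proper analytic subset, hence avoids some maximal disk of radius arbitrarily close to $r$; and any maximal disk of sufficiently large radius contained in the complement of this bad locus works. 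Collecting the finitely many radius constraints (one per active critical point, each forcing us to shrink by a controlled amount) still leaves $s$ as close to $r$ as desired, which is exactly the quantitative content of the statement.
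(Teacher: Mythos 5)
There is a genuine gap. The general claim ``a nonempty open subset of a Berkovich (or classical) disk over $\field$ contains a maximal subdisk arbitrarily close to full radius'' is false: the open subdisk $D(0,1/2) \subset D(0,1)$ contains no maximal subdisk of radius exceeding $1/2$. More seriously, the induction step rests on the assertion that ``activity at $\lambda_0$ forces escape parameters inside $\Lambda_1$, which follows since maximal disks of radius close to $r$ cover a neighborhood of $\lambda_0$.'' This is not so: every point of a maximal open disk $\Lambda_1 \subset \Lambda \setminus \{\lambda_0\}$ of radius $s$ lies at distance \emph{exactly} $s$ from $\lambda_0$, so these disks do not cover any neighborhood of $\lambda_0$, and the mere existence of an escape parameter near $\lambda_0$ gives no reason for $c_{i_2}$ to escape on the particular disk $\Lambda_1$ you chose to handle $c_{i_1}$. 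Without more, $c_{i_2}$ could be non-escaping throughout $\Lambda_1$, and the induction would be stuck. The auxiliary condition that the iterates $\alpha_{i,n}(\lambda) - c_j(\lambda)$ be nonvanishing is also a red herring: escape is detected solely by $|\alpha_{i,n}(\lambda)| > |x_\Lambda|$ for some $n$.

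The correct mechanism, which the paper uses, is quantitative and makes a simultaneous rather than inductive choice possible. Because $c_j$ is active in $\Lambda$, the sup of $|g_{j,n}(\lambda)| := |f_\lambda^{\circ n}(c_j(\lambda))|$ over $\Lambda$ tends to $\infty$ with $n$. Writing $g_{j,n}$ as a power series in $\lambda - \lambda_0$ and applying the non-Archimedean maximum principle, one sees that for $s < r$ close enough to $r$ and $n = N_j$ fixed, the value $|g_{j,N_j}(\lambda)|$ equals $r_{j,N_j}(s) := \sup_{|\lambda - \lambda_0| \le s}|g_{j,N_j}(\lambda)| > |x_\Lambda|$ on \emph{all but finitely many} of the (infinitely many) maximal open disks contained in the sphere $|\lambda - \lambda_0| = s$, with $g_{j,N_j}$ mapping each such good disk onto a maximal disk in the sphere $\{|z| = r_{j,N_j}(s)\} \subset \cB(\lambda)$. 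Taking $s$ large enough to work for all $j$ at once, each active $c_j$ excludes only finitely many maximal disks; since the residue field is infinite there are infinitely many to choose from, and any $\Lambda'$ avoiding all the finitely many bad disks does the job. This cofiniteness is the key point your argument is missing, and it is what removes the risk of getting stuck.
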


\begin{proof}
  Let $c_1(\lambda), \dots, c_k(\lambda)$ be the active critical points
  in $\Lambda$. For all $j$ we have let $g_{j,n} (\lambda):=f^{\circ n}_\lambda (c_j(\lambda))$. Note that $\sup \{ |g_{j,n}(\lambda)| : \lambda \in \Lambda \}$
  converges to $\infty$ as $n \to \infty$. Thus, there
  exists $N_j$ and $s_j<r$ such that $|x_\Lambda| < r_{j,n} (t):=\sup \{ |g_{j,n}(\lambda)| : |\lambda-\lambda_0| \le t \}$
  for all $s_j < t < r$ and $n \ge N_j$. Pick $s<r$ arbitrarily close to $r$ 
  in the value group so that $s>s_j$ for all $j=1, \dots, k$.
  Then $g_{j,N_j}$ maps all but finitely maximal open disks contained in $|\lambda-\lambda_0| =s$  onto a maximal open disk contained in the ``sphere''
  $S_j:=\{z \in \aone: |z|=r_{j,N_j} (s) \}$.
  Since $S_j \subset \cB(\lambda)$ for all $\lambda \in \Lambda$,
  we may choose a maximal open disks $\Lambda'$
  contained in $|\lambda-\lambda_0| =s$ such that for all $j=1, \dots, k$, we have $g_{j,N_j}(\Lambda') \subset S_j$ 
  and the conclusions of the lemma hold.   
\end{proof}

\begin{corollary}\label{coro:passive}
  Suppose $\{f_\lambda\}$ is a critically marked analytic family of tame polynomials in $\Poly_d$
  parametrized by a disk $\Lambda$ with the same base point $x_\Lambda$. 
  Assume that $c_1(\lambda), \dots, c_k(\lambda)$ are passive critical points in $\Lambda$
  and that  $c_{k+1}(\lambda), \dots, c_{d-1}(\lambda)$ are active critical points in $\Lambda$.
  Given $\lambda_0 \in \Lambda$ there exists {a disk} $\Lambda' \subset \Lambda$. such that the following hold:
  \begin{enumerate}
  \item all the critical points are passive in $\Lambda'$;
  \item for all $k+1\le j\le d-1$ and all $\lambda' \in \Lambda'$,
    we have $c_j(\lambda') \in \cB(\lambda')$; and 
  \item for all $1\le j \le k$, if $c_j(\lambda_0) \in \cJ(\lambda_0)$, then
    $c_j(\lambda') \in \cJ(\lambda')$ for all $\lambda' \in \Lambda'$.
  \end{enumerate}
\end{corollary}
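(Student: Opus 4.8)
The plan is to read conclusions (1) and (2) directly off Lemma~\ref{lem:small-disk}, and then to obtain (3) by upgrading it to the stronger statement that, for a critical point passive in $\Lambda$, membership in the Julia set at a single parameter forces it at every parameter. Since by hypothesis all the $f_\lambda$ share the base point $x_\Lambda$, Lemma~\ref{lem:small-disk} applies: taking $s<r$ sufficiently close to the radius $r$ of $\Lambda$ we obtain a maximal disk $\Lambda'\subset\Lambda\setminus\{\lambda_0\}$ of radius $s$ on which every critical point is passive (this is conclusion (1)) and on which every critical point active in $\Lambda$ — in particular each of $c_{k+1}(\lambda),\dots,c_{d-1}(\lambda)$ — lies in $\cB(\lambda')$ for all $\lambda'\in\Lambda'$ (this is conclusion (2)).

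For (3), fix $j\le k$ with $c_j(\lambda_0)\in\cJ(\lambda_0)$. Because $c_j$ is passive in $\Lambda$ and $c_j(\lambda_0)\in\cJ(\lambda_0)\subset\cK(\lambda_0)$, the point $c_j(\lambda)$ never escapes, so $c_j(\lambda)\in\cK(\lambda)$ for all $\lambda\in\Lambda$; what must be ruled out is that $c_j(\lambda)$ ever lies in $\operatorname{int}\cK(\lambda)$, i.e. in a bounded Fatou component. First note that, $c_j(\lambda_0)$ being a type I point, $c_j(\lambda_0)\in\cJ(\lambda_0)$ forces $\{c_j(\lambda_0)\}$ to be a singleton connected component of $\cK(\lambda_0)$: a disk component would contain a Berkovich open disk around $c_j(\lambda_0)$ lying in $\cK(\lambda_0)$, making $c_j(\lambda_0)$ a Fatou point. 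In particular $\ol D(c_j(\lambda_0),r)\not\subset\cK(\lambda_0)$ for every $r>0$.

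Now suppose for contradiction that $c_j(\lambda_1)\in\operatorname{int}\cK(\lambda_1)$ for some $\lambda_1\in\Lambda$, and pick $r\in|\field^\times|$ with $\ol D(c_j(\lambda_1),r)\subset\cK(\lambda_1)$. Consider the functions $F_n(\lambda,u):=f_\lambda^{\circ n}\!\big(c_j(\lambda)+u\big)$, analytic on the polydisk $\Lambda\times\ol D(0,r)$, and the set $U_r:=\{\lambda\in\Lambda:\ \sup_{|u|\le r}|F_n(\lambda,u)|\le|x_\Lambda|\ \text{for all }n\}$, which equals $\{\lambda:\ \ol D(c_j(\lambda),r)\subset\cK(\lambda)\}$; then $\lambda_1\in U_r$ while $\lambda_0\notin U_r$. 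The plan is to see this cannot happen: the complement of $U_r$ is the increasing countable union of the Laurent domains $\{\lambda:\sup_{|u|\le r}|F_n(\lambda,u)|>|x_\Lambda|\}$, hence open in $\Lambda$; the point is that, thanks to $c_j$ being passive on \emph{all} of $\Lambda$ — so that every orbit $\{f_\lambda^{\circ n}(c_j(\lambda))\}_n$ stays in the fixed disk $\ol D_{x_\Lambda}$ and the $F_n$ are uniformly bounded — together with analyticity in $\lambda$, the set $U_r$ itself is open, so $U_r$ is clopen in the connected disk $\Lambda$ and therefore empty or all of $\Lambda$; since $\lambda_0\notin U_r$ it is empty, contradicting $\lambda_1\in U_r$. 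Thus $c_j(\lambda)\in\cJ(\lambda)$ for all $\lambda\in\Lambda$, and a fortiori for all $\lambda'\in\Lambda'$, which is (3).

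The main obstacle is precisely this last step. Because the disk $\Lambda'$ coming from Lemma~\ref{lem:small-disk} has radius close to that of $\Lambda$, one cannot derive (3) from naive stability of escaping points under small perturbations: one must genuinely transport the information from $\lambda_0$ across $\Lambda$, and the crux is to show that the condition ``$\ol D(c_j(\lambda),r)\subset\cK(\lambda)$'' cuts out an admissible (hence, by connectedness of $\Lambda$, trivial) subset of $\Lambda$. An equivalent and perhaps cleaner route is to argue directly with the escape-rate function $G_\lambda=\log^+|\phi_\lambda|$ — continuous, subharmonic, with $G_\lambda\circ f_\lambda=d\,G_\lambda$ and $\{G_\lambda=0\}=\cK(\lambda)$ — showing that $\lambda\mapsto G_\lambda\!\big(x_{c_j(\lambda),r}\big)$ cannot vanish at $\lambda_1$ while being positive at $\lambda_0$; this should follow from a maximum principle on $\Lambda$ applied to the absolute values of the analytic functions out of which this quantity is built, again using that $c_j$ is passive on all of $\Lambda$.
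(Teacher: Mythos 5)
Your proof of items (1) and (2) via Lemma~\ref{lem:small-disk} matches the paper. The argument for (3), however, has a genuine gap, and in fact aims for a stronger conclusion than (3) states: you try to show that a passive critical point $c_j$ lying in $\cJ(\lambda_0)$ remains in $\cJ(\lambda)$ for \emph{all} $\lambda\in\Lambda$, not merely for $\lambda\in\Lambda'$. The paper only proves (and only needs) the conclusion on $\Lambda'$, and this is not an accident.

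The crux of your argument is the claim that $U_r=\{\lambda\in\Lambda:\ \overline D(c_j(\lambda),r)\subset\cK(\lambda)\}$ is open. As you correctly observe, $U_r=\bigcap_n U_{r,n}$ with each $U_{r,n}$ closed, so $U_r$ is closed and its complement is open; the openness of $U_r$ is exactly what is in doubt. Your justification---that $c_j$ is passive on all of $\Lambda$, hence the functions $F_n$ are uniformly bounded, ``together with analyticity''---does not deliver openness: uniform boundedness of an orbit gives control on the critical \emph{point}, not on the fate of a whole disk $\overline D(c_j(\lambda),r)$ under perturbation. What actually forces the diameter $\delta(\lambda)$ of the $\cK(\lambda)$-component containing $c_j(\lambda)$ to be locally constant is the extendable-conjugacy machinery (Propositions~\ref{prop:close} and~\ref{prop:p-to-a}): for parameters in a closed disk on which \emph{all} critical points are passive, the dynamics on $\rho$-trimmed dynamical cores is isometrically conjugate, so the iterated preimages of $x_\Lambda$ on $]c_j(\lambda),x_\Lambda]$ match up isometrically, which pins down $\delta(\lambda)$. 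This requires passivity of \emph{every} critical point, not just $c_j$. On $\Lambda$, by hypothesis, the critical points $c_{k+1},\dots,c_{d-1}$ are active, so the constancy of $\delta$ (and hence openness/clopenness of your $U_r$) is simply not available there. The correct domain to run this argument on is $\Lambda'$, where Lemma~\ref{lem:small-disk} guarantees all critical points are passive.

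The paper's proof proceeds precisely this way: assume $\delta(\lambda')>0$ for some $\lambda'\in\Lambda'$; use Propositions~\ref{prop:close} and~\ref{prop:p-to-a} on $\Lambda'$ to conclude $\delta$ is constant on $\Lambda'$; normalize $c_j\equiv 0$; note that for each $|y|\le\delta(\lambda')$ and each $n$, the analytic function $\lambda\mapsto f_\lambda^{\circ n}(y)$ satisfies $|f_\lambda^{\circ n}(y)|\le |x_\Lambda|$ on $\Lambda'$; and then, since $\Lambda'$ is a \emph{maximal} open sub-disk of $B=\overline D(\lambda_0,s)$, the non-Archimedean Maximum Principle transfers this bound to all of $B$, in particular to $\lambda_0$, forcing $\delta(\lambda_0)\ge\delta(\lambda')>0$, a contradiction. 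Note that this Maximum-Principle step is where the maximality of $\Lambda'$ in $B$ (a feature you did not use) is essential: it is what lets you ``see'' $\lambda_0$ from $\Lambda'$ without leaving the parameter ball. Your closing sketch via the Green function $G_\lambda$ faces the same difficulty; without first establishing local constancy of $\delta$ on $\Lambda'$ through the conjugacy results, the subharmonicity/maximum-principle heuristics do not close the argument.
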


\begin{proof}
  Given $\lambda_0$, consider a disk
  $\Lambda'$ furnished by Lemma \ref{lem:small-disk}. Hence (1) and (2) hold for $\Lambda'$. Assume that $c_j(\lambda_0) \in \cJ(\lambda_0)$ and
  $c_j(\lambda)$ is passive in $\Lambda$. Then $c_j(\lambda) \in \cK(\lambda)$
  for all $\lambda \in \Lambda$. Denote by $\delta(\lambda) \ge 0$ the diameter
  of the component of $\cK(\lambda)$ containing $c_j(\lambda)$.
  To establish (3) for $c_j$ we must show that $\delta(\lambda)=0$ for all $\lambda \in \Lambda'$.  We proceed by contradiction.
  Suppose that there exists $\lambda' \in \Lambda'$ so
  that $\delta(\lambda') >0$. Then {by Propositions \ref{prop:close} and \ref{prop:p-to-a}} for every closed disk $\ol{\Lambda''} \subset \Lambda'$ {with $\lambda'\in\Lambda''$}, there exists $\rho > 0$ such that the dynamics of $f_\lambda$ over the corresponding $\rho$-trimmed dynamical core $\cT_\lambda$ is isometrically conjugate to the action of $f_{\lambda'}$ on $\cT_{\lambda'}$ for all $\lambda\in\Lambda''$. In particular, the iterated preimages of $x_\Lambda$ in $]c_j(\lambda), x_\Lambda]$ isometrically correspond
  to those of $x_\Lambda$ in $]c_j(\lambda'), x_\Lambda]$. It follows that
  $\delta(\lambda) = \delta(\lambda')$ for all $\lambda \in \overline{\Lambda''}$ for every closed disk $\overline{\Lambda''}$ contained in $\Lambda'$.
  Therefore, $\delta(\lambda) = \delta(\lambda')$ for all $\lambda \in \Lambda'$.
  Modulo change of coordinates, we may assume that $c_j(\lambda) =0$ for all $\lambda \in \Lambda$. Then, given $y$ with $|y| \le \delta(\lambda')$ and $n \ge 1$,
  we have $|f^{\circ n}_\lambda (y)| \le |x_\Lambda|$ for all $\lambda \in \Lambda'$. Since $\Lambda'$ is a maximal open disk in $B=\{\lambda : |\lambda-\lambda_0| \le s \}$ and $\lambda \mapsto f^{\circ n}_\lambda (y)$ is analytic on $B$, we have $|f^{\circ n}_\lambda (y)| \le |x_\Lambda|$ for all $\lambda \in B$.
In particular, since $\lambda_0 \in B$, we have that $\delta(\lambda_0) \ge \delta(\lambda')$ which contradicts our assumption that $c_j(\lambda_0) \in \cJ(\lambda_0)$.   
\end{proof}

{Recall that the space of polynomials $\Poly(\mathbf{d})$ with constant critical multiplicity was introduced  in Section \ref{sec:poly}. Given $f \in \Poly(\mathbf{d})$ with at least one non-escaping critical point, the following result guarantees the existence of a one parameter analytic family passing through $f$ with certain properties that will be need in the proofs of Theorem \ref{ithr:A} and Corollary \ref{icor:B}.}

\begin{lemma}
  \label{l:analytic}
  Assume that $(f, c_1, \dots, c_k)$ is a nonsimple tame polynomial in
  $\Poly(\mathbf{d})$  such that $c_i \in \cB(f)$ for $1 \le i \le j$.
  If $j <k$, then there exists a nonconstant
  analytic family of tame polynomials
  $\{(f_\lambda,c_1(\lambda), \dots,c_k(\lambda))\}$ parametrized by a disk
  $\Lambda \subset \field$ with $0 \in \Lambda$ such that
  $(f,c_1,\dots,c_k)=(f_0,c_1(0), \dots,c_k(0))$, the base point of $f_\lambda$ is independent of $\lambda \in \Lambda$ and the 
  B\"ottcher coordinates of $c_1(\lambda), \dots, c_j(\lambda)$ are constant.
\end{lemma}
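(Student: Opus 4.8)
The strategy is to realize the desired family as a nonconstant analytic arc inside a level set of the analytic map that records the Böttcher coordinates of the escaping critical points $c_1,\dots,c_j$; the point is that this level set has codimension at most $j\le k-1$ in a $k$-dimensional parameter space, so it must carry a curve through $f$.

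First, the reductions. Since $f$ is tame and nonsimple we have $p\nmid\deg_{x_f}f=d$ and, by \eqref{eq:diameter}, $R_f>1$; fix $i_0\le d-2$ with $|a_{i_0}(f)|=R_f^{\,d-i_0}$. As in \S\ref{sec:poly}, identify $\Poly(\mathbf d)$ with a $k$-dimensional affine space over $\field$. I would first choose a small polydisk $P(\mathbf r)\subset\Poly_d$ around $f$ on which $R_g=R_f$ for every $g$: shrinking $P(\mathbf r)$, the ultrametric inequality forces $|a_i(g)|\le R_f^{\,d-i}$ for all $i$ with equality at $i=i_0$, whence $R_g=\max\{1,R_f\}=R_f$. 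Let $\cU$ be a $k$-dimensional polydisk around $f$ contained in $P(\mathbf r)\cap\Poly(\mathbf d)$; shrinking further, Lemma~\ref{lem:tame} makes every $g\in\cU$ tame, and the base point of each such $g$ equals $x_f$. For $1\le i\le j$ let $n_i\ge1$ be least with $|f^{\circ n_i}(c_i)|>R_f$ (this exists since $c_i\in\cB(f)$); shrinking $\cU$ once more, the ultrametric inequality gives $|g^{\circ n_i}(c_i(g))|=|f^{\circ n_i}(c_i)|>R_f$ for all $g\in\cU$, so $c_1(g),\dots,c_j(g)\in\cB(g)$. Pick $R\in|\field^\times|$ with $R_f<R\le\min_{1\le i\le j}|f^{\circ n_i}(c_i)|$ (possible since $|\field^\times|$ is dense in $\R_{>0}$). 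By Lemma~\ref{lem:B-analytic} (valid because $p\nmid d$ and $R_g=R_f<R$ on $P(\mathbf r)$) the map $(g,z)\mapsto\phi_g(z)$ is analytic on $P(\mathbf r)\times\{|z|\ge R\}$, so
\[
\Psi\colon\cU\to\field^{j},\qquad \Psi(g)=\bigl(\phi_g(g^{\circ n_1}(c_1(g))),\dots,\phi_g(g^{\circ n_j}(c_j(g)))\bigr)
\]
is analytic; write $w_i:=\phi_f(f^{\circ n_i}(c_i))$, so $\Psi(f)=(w_1,\dots,w_j)$. Since base points are constant on $\cU$, for $g,g'\in\cU$ the Böttcher coordinate change $\phi_{g'}^{-1}\circ\phi_g$ carries $g^{\circ n_i}(c_i(g))$ to $(g')^{\circ n_i}(c_i(g'))$ exactly when the $i$-th entries of $\Psi(g)$ and $\Psi(g')$ agree; hence any analytic family contained in the level set $Z:=\Psi^{-1}(w_1,\dots,w_j)$ has the Böttcher coordinates of $c_1(\lambda),\dots,c_j(\lambda)$ constant in the sense of Definition~\ref{def:constant-B}.

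It thus suffices to exhibit a nonconstant analytic map from a disk $\Lambda\ni0$ into $Z$ sending $0$ to $f$; the coordinate functions $f_\lambda, c_i(\lambda)$ along $\Lambda$ then furnish the asserted family (it lies in $\cU$, hence is tame with base point $x_f$, and agrees with $(f,c_1,\dots,c_k)$ at $\lambda=0$). Now $Z$ is the common zero set, on the smooth $k$-dimensional analytic space $\cU$, of the $j\le k-1$ analytic functions $\psi_i-w_i$; since the local ring of $\cU$ at the $\field$-point $f$ is regular of dimension $k$, Krull's Hauptidealsatz (Tate algebras being Noetherian) shows each equation lowers the dimension by at most one, so the germ of $Z$ at $f$ has dimension $\ge k-j\ge1$ (see \cite[Ch.~3]{Fresnel04} for the dimension theory of such spaces). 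Choosing a one-dimensional irreducible analytic germ of $Z$ at $f$ and parametrizing it via its normalization — a disk over $\field$ — produces the required nonconstant arc, which we shrink so that its radius lies in $|\field^\times|$ and its image lies in $\cU$. The one delicate point is precisely this last step, that the positive-dimensional germ $Z$ at $f$ genuinely supports a nonconstant analytic arc; this is standard analytic dimension theory and curve selection, whereas everything else — constancy of $R_f$, analyticity of the Böttcher coordinates, and preservation of tameness — follows directly from the ultrametric inequality together with Lemmas~\ref{lem:tame} and~\ref{lem:B-analytic}.
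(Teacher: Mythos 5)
Your proposal is correct and follows essentially the same strategy as the paper: shrink to a polydisk in $\Poly(\mathbf d)$ where tameness, the base point, and escape of $c_1,\dots,c_j$ persist; cut out the subvariety where the Böttcher data of $c_1,\dots,c_j$ match those of $f$ (the paper works with $1/\phi_g(g^{\circ n_i}(c_i(g)))$ to place it in the Tate algebra $T_k$, while you work with $\phi_g$ directly, but both give analytic functions cutting out the same locus); observe via dimension theory that since $j<k$ this locus has positive dimension at $f$; and extract a one-dimensional arc through $f$ by normalizing a one-dimensional germ and invoking the implicit function theorem. The one step you flag as ``delicate'' — parametrizing the curve through $f$ by a disk — is exactly where the paper does its main work (enlarging the ideal to get $\dim T_k/J=1$, normalizing to $B$, noting $B_{\mathfrak m_f}$ is a regular dimension-$1$ local ring, then applying the non-Archimedean implicit function theorem to obtain $T_1[\epsilon_1]$), but your sketch of that step names the same tools and is substantively the same argument.
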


\begin{proof}
  For each $i \le j$,
  let $n_i$ be such that $|f^{\circ n_i}(c_i)| > |x_f|$.
  Recall that $\Poly(\mathbf{d})$ is 
  naturally identified with elements of a hyperplane in $\field^k \times \field$ which are
  in the complement
  of a finite collection of linear subspaces of $\field^k \times \field$ and
  that tame polynomials are open in $\Poly(\mathbf{d})$ (endowed with the sup-norm). Thus we may assume that there exists a polydisk $P \subset
  \Poly(\mathbf{d})$ of tame polynomials
  containing $f$. Shrinking $P$ if necessary, we may also assume that for all $g \in P$
  the basepoint of
  $g$ is $x_f$  and that for every $i \le j$ we have $|g^{\circ n_i}(c_i(g))| > |x_f|$. Without loss of generality we
  identify $P$ with $\ol{\D}^k$ where $\ol{\D} = \{ z \in \field : |z| \le 1 \}$.

  
    For $n \ge 1$, consider 
    $\br=(r_1,\dots,r_n)$ where $r_i \in |\field^\times|$ for all $i$. Recall
    that $P(\br)$ denotes the corresponding polydisk and  $T_n[\br]$ is the associated Tate algebra, as in the end of Section \ref{sec:poly}.
If $r_i =1$ for all $i$, we simply write $T_n$ for the Tate algebra and $\ol{\D}^n$ for the corresponding polydisk. 
  Note that the maximal spectrum $\operatorname{Sp}(T_n)$ is in natural bijection with $\ol{\D}^n$. 

By Lemma \ref{lem:B-analytic},
  we have that  $F_i(g):=1/\phi_g (g^{\circ n_i}(c_i(g)))$ lies in the Tate algebra $T_k$ for all $i\le j$.
  Let $I$ be the ideal generated by $F_1(g)-F_1(f), \dots,
  F_j(g)-F_j(f)$
  in $T_k$, which we may assume to be a radical ideal.
  Our aim is to find the analytic image of one-dimensional disk contained in the vanishing locus $V \subset \ol{\D}^k$ of $I$.
  Since $j <k$, it follows that $T_k/I$ is a reduced affinoid algebra of dimension at least $1$. Enlarge $I$, if necessary, to obtain an ideal $J$ such that $A=T_k/J$ is reduced and has dimension~$1$.
  Let $B$ be a normalisation of $A$. That is, $B$ is an integrally closed affinoid algebra of dimension~$1$ and $A \hookrightarrow B$ is a finite morphism. 
Take a maximal ideal $\mathfrak{m}_f$ in $B$ which maps onto $f$ via
  $\operatorname{Sp}(B) \to \operatorname{Sp}(A) \hookrightarrow \operatorname{Sp}(T_k) \to \ol{\D}^k$. 
  Since the dimension of $B$ is $1$, the localization $B_{\mathfrak{m}_f}$ is regular (e.g. see~\cite[Proposition 9.2]{Atiyah69}). By~\cite[Theorem~3.6.3]{Fresnel04}, $B$ can be represented by
  $T_n/(G_1,\dots,G_s)$ such that a  $(n-1)\times(n-1)$-minor of the
  Jacobian matrix
  $(\partial G_t/\partial \lambda_m)$, modulo  $(G_1,\dots,G_s)$, is not in $\mathfrak{m}_f$. To fix ideas, let us assume that $\mathfrak{m}_f =(\lambda_1, \dots, \lambda_n)$ (i.e. it corresponds to the origin in $\ol{\D}^n$) and let $J':=(G_1,\dots,G_s)$.
  By the Implicit Function Theorem (e.g. see~\cite[II.III.10]{Serre64}),
  after relabeling if necessary, there exists $\epsilon_i >0$ in $|\field^\times|$
  such that  $B_\epsilon = T_n[(\epsilon_i)]/J'$  is isomorphic to $T_1[\epsilon_1]$. Thus,
    $T_k \to A \to B \to B_\epsilon \to T_1[\epsilon_1]$ gives a nonconstant analytic map $\lambda \mapsto f_\lambda$
  from $\{ \lambda  : |\lambda | \le \epsilon_1 \}$ into $\ol{\D}^k$
  such that $f_0 = f$ and $f_\lambda \in V$ for all $\lambda$
  (i.e. the escaping critical points $c_1(\lambda), \dots, c_j(\lambda)$ of $f_\lambda$ have constant B\"ottcher coordinates).
\end{proof}

Now we can prove Theorem \ref{ithr:A} and Corollary \ref{icor:B}.
\begin{proof}[Proof of Theorem \ref{ithr:A}]

{Since we have already established Corollary \ref{coro:oneA},}
now assume that $f \in \Poly_d$ is a tame polynomial  such that
$\cJ(f) \subset \aone$ and $\crit(f) \cap J(f) \neq \emptyset$.
To prove that $f$ is in the closure of the tame shift locus,
it will be sufficient to show that there exists an arbitrarily close
tame polynomial $g \in \Poly_d$ such that $\cJ(g) \subset \aone$ and
$\# \crit(g) \cap J(g) < \# \crit(f) \cap J(f)$.

By Lemma \ref{l:analytic} we can consider a nonconstant one-dimensional critically marked analytic family $\{f_\lambda\}_{\lambda\in\Lambda}$ in $\Poly_d$
parametrized
by an open disk with $0\in\Lambda \subset \field$ such that $f_0=f$, 
the base point $x_\Lambda$ is of $f_\lambda$ is independent of $\lambda\in\Lambda$ and the B\"ottcher coordinates of escaping critical points
of $f_0$ are constant.
Since $\cJ(f_0)\subset\aone$, we conclude that at least one critical point of $f_\lambda$ is active in $\Lambda$; for otherwise, by Theorem \ref{ithr:C}, the map $f_\lambda$ would be  affine conjugate to $f_0$ for all $\lambda\in\Lambda$, which is a contradiction since affine conjugacy classes in $\Poly_d$ are finite.
By Corollary \ref{coro:passive}, there exists $\lambda' \in \Lambda$ such that
all the critical points of $f_{\lambda'}$ are in $\cB(f_{\lambda'})$ or in $\cJ(f_{\lambda'})$ and the number of Julia critical points of  $f_{\lambda'}$
is strictly smaller than the number of Julia critical points of  $f_{0}$.
It follows that $g=f_{\lambda'}$ is such that $\cJ(g) \subset \aone$ and
$\#\left(\crit(g) \cap\cJ(g)\right) < \#\left(\crit(f) \cap\cJ(f)\right)$.
\end{proof}

\begin{proof}[Proof of Corollary \ref{icor:B}]
Suppose on the contrary that $c_i$ is passive. Corollary \ref{coro:passive} implies that the critical point $c_i(g)$ is contained in $\cJ(g)$ for all $g$ in a sufficiently small neighborhood of $f$, which contradicts Theorem \ref{ithr:A}.
\end{proof}

\appendix
\section{A local lemma}
\label{appendix}
For the sake of completeness we present here the proof of a result which easily follows along the lines of Hensel's Lemma.

\begin{lemma} \label{lem:lift}
Consider two polynomials  $f,g\in\field[z]$  and
suppose that 
\begin{enumerate}
\item $f(x_G)=x_G=g(x_G)$,
\item $T_{x_G}f=T_{x_G}g$, and
\item $f'(x_G)=x_G$. 
\end{enumerate}
Then there exist  an affinoid $V$ containing $x_G$ and an analytic function $h: V\to h(V)$ such that
\begin{enumerate}[label=(\alph*)]
\item $f\circ h=g$ in $V$, and
\item $h(x_G)=x_G$ and $T_{x_G}h= \operatorname{id}$.
\end{enumerate}
\end{lemma}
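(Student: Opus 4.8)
The plan is to solve the functional equation $f\circ h=g$ in an affinoid neighbourhood of $x_G$ by a Newton (Hensel) iteration started at $h_0=\operatorname{id}$; the hypotheses (1)--(3) are precisely what makes this run. First I would extract their quantitative content at the Gauss point. By (1), $\|f\|_{x_G}=\|g\|_{x_G}=1$, so the reductions $\tilde f,\tilde g$ are defined (nonconstant) polynomials over the residue field and the tangent maps $T_{x_G}f$ and $T_{x_G}g$ equal them; hence (2) is equivalent to $\tilde f=\tilde g$, i.e.\ to $\eta_0:=\|f-g\|_{x_G}<1$. By (3), $|f'|(x_G)=\|f'\|_{x_G}=1$.

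Next I would build $V$. Since $\field$ is algebraically closed, $|\field^\times|$ is dense in $\R_{>0}$; fix $\tau\in|\field^\times|$ with $\sqrt{\eta_0}<\tau<1$ (taking $\tau$ close to $1$ leaves ample room below). The set $\{x:|f'|(x)<\tau\}$ is a finite union of Berkovich disks, each meeting the zero set of $f'$ and none containing $x_G$ (as $|f'|(x_G)=1\ge\tau$). Slightly enlarging these disks (keeping them bounded, away from $x_G$, with radii in $|\field^\times|$) and removing their union from a closed disk $\overline{D}(0,R)$ with $R\in|\field^\times|$, $R>1$, yields an affinoid $V$ which is a neighbourhood of $x_G$ and on which $f'$ is a unit with $\|1/f'\|_V\le 1/\tau$. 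Because $\|f-g\|_{x_{0,R}}$ and the $\|f^{(k)}/k!\|_{x_{0,R}}$ decrease to $\eta_0<1$ and to $\|f^{(k)}/k!\|_{x_G}\le 1$ as $R\downarrow 1$, I may shrink $R$ towards $1$ until $\big(\max_{k\ge 1}\|f^{(k)}/k!\|_V\big)\cdot\|f-g\|_V<\tau^2$; this is the single inequality the estimates below need.

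Then I would iterate $h_{n+1}:=h_n-(f\circ h_n-g)/(f'\circ h_n)$ in the Banach algebra $\cA(V)$, writing $\delta_n:=f\circ h_n-g$. By induction: each $h_n$ is a well-defined analytic map on $V$ with $\|h_n-\operatorname{id}\|_V\le\|\delta_0\|_V/\tau<\tau$ --- hence, by the ultrametric inequality applied to the Taylor expansion of $f'$ about $\operatorname{id}$, $|f'\circ h_n|\ge\tau$ on $V$, so the division is legitimate; moreover $\|h_{n+1}-h_n\|_V\le\|\delta_n\|_V/\tau$, and, Taylor expanding $f$ about $h_n$ and using $\|f^{(k)}/k!\|_V$ close to $\le 1$, one gets the quadratic estimate $\|\delta_{n+1}\|_V\le\kappa\,\|\delta_n\|_V^2$ with $\kappa\|\delta_0\|_V<1$ by our choice of $R$. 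Thus $\|\delta_n\|_V\to 0$, $\{h_n\}$ is Cauchy in $\cA(V)$, and the limit $h$ is analytic with $f\circ h=g$ on $V$, giving (a). Finally $\|h-\operatorname{id}\|_V<\tau<1$ yields $\|h-\operatorname{id}\|_{x_G}<1=\diam(x_G)$, so, arguing as in the proof of Lemma~\ref{l:constant-ball}, $h(x_G)=x_G$ and $T_{x_G}h=T_{x_G}\operatorname{id}=\operatorname{id}$, giving (b); if one also wants $h$ to be an isomorphism onto $h(V)$, shrink $V$ once more, which is harmless since $T_{x_G}h=\operatorname{id}$ has degree $1$ and so $h$ is injective near $x_G$.

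I do not expect a genuine obstacle: this is the routine Hensel/Newton argument, and the only point needing care is the construction of $V$ above, where one must reconcile three requirements --- that $V$ be a neighbourhood of $x_G$ (so it must poke slightly past $\{|z|\le 1\}$), that $|f'|$ be bounded below on $V$ by a $\tau$ with $\tau^2$ above $\eta_0$, and that the sup norms of $f-g$ and of the $f^{(k)}/k!$ over $V$ remain close to their Gauss-point values. All three are simultaneously achievable because $|f'|(x_G)=1$ (so $\tau$ may be taken near $1$, in particular above $\sqrt{\eta_0}$) and $|\field^\times|$ is dense in $\R_{>0}$.
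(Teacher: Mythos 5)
Your proposal is correct and takes essentially the same route as the paper: a Newton--Hensel iteration started at the identity, run in an affinoid neighbourhood of $x_G$ on which $|f'|$ is bounded below, with a quadratic estimate $\|\delta_{n+1}\|\le\kappa\|\delta_n\|^2$ forcing uniform convergence to the desired $h$. The differences are cosmetic: the paper iterates pointwise $z_{n+1}(x)=z_n(x)-\bigl(f(z_n(x))-g(x)\bigr)/f'(z_n(x))$ and controls error terms via the degree-$d$ polynomials $a_j,f_j$ with coefficients in $\mathfrak{O}$, bounding their sup norms over $V\subset D(0,r)$ by $r^d$; you iterate in the Banach algebra $\cA(V)$ and use $\max_k\|f^{(k)}/k!\|_V$ in place of $r^d$. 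Your opening observation that hypothesis (2) amounts to $\eta_0:=\|f-g\|_{x_G}<1$ is a useful explicit unpacking of the same input the paper uses. One spot worth spelling out a little further if you write this up: in the quadratic estimate you need a bound on $|f^{(k)}(h_n(x))/k!|$ for $x\in V$, but $h_n(x)$ need not lie in $V$. This is harmless because $\|h_n-\operatorname{id}\|_V<\tau<1<R$ keeps $h_n(x)\in\overline{D}(0,R)$, and $\|f^{(k)}/k!\|_{\overline{D}(0,R)}=\|f^{(k)}/k!\|_{x_{0,R}}=\|f^{(k)}/k!\|_V$ since $x_{0,R}\in V$; likewise the lower bound $|f'(h_n(x))|\ge\tau$ follows from the ultrametric inequality applied to the Taylor expansion of $f'$ centred at $x$ rather than any containment $h_n(V)\subset V$. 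The paper handles the same point by requiring $\diam(x)>\mu$ for all $x\in\partial V$ so that $z+w\in V$ whenever $z\in V\cap\aone$ and $|w|\le\mu$. Either device works.
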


\begin{proof}
   Denote by $d \ge 2$ the degree of $f$. 
  For $\rho >1$ sufficiently close to $1$, there exists $0 < s <1$ such that
  $|f(z) - g(z)| \le s$ for all $z \in \aone$ with $|z| \le \rho$.
  Choose $s < \mu < 1$ and consider $1 < r < \rho$ such that:
  $$\mu r^d < 1 \text{ and } sr^d < \mu.$$
  Let $V$ be a closed (rational) affinoid contained in $D(0,r)$ and containing $x_G$ in its interior such that  $\diam(x) > \mu$ for all $x \in \partial V$ and the following hold for all
  $z \in V \cap \aone$:
  $$\mu r^d < |f'(z)| \text{ and } sr^d < \mu \cdot |f'(z)|^2.$$

  Observe that if $z \in V \cap \aone$ and $w \in \aone$ is such that $|w| \le \mu$ then $z+w \in V$ and, moreover,
  $$|f'(z +w)| =  |f'(z)|;$$
  indeed, for some polynomials $a_j(z)$ of degree at most $d$ with coefficients in the closed unit ball $\mathfrak{O} \subset \field$,
  $$|f'(z+w) -f'(z)| = |\sum_{j \ge 1} a_j(z) w^j| \le r^d |w| < |f'(z)|.$$

  Given $x \in V \cap \aone$, for $n\ge 0$, let
  \begin{eqnarray*}
    z_0 (x) &= & x,\\
                 w_n(x)  &=& - \dfrac{f(z_n(x))-g(x)}{f'(z_n(x))},\\
                 z_{n+1}(x) & = & z_n(x) + w_n(x).
  \end{eqnarray*}
  Note that $|w_0(x)| \le s/|f'(x)| < s \sqrt{\mu/sr^d} < \mu$.
 We claim that for all $n \ge 0$, 
  \begin{eqnarray*}
    |f(z_{n+1}(x))-g(x)| & < & \mu |f(z_{n}(x))-g(x)|,\\
    |w_{n+1}(x)| &<& \mu |w_n(x)|.
  \end{eqnarray*}
Let us proceed by induction. We will omit the case $n=0$ for the first inequality since the inductive step is very similar, so consider $n \ge 1$ and
  assume that the inequalities hold for all $k < n$. Observe that
  $$f(z_n(x) + w_n(x))- g(x) = \sum_{j\ge 2} f_j(z_n(x)) w_n(x)^j$$
  for some $f_j(z) \in \mathfrak{O}[z]$ of degrees at most $d$.
  Hence
  \begin{eqnarray*}
    |f(z_{n+1}(x)) - g(x)| &\le& r^d |w_n(x)|^2 = r^d \dfrac{|f(z_n(x))-g(x)|^2}{
                                 |f'(z_n(x))|^2}\\
                           & \le & r^d |f(z_n(x))-g(x)| \dfrac{s}{|f'(z_n(x))|^2}\\
    & < & \mu |f(z_n(x))-g(x)|,
  \end{eqnarray*}
  where the last inequality follows from $|f'(x)| = |f'(z_n(x))|$ since
  $|w_k(x)| < \mu$ for all $k < n$. 
Similarly, {noting that  $|w_n(x)| < \mu$ and hence $|f'(z_{n+1}(x))|= |f'(z_n(x))|$, we have}
\begin{eqnarray*}
  |w_{n+1}(x)| &\le& \dfrac{r^d |w_n(x)|^2}{|f'(z_n(x))|} \\
  & = &  |w_n(x)| \dfrac{r^d |f(z_n(x))-g(x)| }{|f'(z_n(x))|^2}\\
&\le& |w_n(x)| \dfrac{r^d s}{|f'(z_n(x))|^2} < \mu |w_n(x)|  .
\end{eqnarray*}

To finish the proof observe that $w_n(x)$ converges to $0$ uniformly in $V$  as $n\to \infty$, and hence $z_n(x)$ converges to an analytic function $z(x)$ in $V$. Moreover,
$$f(z(x))-g(x)=\lim_{n\to\infty}f(z_n(x))-g(x)=0.$$
Also  $|z(x)-x|<\mu$ since $|z_n(x)-x|<\mu$ for all $n$.
The conclusion of the lemma follows immediately by letting $h(x)=z(x)$.

\end{proof}

\bibliographystyle{alpha}
\bibliography{references}

\end{document}